\documentclass[a4paper,12pt]{article}
\usepackage{amsfonts,amsmath,amssymb,amsthm}
\usepackage{txfonts}
\usepackage[utf8]{inputenc}

\usepackage{amsmath}
\usepackage{amssymb}
\usepackage{mathtools}
\usepackage{bbm}

\RequirePackage[colorlinks,citecolor=blue,urlcolor=red, linkcolor=blue]{hyperref}
\usepackage[margin=0.8in]{geometry}

\usepackage[dvips]{graphics}
\usepackage{epsfig,rotating}
\usepackage{tabularx}
\usepackage{bm}
\usepackage{bbm}
\usepackage{array}
\newcolumntype{P}[1]{>{\raggedright\arraybackslash}p{#1}}

\usepackage{mdframed}
\usepackage{lipsum}
\usepackage{enumitem}

\usepackage[authoryear,square]{natbib}

\numberwithin{equation}{section}

\usepackage[nottoc,numbib]{tocbibind} 
\usepackage[toc,page]{appendix}

\newtheorem{thm}{Theorem}
\newtheorem{lemma}{Lemma}

\newtheorem{prop}{Proposition}
\newtheorem{co}{Corollary}

\DeclareMathOperator{\Var}{Var}

\newcommand{\beaa}{\begin{eqnarray*}}
\newcommand{\eeaa}{\end{eqnarray*}}
\newcommand{\bea}{\begin{eqnarray}}
\newcommand{\eea}{\end{eqnarray}}

\newcommand{\la}{\left\{}
\newcommand{\ra}{\right\}}
\newcommand{\lb}{\left(}
\newcommand{\rb}{\right)}
\newcommand{\mb}{\mathbb}
\newcommand{\ve}{\varepsilon}
\newcommand{\mc}{\mathcal}

\newcommand{\R}{\mathbb R}
\newcommand{\Pp}{\mathbb P}

\newcommand{\op}{\mathrm{op}}

\newcommand{\lmax}{\lambda_{\max}}

\newcommand{\mfQ}{\mathsf{Q}}

\renewcommand{\tabularxcolumn}[1]{%
    >{\centering\arraybackslash}m{#1}%
}

\usepackage{tikz}
\usetikzlibrary{arrows.meta}

\begin{document}
\title{Extreme principal minors of Wishart and deformed GOE matrices}
\author{ 
Zhaorui Dong \\ 
\small{School of Data Science, The Chinese University of Hong Kong, Shenzhen} \\ \small{\href{mailto: zhaoruidong@link.cuhk.edu.cn}{zhaoruidong@link.cuhk.edu.cn}} 
\and 
Tiefeng Jiang \\ \small{School of Data Science, The Chinese University of Hong Kong, Shenzhen} \\ \small{\href{mailto:jiang040@cuhk.edu.cn}{jiang040@cuhk.edu.cn}}
\and 
Tuan Pham \\ \small{Department of Statistics and Data Science, University of Texas, Austin} \\  \small{\href{mailto:tuan.pham@utexas.edu}{tuan.pham@utexas.edu} }
\and
Jianfeng Yao \\ 
\small{School of Data Science, The Chinese University of Hong Kong, Shenzhen} \\ \small{\href{mailto: jeffyao@cuhk.edu.cn}{jeffyao@cuhk.edu.cn} 
}}
\date{}

\maketitle

\begin{abstract}

We study the laws of large
numbers for the largest eigenvalues among all principal minors of
Wishart matrices and deformed GOE matrices. We propose a new method
based on identifying the deterministic sets to which the random sets
formed by suitably normalized principal minors converge in Hausdorff
distance, thereby reducing the original extreme-value problems to
finite-dimensional convex optimization problems.

We demonstrate the effectiveness of this method in regimes not covered
by the existing second-moment arguments in
\cite{cai2021asymptotic,hu2023extreme}. For deformed GOE matrices with
fixed minor size \(k\), we determine the limit for every diagonal
variance \(a>0\) and identify a phase transition at \(a=2\). Above the
transition, the limiting constant satisfies an explicit recursion with no close-form expression, and
the optimizers exhibit a nested hierarchical structure, thereby
resolving the case left open in \cite{cai2021asymptotic}. For Wishart
matrices with general sub-Gaussian entries and fixed \(k\), we
characterize the limit through an entropy-constrained deterministic
convex set. When the entries are standard Gaussian, we solve the
resulting optimization problem explicitly and obtain the exact value
of the limiting constant.

\end{abstract}






\tableofcontents

\section{Introduction}

A substantial part of the random matrix theory literature studies the
eigenvalues and eigenvectors of a single large random matrix; see, for
example,
\cite{anderson2010introduction,BaiSilverstein2010,Bai1999,
BrycDemboJiang2006,DiaconisEvans2001}
and the references therein. In many high-dimensional problems, however,
the relevant quantities are not determined by the spectrum of the full
matrix, but by spectral functionals of a combinatorial family of its
principal submatrices. In compressed sensing
\cite{candes2008restricted}, sparse principal component analysis
\cite{moghaddam2005spectral,d2004direct}, and high-dimensional linear
regression under sparsity constraints \cite{zhang2008sparsity}, either
the optimization problem itself or the assumptions guaranteeing its
statistical performance can be expressed in terms of the extreme
eigenvalues of many principal submatrices. We describe these connections
in more detail below.

\medskip
\noindent
\underline{\it Compressed sensing.}
Suppose that $\bm\Phi\in\mb R^{m\times n}$ is a sensing matrix. The
restricted isometry constant of order $s$ is the smallest
$\delta_s\geq0$ such that
\[
    (1-\delta_s)\|\bm x\|_2^2
    \leq
    \|\bm\Phi\bm x\|_2^2
    \leq
    (1+\delta_s)\|\bm x\|_2^2
\]
for every $s$-sparse vector $\bm x\in\mb R^n$.
The restricted isometry property is used to obtain uniform recovery
guarantees for sparse signals; see, for example,
\cite{candes2008restricted}.

For $S\subset[n]$, let
$\bm\Phi_S\in\mb R^{m\times |S|}$ denote the submatrix formed by the
columns indexed by $S$. The preceding inequalities hold for every
$s$-sparse vector if and only if
\[
    \min_{\substack{S\subset[n]\\1\leq |S|\leq s}}
    \lambda_{\min}
    \left(
        \bm\Phi_S^\top\bm\Phi_S
    \right)
    \geq
    1-\delta_s
    \qquad \text{and}
    \qquad
     \max_{\substack{S\subset[n]\\1\leq |S|\leq s}}
    \lambda_{\max}
    \left(
        \bm\Phi_S^\top\bm\Phi_S
    \right)
    \leq
    1+\delta_s.
\]
Since
$
    \bm\Phi_S^\top\bm\Phi_S
    =
    \left(
        \bm\Phi^\top\bm\Phi
    \right)_S,
$
the restricted isometry property is precisely a simultaneous spectral
condition on all principal submatrices of the Gram matrix
$\bm\Phi^\top\bm\Phi$ of order at most $s$. Random sensing matrices are
a standard and convenient way to obtain such uniform spectral control
with high probability; see
\cite{JiangCai12,cai2021asymptotic}.

\medskip
\noindent
\underline{\it Sparse principal component analysis.}
Given a covariance matrix
$\bm\Sigma\in\mb R^{p\times p}$, the sparse principal component
analysis problem seeks to solve
\[
    \max_{\substack{
        \|\bm x\|_2=1\\
        \|\bm x\|_0\leq s
    }}
    \bm x^\top\bm\Sigma\bm x.
\]
For a fixed support $S$, maximizing over unit vectors supported on
$S$ gives the largest eigenvalue of the corresponding principal
submatrix. Consequently,
\[
    \max_{\substack{
        \|\bm x\|_2=1\\
        \|\bm x\|_0\leq s
    }}
    \bm x^\top\bm\Sigma\bm x
    =
    \max_{\substack{
        S\subset[p]\\
        1\leq|S|\leq s
    }}
    \lambda_{\max}(\bm\Sigma_S);
\]
see, for example, \cite{moghaddam2005spectral,d2004direct} and the references therein. Thus, the sparse PCA
objective is exactly the largest eigenvalue among a collection of principal submatrices. In applications, the unknown population covariance matrix is usually replaced by the sample covariance matrix or by a regularized version of it. The corresponding statistical problem therefore requires understanding the extreme
eigenvalues of many strongly dependent random principal submatrices.

\medskip
\noindent
\underline{\it High-dimensional linear regression.}
Consider the linear model
\[
    \bm y
    =
    \bm X\bm\theta+\bm\varepsilon,
\]
where
$\bm X\in\mb R^{N\times p}$ is the design matrix and the regression
vector $\bm\theta\in\mb R^p$ is assumed to be sparse. Let
\[
    \widehat{\bm\Sigma}
    :=
    \frac1N\bm X^\top\bm X
\]
be the empirical Gram matrix. Its lower and upper sparse eigenvalues
of order $s$ are
\begin{align*}
    \phi_{\min}(s)
    &:=
    \min_{\substack{
        \|\bm v\|_2=1\\
        \|\bm v\|_0\leq s
    }}
    \bm v^\top
    \widehat{\bm\Sigma}
    \bm v
    =
    \min_{\substack{
        S\subset[p]\\
        1\leq|S|\leq s
    }}
    \lambda_{\min}
    \left(
        \widehat{\bm\Sigma}_S
    \right),\\
    \phi_{\max}(s)
    &:=
    \max_{\substack{
        \|\bm v\|_2=1\\
        \|\bm v\|_0\leq s
    }}
    \bm v^\top
    \widehat{\bm\Sigma}
    \bm v
    =
    \max_{\substack{
        S\subset[p]\\
        1\leq|S|\leq s
    }}
    \lambda_{\max}
    \left(
        \widehat{\bm\Sigma}_S
    \right),
\end{align*}
where
\[
    \widehat{\bm\Sigma}_S
    =
    \frac1N\bm X_S^\top\bm X_S.
\]
The sparse Riesz condition requires constants
$0<c_\star\leq c^\star<\infty$ such that
\[
    c_\star
    \leq
    \lambda_{\min}
    \left(
        \frac1N\bm X_S^\top\bm X_S
    \right)
    \leq
    \lambda_{\max}
    \left(
        \frac1N\bm X_S^\top\bm X_S
    \right)
    \leq
    c^\star
\]
uniformly over all subsets $S$ up to a prescribed size
\citep{zhang2008sparsity}. Conditions of this type prevent nearly
singular behavior on sparse directions and play a central role in
estimation-error, sparsity, and variable-selection guarantees for
the Lasso and related procedures. When the design matrix is random,
these guarantees depend on the simultaneous spectral behavior of a
large, strongly dependent family of random principal submatrices.

These examples motivate the study of extreme spectral functionals over
all principal minors, rather than the spectrum of a single random matrix.
Despite their broad relevance, however, asymptotic results for spectral
extrema over a family of principal minors remain relatively
limited, especially when compared with the extensive theory available for
the spectrum of a single random matrix. A principal difficulty is that one
must optimize simultaneously over a large collection of submatrices, which can be highly dependent due to the overlapping structure of the index sets.

A systematic asymptotic study of this problem was initiated in \cite{cai2021asymptotic} for white Wishart matrices and Gaussian Wigner matrices. They established laws of large numbers for the extreme
eigenvalues of fixed or slowly growing principal minors. For a Gaussian Wigner matrix with off-diagonal variance one and diagonal variance $a$, their first-order theory covered the regime $0\leq a\leq2$,
including the classical GOE case $a=2$. Since then, only a small number of
works have further developed the theory. \cite{hu2023extreme} extended parts of the Wishart results to non-Gaussian entries under certain moment assumptions. For fixed-order principal minors of the
classical GOE, \cite{feng2026principal}
established Gumbel fluctuations and studied the eigenvector associated
with the maximizing minor. More recently, \cite{jiang2024largest} analyzed the case $k=2$ for deformed GOE and
Wishart matrices and showed that the limiting distribution may change
qualitatively when the diagonal variance or fourth moment crosses a
critical threshold.

The goal of the present article is to propose a new technique for
deriving first-order limits for problems of this type. To give a
snapshot of the framework, let us briefly describe the setup. Given a
random matrix $\bm X\in\mb R^{p\times n}$, define
\begin{equation*}
    T_{n,k}
    :=
    \max_{\substack{
        S\subset[n]\\
        |S|=k
    }}
    \lambda_{\max}
    \left(
        \bm X_{[S]}^\top\bm X_{[S]}
    \right),
\end{equation*}
which is the largest squared singular value among all $p\times k$
column submatrices of $\bm X$. Equivalently, if supports
of size at most $k$ are allowed, it is the maximum of $\|\bm X\bm x\|_2^2$ over
all unit $k$-sparse vectors.

A closely related setting is that of Wigner matrices: given a random
symmetric matrix $\bm A\in\mb R^{n\times n}$, consider
\begin{equation*}
    M_{n,k}
    :=
    \max_{\substack{
        \|\bm x\|_2=1\\
        \|\bm x\|_0\leq k
    }}
    \bm x^\top\bm A\bm x
    =
    \max_{\substack{
        S\subset[n]\\
        |S|\leq k
    }}
    \lambda_{\max}(\bm A_S),
\end{equation*}
where $\bm A_S$ denotes the principal minor indexed by $S$.

We are interested in the first-order limits of $T_{n,k}$ and
$M_{n,k}$: namely, finding sequences of normalizing constants
$\{b_n:n\geq1\}$ and $\{\widetilde b_n:n\geq1\}$ that may depend on
the data distribution such that
\[
    \frac{T_{n,k}}{b_n}
    \stackrel{\mb P}{\to}
    1
    \qquad\text{and}\qquad
    \frac{M_{n,k}}{\widetilde b_n}
    \stackrel{\mb P}{\to}
    1.
\]
The above problem can be quite challenging because the scaling is
non-universal: it depends on the underlying distributions of
$\bm X$ and $\bm A$, as well as on the value of $k$ (especially when
$k$ is fixed), and can be difficult to characterize exactly when the
data distribution is non-Gaussian. This difficulty helps explain why
the existing works \cite{cai2021asymptotic,feng2026principal} focus on
the Gaussian setting. Moreover, it was noted in \cite{cai2021asymptotic} that,
even in the Gaussian Wigner model with off-diagonal variance one and
diagonal variance $a>0$, the authors could not obtain a matching lower
bound for the first-order limit when $a>2$.

We note that non-Gaussian extensions were obtained in
\cite{hu2023extreme}; however, the authors did not investigate the
critical regime considered here. After translating their assumptions
into our notation, their results correspond to the regime
$p/\log n\to\infty$ in Section~\ref{sec:wishart}, in which the
first-order limit is universal. In contrast, our results cover the
full range
\[
    \frac{p}{\log n}\to\beta\in[0,\infty],
\]
including both endpoint regimes. When \(\beta<\infty\), the limiting
behavior is generally non-universal and depends on the entry
distribution. For \(\beta\in(0,\infty)\), we characterize the limit
through an entropy-constrained deterministic convex set and obtain an
explicit expression when the entries are standard Gaussian.

To deal with these difficulties, we propose a technique that can both
predict and rigorously prove the first-order limits of these
quantities when $k$ is fixed. The technique transforms the problem
into finding the high-probability convex closure of a random set in
the space of symmetric matrices. Such an approach is especially useful
when $k$ is kept fixed. The first-order limit can then be obtained by
solving a convex optimization problem, which admits several interesting
explicit formulae when the densities are sufficiently regular.

A complete summary of the first-order limits is provided in
Tables~\ref{tab:Wigner-first-order-limits}
and~\ref{tab:Wishart-first-order-limits}. Our main contributions can be summarized as follows.
\begin{itemize}
    \item We introduce a technique for deriving first-order
    limits of extreme spectral functionals over a family
    of principal minors. The idea is to study the random sets formed by the normalized principal minors and identify the deterministic set to which they converge in Hausdorff distance. One then uses the continuous mapping theorem and solves a convex optimization problem to deduce the first-order limit.

    \item We explain the framework by first focusing on Gaussian Wigner matrices. Assuming that the off-diagonal variance is one and
    the diagonal variance is $a>0$, we determine the first-order limit of $M_{n,k}$ for every fixed $k$ and every $a>0$; see Section \ref{sec:Wigner-fixed-k} below. In particular, we
    resolve the open case $a>2$ left by \cite{cai2021asymptotic}. We show
    that the limiting constant exhibits a phase transition at $a=2$:
    \[
        \gamma_k(a)
        =
        \sqrt{a+2(k-1)}
        \qquad
        \text{when }~~0<a\leq2,
    \]
    whereas, when $a>2$, it is determined by the recursion
    \[
        \gamma_k(a)
        =
        \gamma_{k-1}(a)
        +
        \frac{2}{
            \gamma_{k-1}(a)
            +
            \sqrt{\gamma_{k-1}(a)^2+4-a}
        }, \quad k\ge 2, 
    \]
    with $\gamma_1(a)=\sqrt{a} $. 
    Our proof reveals an interesting phenomenon regarding the structure of the extremal principal minors when $a>2$: the deterministic extremal minors form a nested chain as $k$ increases, whereas this phenomenon does not occur when $a\leq 2$; see Proposition \ref{prop:complete-tight-chain} for the precise statement. Results for the regime in which $k$ grows proportionally are also provided (Theorems \ref{thm:c=0} and \ref{c>0}).


    \item For Wishart matrices, we treat a general sub-Gaussian entry
    distribution $\mu$ in the regime
    \[
        \frac{p}{\log n}\to\beta\in[0,\infty].
    \]
    For $\beta \in (0,\infty)$, we prove that the random set of normalized sub-Gram matrices
    converges in Hausdorff distance to a deterministic convex set
    $\Gamma_{k,\beta,\mu}$ that can be expressed by certain constraints involving the Kullback–Leibler (KL) divergence; see Theorem \ref{thm:Wishart-limit} for the precise statement.
    As a direct consequence, we derive the first-order limit by solving the optimization problem explicitly for the normal distribution: the limit involves
    the unique $\lambda>1$ satisfying
    \[
        \lambda-\log\lambda-1
        =
        \frac{2k}{\beta}.
    \]
    We also characterize the two endpoint regimes $\beta=0$ and $\beta=\infty$. When
    $p/\log n\to\infty$, the first-order limit is universal, which agrees with \cite{cai2021asymptotic}, 
    and when $p/\log n\to0$, the limit is governed by the
    tail of $\mu$.
\end{itemize}

The rest of the paper is organized as follows. The main results for
Wigner and Wishart matrices are presented in Sections~\ref{sec:Wigner}
and~\ref{sec:wishart}, respectively. An outline of the proof technique
and an application to compressed sensing are given in
Section~\ref{sec:app}. Further discussion and remarks are provided in
Section~\ref{sec:discussion}. The remainder of the paper is devoted to
the proofs and other technical results.

\subsection*{Notation} Throughout the paper, the notation $A_n \lesssim B_n$ indicates that $A_n \leq C B_n$ for some universal constant $C>0$. We also write $A_n \lesssim_{a,b} B_n$ to indicate that the constant $C$ may depend on $a$ and $b$ but is independent of $n$. For two matrices $\bm A$ and $\bm B$, the notation $\bm A \succeq \bm B$ is understood in the positive-semidefinite sense. We write $[n]$ to denote $\la 1,2,\dots,n \ra$. Unless stated otherwise, the distance $d$ used throughout the paper is the metric induced by the Frobenius norm.

\begin{table}[t]
\centering
\footnotesize
\setlength{\tabcolsep}{4pt}
\renewcommand{\arraystretch}{1.50}
\setlength{\extrarowheight}{2pt}
\caption{Gaussian Wigner matrices with off-diagonal variance \(1\)
and diagonal variance \(a>0\).}
\label{tab:Wigner-first-order-limits}

\renewcommand{\tabularxcolumn}[1]{%
    >{\centering\arraybackslash}m{#1}%
}

\begin{tabularx}{\textwidth}{|c|X|}
\hline
\textsf{Regime}
&
\textsf{First-order limit}
\\
\hline

\rule[-1.20em]{0pt}{3.7em}%
Fixed \(k\), \(0<a\leq2\)
&
\(\displaystyle
    \frac{M_{n,k}}{\sqrt{2\log n}}
    \stackrel{\mb P}{\to}
    \sqrt{a+2(k-1)}
\).
\\
\hline

\rule[-1.20em]{0pt}{3.7em}%
Fixed \(k\), \(a>2\)
&
\(\displaystyle
    \frac{M_{n,k}}{\sqrt{2\log n}}
    \stackrel{\mb P}{\to}
    \gamma_k(a)
\),
where 
\[
\gamma_1(a)=\sqrt a, \qquad 
    \gamma_m(a)
    =
    \gamma_{m-1}(a)
    +
    \frac{2}{
        \gamma_{m-1}(a)
        +
        \sqrt{\gamma_{m-1}(a)^2+4-a}
    }.
\]
\\
\hline

\rule[-1.20em]{0pt}{3.7em}%
\(k\to\infty\), \(k/n\to0\)
&
\(\displaystyle
    \frac{M_{n,k}}
    {2\sqrt{k\log(n/k)}}
    \stackrel{\mb P}{\to}
    1
\).
\\
\hline

\rule[-1.20em]{0pt}{3.7em}%
\(k/n\to c\in(0,1]\)
&
\(\displaystyle
    \frac{M_{n,k}}{\sqrt n}
    \stackrel{\mb P}{\to}
    \mc E(c)
\),
where \(\mc E\) is increasing and concave on \((0,1)\).
\\
\hline

\end{tabularx}
\end{table}

\begin{table}[t]
\centering
\footnotesize
\setlength{\tabcolsep}{4pt}
\renewcommand{\arraystretch}{1.50}
\setlength{\extrarowheight}{2pt}
\caption{Wishart matrices with sub-Gaussian distributions.}
\label{tab:Wishart-first-order-limits}

\begin{tabularx}{\textwidth}{|c|X|}
\hline
\textsf{Regime}
&
\textsf{First-order limit}
\\
\hline

\rule[-1.15em]{0pt}{3.5em}%
\(\displaystyle \frac{p}{\log n}\to\infty\)
&
\(\displaystyle
    \frac{T_{n,k}}{p}
    \stackrel{\mb P}{\to}
    1.
\)
\\
\hline

\rule[-1.15em]{0pt}{3.5em}%
\(\displaystyle
    \frac{p}{\log n}
    \to\beta\in(0,\infty)
\)
&
\(\centering{\displaystyle
    \frac{T_{n,k}}{p}
    \stackrel{\mb P}{\to}
    \max_{\bm Q\in\Gamma_{k,\beta,\mu}}
    \lambda_{\max}(\bm Q)}
\).
\\
\hline
\rule[-1.15em]{0pt}{3.5em}%
\(\displaystyle
    \frac{p}{\log n}
    \to\beta\in(0,\infty), 
    \text{Gaussian data}
\)

&
\(\centering{\displaystyle
    \frac{T_{n,k}}{p}
    \stackrel{\mb P}{\to}
    \lambda_{k,\beta}
    }
\),
where $\lambda_{k,\beta}>1$ solves 
$\lambda - \log \lambda -1 =  (2k)/\beta$.
\\
\hline

\rule[-1.15em]{0pt}{3.5em}%
\(\displaystyle \frac{p}{\log n}\to0\)
&
\(\displaystyle
    \frac{T_{n,k}}{k\log n}
    \stackrel{\mb P}{\to}
    \kappa_\mu^{-1}
\),
where
$
    \kappa_\mu
    :=
    \lim_{x\to\infty}
    \frac{-\log\mb P(\xi^2>x)}{x}
    \in(0,\infty).
$
\\
\hline

\end{tabularx}
\end{table}

\section{Gaussian Wigner matrices} \label{sec:Wigner}

\subsection{Results for fixed $k$} \label{sec:Wigner-fixed-k}

Fix an integer $k\ge 1$ and a constant $\alpha>0$. For every $n\ge k$, let
$\bm A=(A_{ij})_{1\le i,j\le n}$ be a real symmetric random matrix such that
\[
 A_{ij}=A_{ji}\sim N(0,1),\qquad 1\le i<j\le n,
\]
\[
 A_{ii}\sim N(0,\alpha^2),\qquad 1\le i\le n,
\]
and all random variables on and above the diagonal are independent. Define
\[
 M_{n,k}
 :=\max_{\substack{\|\bm x\|_0\le k\\ \|\bm x\|_2=1}} \bm x^\top \bm A \bm x
 =\max_{\substack{S\subset[n]\\ |S|\le k}}\lmax(\bm A_S),
\]
where $A_S$ is the principal minor indexed by $S$. Put
\[
 q_n:=\sqrt{2\log n},\qquad a:=\alpha^2.
\]
To describe the limit, let us define the compact, convex set
\begin{equation}\label{eq:feasible-set}
 \mathcal K_m(a)
 := \left\{\bm B= \bm B^\top\in\R^{m\times m}:
 \frac1a\sum_{i\in U}B_{ii}^2
   +\sum_{\substack{i<j\\ i,j\in U}}B_{ij}^2 \le |U|\text{ for every }\varnothing\ne U\subset[m]
 \right\}.
\end{equation}
Put
\begin{equation}\label{eq:cost}
 \mathcal I_{U,a}(\bm B)
 := \frac1a\sum_{i\in U}B_{ii}^2
   +\sum_{\substack{i<j\\ i,j\in U}}B_{ij}^2.
\end{equation}
Roughly speaking, $\mc K_m$ consists of all symmetric matrices of size $m$ all of whose principal minors satisfy constraints of the form $\mc I_{U,a} \lb \bm B \rb \leq |U|$. In what follows, we simply write $\mc I_U(\bm B)$ when there is no ambiguity about the variance value $a$.

Set
\begin{equation}\label{eq:gamma-def}
 \gamma_m(a):=\max_{\bm B\in\mathcal K_m(a)}\lmax(\bm B).
\end{equation}
Since the set $\mathcal K_m(a)$ is compact, the maximum exists and is finite. When $k$ is fixed, our first-order limit is as follows.

\begin{thm}\label{thm:main}
For every fixed $k\ge1$ and $\alpha>0$,
\begin{equation}\label{eq:LLN}
 \frac{M_{n,k}}{\sqrt{2\log n}}
 \stackrel{\mb P}{\to}
 \gamma_k(\alpha^2).
\end{equation}



\end{thm}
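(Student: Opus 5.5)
We prove two matching bounds, following the Hausdorff-convergence strategy from the introduction. For each ordered $k$-tuple $\bm s=(s_1,\dots,s_k)$ of distinct indices, let $\bm B^{(\bm s)}:=\bm A_{\bm s}/q_n$ be the normalized $k\times k$ minor. Let $\mathcal R_n:=\{\bm B^{(\bm s)}\}$ be the resulting random set. Minors of size $<k$ are principal submatrices of size-$k$ minors, and by Cauchy interlacing $\lambda_{\max}$ is monotone under taking principal submatrices. Hence $M_{n,k}/q_n=\max_{\bm B\in\mathcal R_n}\lambda_{\max}(\bm B)$. Since $\lambda_{\max}$ is $1$-Lipschitz in Frobenius norm, it suffices to prove two facts with probability $\to1$, for every $\varepsilon>0$:
\begin{enumerate}
\item[(i)] every element of $\mathcal R_n$ lies within $\varepsilon$ of $\mathcal K_k(a)$;
\item[(ii)] some element of $\mathcal R_n$ lies within $\varepsilon$ of a fixed maximizer $\bm B^*$ of \eqref{eq:gamma-def}.
\end{enumerate}

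\textbf{Upper bound (i).} Take a finite $\delta$-net $\mathcal N$ of the complement region $\{\bm B:\ d(\bm B,\mathcal K_k(a))\ge\varepsilon,\ \|\bm B\|_F\le C\}$. Matrices with $\|\bm B\|_F>C$ are excluded trivially by a union bound on the $n^2$ entries. Each $\bm B\in\mathcal N$ violates some constraint $\mathcal I_U(\bm B)\ge |U|+\eta$, with $\eta=\eta(\varepsilon)>0$ uniform over the net by compactness.

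The event that $\bm B^{(\bm s)}$ lies in a $\delta$-ball around $\bm B$ only involves the entries indexed by $U$. Those are $|U|$ diagonal Gaussians of variance $a$ and off-diagonal standard Gaussians, all independent. The Gaussian tail gives probability at most $n^{-\mathcal I_U(\bm B)+O(\delta)}$. There are at most $n^{|U|}$ choices of $\bm s|_U$, so the union bound gives $n^{|U|-|U|-\eta+O(\delta)}\to0$. Summing over the finite net and over $U$ proves (i), and therefore $\limsup M_{n,k}/q_n\le\gamma_k(a)$.

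\textbf{Lower bound (ii).} By convexity and scaling, shrink $\bm B^*$ slightly to $(1-\varepsilon)\bm B^*$. This matrix satisfies every constraint strictly: $\mathcal I_U\le(1-\varepsilon)^2|U|$. Let $N$ count the $k$-tuples $\bm s$ with $\bm B^{(\bm s)}$ in a small box around this target. Then $\mathbb E N\approx n^{k-\mathcal I_{[k]}+o(1)}\to\infty$.

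For the second moment, pairs of tuples sharing an index set $V$ (in positions $U$) contribute
\[
\mathbb E N^2/(\mathbb E N)^2-1\lesssim \sum_{U\neq\varnothing} n^{-|U|+\mathcal I_U(\text{target})+o(1)},
\]
because the shared entries are those of the minor indexed by $U$. Some care is needed for position permutations, but the count of overlapping pairs is still governed by $n^{2k-|U|}$. Strict feasibility makes every term $\to0$, so Paley–Zygmund (Chebyshev) gives $N>0$ w.h.p. This yields $\liminf M_{n,k}/q_n\ge(1-\varepsilon)\gamma_k(a)$.

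\textbf{Main obstacle.} The delicate step is the second moment. For $a>2$ the naive analysis of \cite{cai2021asymptotic} fails because it ignores the subset constraints. Our scheme sidesteps this precisely by targeting a strictly feasible point of $\mathcal K_k(a)$: every sub-minor constraint becomes strict, which is exactly the condition needed to kill each overlap term. The remaining technical work is controlling the box width against the exponent errors. Box width $\delta\to0$ slowly suffices, since within the box the density ratio changes by only $n^{O(\delta)}$.
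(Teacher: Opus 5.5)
Your proposal is correct and follows the paper's argument. The upper bound is the same union bound over the $n^{|U|}$ sub-minors against the $\chi^2$ tail of $\mathcal I_U$; the paper avoids your net by bounding $\mathcal I_T(\bm A_T/q_n)\le |T|+\varepsilon$ directly and using $\mathcal K^{\delta}_{k}(a)=\sqrt{1+\delta}\,\mathcal K_k(a)$. The lower bound is the same second-moment count around a strictly feasible shrunken target, with overlap terms $n^{-|U|}/p_n(U)\to0$.

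The remaining differences are economies. You aim only at a single maximizer rather than proving the full Hausdorff convergence of Theorem~\ref{thm:master}, so no uniformity step is needed. You also drop the paper's blocking of $[n]$ into $k$ ranges. That is harmless, because for overlaps in permuted positions independence still gives $\mathbb P(E_{\bm s}\cap E_{\bm t})\le\mathbb P(H_{\bm s})\,\mathbb P(H_{\bm t})\,\mathbb P(G_{\bm s})=p_n^2/\mathbb P(G_{\bm t})$. Here $G_{\bm s},G_{\bm t}$ are the constraints the two tuples place on the shared block, and $H_{\bm s},H_{\bm t}$ are the remaining constraints; this is the step you left as ``some care''.
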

Theorem \ref{thm:main} gives a first-order limit for every value of the variance $\alpha$. The function $\gamma_k(.)$ can be computed explicitly by solving a convex optimization problem. Interestingly, it exhibits a phase transition at $\alpha=\sqrt{2}$: when $\alpha \leq \sqrt{2}$, $\gamma_k(a)$ has a simple explicit expression that matches \cite{cai2021asymptotic}, whereas it is defined recursively when $\alpha>\sqrt{2}$. Moreover, in the latter case, the maximizers have a hierarchical structure: the matrices that achieve the maximum at different values of $k$ form a nested chain.

\begin{thm} \label{thm:combinatorics}
    The following statements hold:
    \begin{enumerate}[label=\textup{(\roman*)}]
\item If $0<a\le2$, then
\begin{equation}\label{eq:subcritical-constant}
 \gamma_k(a)=\sqrt{a+2(k-1)}.
\end{equation}

\item If $a>2$, then $\gamma_1(a)=\sqrt a$ and, recursively for $k\ge2$,
\begin{align} 
 \gamma_k(a)
& =\max_{0\le t\le1}
 \left\{
 t \gamma_{k-1}(a)+\sqrt{(1-t)\bigl(a(1-t)+4t\bigr)}
 \right\} \label{eq:variational-recursion} \\
 &= \gamma_{k-1}(a) +\frac{2}{\gamma_{k-1}(a)+\sqrt{\gamma_{k-1}(a)^2+4-a}}. \label{eq:closed-recursion}
\end{align}
\end{enumerate}
\end{thm}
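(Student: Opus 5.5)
The plan is to compute $\gamma_k(a)$ directly from the variational formula
\[
\gamma_k(a)=\max_{\|\bm x\|_2=1}\ \max_{\bm B\in\mathcal K_k(a)} \bm x^\top\bm B\bm x .
\]
For fixed $\bm x$ the inner problem maximizes a linear functional of $\bm B$ over the intersection of the ellipsoids $\mathcal I_U(\bm B)\le|U|$. Throughout I use the elementary Cauchy--Schwarz bound
\[
\bm x^\top\bm B\bm x=\sum_i x_i^2B_{ii}+2\sum_{i<j}x_ix_jB_{ij}
\le\Big(a\sum_{i\in U}x_i^4+4\sum_{i<j\in U}x_i^2x_j^2\Big)^{1/2}\mathcal I_U(\bm B)^{1/2},
\]
valid for $\bm x$ supported on $U$. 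Equality holds when $B_{ii}\propto a x_i^2$ and $B_{ij}\propto 2x_ix_j$.

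\textbf{Case (i), $a\le 2$.} For the upper bound I use only the constraint with $U=[k]$. Then
\[
a\sum x_i^4+4\sum_{i<j}x_i^2x_j^2=(a-2)\sum x_i^4+2 .
\]
Since $a-2\le 0$ and $\sum x_i^4\ge 1/k$, this is at most $2+(a-2)/k$. Multiplying by $\mathcal I_{[k]}(\bm B)\le k$ gives $\bm x^\top\bm B\bm x\le\sqrt{a+2(k-1)}$. For the lower bound I take $x_i=k^{-1/2}$, $B_{ii}=at$, $B_{ij}=2t$. Then $\mathcal I_U(\bm B)=|U|\,t^2\,(a+2(|U|-1))$, so every constraint holds once $t^2(a+2(k-1))=1$, because the binding subset is $U=[k]$. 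With this choice $\lmax(\bm B)\ge\bm x^\top\bm B\bm x=\sqrt{a+2(k-1)}$.

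\textbf{Case (ii), $a>2$, lower bound in \eqref{eq:variational-recursion}.} I argue by induction. Let $\bm B'\in\mathcal K_{k-1}(a)$ attain $\gamma_{k-1}$, with unit top eigenvector $\bm y$. Border it with a new row $(\bm b,s)$ satisfying $s^2/a+\|\bm b\|^2\le1$. For $U\ni k$ we have $\mathcal I_U(\bm B)=\mathcal I_{U\setminus\{k\}}(\bm B')+s^2/a+\sum_{i\in U\setminus\{k\}}b_i^2\le |U|$, so the bordered matrix lies in $\mathcal K_k(a)$. Testing with $\bm x=(\sqrt t\,\bm y,\sqrt{1-t})$ gives
\[
t\gamma_{k-1}+(1-t)s+2\sqrt{t(1-t)}\,\langle\bm y,\bm b\rangle .
\]
Optimizing $(s,\bm b)$ by Cauchy--Schwarz yields $t\gamma_{k-1}+\sqrt{(1-t)(a(1-t)+4t)}$. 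This construction also produces the nested chain of maximizers.

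The closed form \eqref{eq:closed-recursion} then follows by one-variable calculus: set the $t$-derivative to zero, solve the resulting quadratic, and simplify. I expect the maximizer to be interior exactly when $a>2$, and to reduce the value to $\gamma_{k-1}+2/(\gamma_{k-1}+\sqrt{\gamma_{k-1}^2+4-a})$.

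\textbf{Case (ii), upper bound (main obstacle).} The naive bordering bound fails. Splitting off the coordinate with smallest $|x_k|$ gives the correct linear form, but the budget $s^2/a+\|\bm b\|^2$ is only controlled jointly with $\bm B'$ through $\mathcal I_{[k]}\le k$, not by $1$. I would instead build a dual certificate along a nested chain. Order the coordinates so that $|x_1|\ge\dots\ge|x_k|$ and let $U_m=[m]$. I then seek weights $w_m\ge0$ such that, for all symmetric $\bm B$,
\[
\bm x^\top\bm B\bm x\le \frac{\lambda}{2}\sum_{m}w_m\,\mathcal I_{U_m}(\bm B)+\frac{1}{2\lambda}\,c(\bm x),
\]
and then optimize over $\lambda$ using $\mathcal I_{U_m}\le m$. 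The weights should come from the multipliers of the recursion at the optimal $t$'s, that is, from the tight chain of the lower-bound construction. To verify that the resulting bound is $\le\gamma_k$ for every unit $\bm x$, I would reduce it to an inequality in the $x_i^2$ and prove it by induction on $k$. The inductive step is exactly the one-dimensional maximization in \eqref{eq:variational-recursion}, with $a>2$ used to justify that the chain (rather than $U=[k]$ alone) is the binding family. This weight construction and its verification is the step I expect to be hardest.
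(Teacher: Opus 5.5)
Your case (i) and your lower bound in case (ii) (bordering an optimal $\bm B'\in\mathcal K_{k-1}(a)$ by a multiple of its top eigenvector, with $(\bm b,s)$ chosen by equality in Cauchy--Schwarz) are exactly the paper's arguments. The calculus giving \eqref{eq:closed-recursion} is also fine.

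The gap is the upper bound for $a>2$, which is the real content of (ii). You specify neither the weights $w_m$, nor $c(\bm x)$, nor the inequality to be proved by induction. By duality, restricting multipliers to a chain is the same as discarding every non-chain constraint. So what you would need is $\max\{\bm x^\top\bm B\bm x:\ \mathcal I_{[m]}(\bm B)\le m,\ 1\le m\le k\}\le\gamma_k(a)$ for every unit $\bm x$ with $|x_1|\ge\cdots\ge|x_k|$. Nothing in the proposal supports this:
\begin{itemize}
\item The discarded constraints must be compensated by the ordering of $\bm x$. For example, $U=\{k\}$ caps $B_{kk}$ at $\sqrt a$; without it $B_{kk}$ can reach $\sqrt{ak}$.
\item It is unclear that ordering by $|x_i|$ gives the right chain for a non-optimal $\bm x$.
\item The resulting statement is not evidently easier than the theorem itself.
\end{itemize}
Relatedly, your claim that the construction ``produces the nested chain of maximizers'' presupposes this upper bound. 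Your expectation that the optimal $t$ is interior exactly when $a>2$ is also wrong. For every $a>0$ the optimum is $1-t^\ast=2/\bigl(D(\gamma_{k-1}+D)\bigr)$ with $D=\sqrt{\gamma_{k-1}^2+4-a}\ge 2$. What $a>2$ controls is whether the recursive value equals $\gamma_k$ at all: for $k=2$, $a=1$ it gives $5/3<\sqrt3$.

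The missing idea is in how the paper proceeds. It does not use per-$\bm x$ duality; it analyzes an optimizer $\bm B$ directly and shows that the last row has budget exactly one. That is precisely what you noted the naive bordering lacks. The steps are:
\begin{enumerate}
\item Since $\gamma_k>\gamma_{k-1}$, the top eigenvector has full support.
\item After a diagonal sign conjugation, comparison with $|\bm B|$, and a small perturbation at any zero entry, $\bm B$ becomes entrywise positive.
\item The identity $\mathcal I_{U\cup V}+\mathcal I_{U\cap V}=\mathcal I_U+\mathcal I_V+\sum_{i\in U\setminus V,\,j\in V\setminus U}B_{ij}^2$ then forces tight sets to be nested.
\item Raising $B_{ii}$ for an index lying in no tight set would increase $\lambda_{\max}$, so every index lies in a tight set and $[k]$ is tight.
\item Suppose the largest proper tight set misses two indices $i,j$. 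A Givens rotation in the $(i,j)$-plane preserves the spectrum and every constraint inside that tight set, and keeps the remaining proper constraints strict. It changes
$\mathcal I_{[k]}(\bm B)=\tfrac12\|\bm B\|_F^2+\bigl(\tfrac1a-\tfrac12\bigr)\sum_iB_{ii}^2$
by $\bigl(\tfrac2a-1\bigr)\bigl(B_{ij}^2-(\bm B_\theta)_{ij}^2\bigr)$. This is negative for a suitable small $\theta$, since $B_{ij}>0$ and $a>2$; this is where $a>2$ enters. The result is an optimizer with $[k]$ not tight, contradicting step 4.
\end{enumerate}
Hence some $(k-1)$-set is tight. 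Subtracting its constraint from that of $[k]$ gives $\|\bm g\|_2^2+d^2/a=1$, and your own Cauchy--Schwarz step then closes the upper bound.
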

Consider the case $k=2$. In this case, Theorem \ref{thm:combinatorics} gives
\begin{equation}\label{eq:k2-constant}
 \gamma_2(\alpha^2)
 =\begin{cases}
 \sqrt{\alpha^2+2},&0<\alpha\le\sqrt2,\\[1ex]
 \displaystyle \alpha+\frac{2}{\alpha+2},&\alpha>\sqrt2.
 \end{cases}
\end{equation}
This agrees with \cite{jiang2024largest}, which studies the limiting
distribution of \(M_{n,2}\). For \(a\in(0,2]\), the first-order limit \(\gamma_k(a)\) also agrees with the earlier results of \cite{cai2021asymptotic}. Theorem~\ref{thm:combinatorics} sheds light on
why deriving the limiting distribution of \(M_{n,k}\) is substantially more difficult when \(a>2\) than when \(a\in(0,2]\). Indeed, in the proof of Theorem~\ref{thm:combinatorics}, we also establish several properties of the maximizers. When \(a\in(0,2]\), a maximizer at level \(k\) can be constructed explicitly and independently of the maximizers at levels \(k-1\) and below. In contrast, when \(a>2\), the maximizers at different levels form a nested chain; see Proposition \ref{prop:complete-tight-chain} below for a precise statement. This is consistent with the limiting
distribution derived in \cite{jiang2024largest}, according to which \(M_{n,2}\) may fail to have a Gumbel limit when \(a>2\).

We will not prove Theorem \ref{thm:main} directly, but instead derive it as a consequence of the following abstract first-order limit for random sets in the space of symmetric matrices; see Theorem \ref{thm:master} below. Before presenting this result, we describe the setup.
Suppose $(X,d)$ is a metric space, and let $A, B \subset X$ be two nonempty subsets. The Hausdorff distance $d_{\rm H}(A,B)$ is defined by
\[
d_{\rm H}(A,B):= \max \la \sup_{x \in A} d \lb x, B \rb; \sup_{y \in B} d(A,y) \ra
\]
where
\[
d(x,B):= \inf_{y \in B} d(x,y), \qquad d(A,y):= \inf_{x \in A} d(x,y).
\]
It is easy to check that $d_{\rm H}(A,B)=0$ if and only if $A$ and $B$ have the same closure. 
Our abstract convergence result roughly states that if we scale all entries of $\bm A$ by a factor of order $1/\sqrt{\log n}$, then the random sets formed by the rescaled principal minors converge to a deterministic set in the Hausdorff distance $d_{\rm H}$.

In our context, $X$ is taken to be the space of all symmetric matrices of size $k$:
\[
X:= \la \bm B\in\R^{k \times k}: \bm B= \bm B^\top  \ra.
\]
The metric $d$ is induced by the Frobenius norm. Thus, $\mc K_k(a)$ is a compact, convex subset of $X$.

\begin{thm} \label{thm:master}
Put
\[
\mathcal{C}_{n,k}:= \la \frac{\bm A_S}{q_n}; S \subset [n] \, \text{and} \, |S|=k \ra_{}.
\]
Then
\[
d_{\rm H} \lb \mc C_{n,k}, \mc K_{k}(a) \rb \stackrel{\mb P}{\to} 0.
\]
\end{thm}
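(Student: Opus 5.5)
The proof splits into the two halves of the Hausdorff distance. For fixed $\varepsilon>0$ I would show, with probability tending to one:
\begin{itemize}
\item[(a)] every normalized minor $\bm A_S/q_n$ lies within $\varepsilon$ of $\mc K_k(a)$;
\item[(b)] every point of $\mc K_k(a)$ lies within $\varepsilon$ of some $\bm A_S/q_n$.
\end{itemize}
Both halves rest on one large-deviation computation. Fix an ordered tuple $S=(s_1,\dots,s_k)$ of distinct indices, a subset $U\subset[k]$ and a symmetric $\bm B$. Then
\[
\mb P\bigl(\|(\bm A_S)_U/q_n-\bm B_U\|_F\le\delta\bigr)=n^{-\mc I_U(\bm B)+o_\delta(1)} .
\]
This follows because the Gaussian density of the independent entries of $(\bm A_S)_U/q_n$ is proportional to $\exp(-\tfrac{q_n^2}{2}\mc I_U(\cdot))=n^{-\mc I_U(\cdot)}$. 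Here $o_\delta(1)$ denotes a term tending to $0$ as $\delta\to0$, uniformly over $\bm B$ in compact sets, by continuity of $\mc I_U$. Ordering does not matter for the Hausdorff statement, since $\mc K_k(a)$ is invariant under simultaneous permutation of rows and columns.

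\emph{Upper half (a).} Standard Gaussian maxima give $\max_{i,j}|A_{ij}|\le (\max(\alpha,1)+o(1))q_n$ with high probability. So all of $\mc C_{n,k}$ lies in a fixed compact set $L$. The set $L\setminus\mc K_k(a)^{\varepsilon}$ is compact, where $\mc K^\varepsilon$ is the $\varepsilon$-neighborhood. Every $\bm B$ in it violates some constraint, so $\mc I_U(\bm B)>|U|$ for some $U$, and by compactness there is a uniform margin: $\mc I_U(\bm B)\ge |U|+2\eta$ for some $U$ depending on $\bm B$. Cover this compact set by finitely many $\delta$-balls, with $\delta$ small enough that the $o_\delta(1)$ term is below $\eta$. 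For each ball with centre $\bm B$ and violated $U$, the event that some $\bm A_S/q_n$ falls in the ball implies that some $|U|$-tuple of indices has its minor $\delta$-close to $\bm B_U$. There are at most $n^{|U|}$ such tuples, so the union bound gives probability at most $n^{|U|}\cdot n^{-|U|-2\eta+\eta}\to0$. Summing over the finitely many balls proves (a).

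\emph{Lower half (b).} For $\bm B\in\mc K_k(a)$ and $t\in(0,1)$, each $\mc I_U$ is a quadratic form, so $\mc I_U(t\bm B)=t^2\mc I_U(\bm B)\le t^2|U|$. Hence the points satisfying all constraints strictly are $\varepsilon/2$-dense in $\mc K_k(a)$. Taking a finite $\varepsilon/2$-net of such points, it suffices to show that for each fixed $\bm B$ with $\mc I_U(\bm B)\le|U|-2\eta$ for all $U\neq\varnothing$, some $S$ has $\|\bm A_S/q_n-\bm B\|_F\le\delta$ with high probability. Let $N$ count ordered $k$-tuples of distinct indices with this property. Then $\mb E N=n^{k-\mc I_{[k]}(\bm B)+o(1)}\to\infty$. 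For the second moment, split pairs $(S,S')$ by their overlap, identified with positions $U\subset[k]$ in $S$ and $U'$ in $S'$. Overlapping tuples share exactly the entries indexed by the common indices, so
\[
\mb P(S,S'\text{ both good})\le p_S\, p_{S'}\,/\,p_{U'}, \qquad p_{U'}=n^{-\mc I_{U'}(\bm B)+o(1)} .
\]
This holds even when the shared entries sit in permuted positions, by conditioning on the shared block and bounding the rest of $S'$ by its conditional probability. The number of such pairs is at most $n^{2k-|U|}$, so
\[
\frac{\mb E N^2}{(\mb E N)^2}\le 1+\sum_{\varnothing\ne U}n^{-|U|+\mc I_{U}(\bm B)+o(1)}+o(1)\le 1+O\bigl(n^{-2\eta+o(1)}\bigr).
\]
Chebyshev then gives $N>0$ with high probability.

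I expect the main obstacle to be making this second-moment bound rigorous. The $\delta$-balls must be handled so that the $o(1)$ exponents are uniformly dominated by $\eta$, and the conditional factorization must be exact for the Gaussian entries. The upper-bound counterpart is routine, and I would settle the uniformity there first and reuse it.
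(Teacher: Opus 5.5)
Your argument is correct. It uses the same two-half, first/second-moment strategy as the paper, but the execution differs in three places.

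\emph{Upper half.} The paper needs no compactness or covering. One union bound with the $\chi^2_{d_s}$ tail shows that, with high probability, $\mathcal I_{T,a}(\bm A_T/q_n)\le |T|+\varepsilon$ for all $|T|\le k$. That is, $\mathcal C_{n,k}\subseteq \sqrt{1+\delta}\,\mathcal K_k(a)$, and homogeneity of $\mathcal I_U$ finishes. Your cover of $L\setminus\mathcal K_k(a)^{\varepsilon}$ by $\delta$-balls with a uniform violation margin rests on the same tail estimate. It additionally needs an a priori entry bound and the margin step. That bound should read $\max_{i<j}|A_{ij}|\approx\sqrt2\,q_n$ rather than $q_n$; this is harmless, since only boundedness is used.

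\emph{Lower half.} The paper counts only $\bm i\in V_{n,1}\times\cdots\times V_{n,k}$, so any two tuples overlap in the same positions $U$. With entrywise box events this gives the exact identity $\mathbb P(E_{\bm i}\cap E_{\bm j})=p_n(\bm i)^2/p_n(U)$, so only the lower bound of Lemma~\ref{Gaussian bound} is ever needed. You count all tuples and handle overlaps in permuted positions by conditioning on the shared block. This avoids the blocking device but forces you to bound the non-shared part from above.

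\emph{Uniformity.} Your finite net of strictly feasible points $t\bm B$ is more elementary than the paper's appeal to Newey's theorem.

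Two details need tightening.

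First, with Frobenius balls, conditioning gives $\mathbb P(S,S'\text{ both good})\le p_S\,p_{\mathrm{rest}'}$. But $p_{S'}\le p_{U'}\,p_{\mathrm{rest}'}$, so $p_{\mathrm{rest}'}\ge p_{S'}/p_{U'}$, which is the wrong direction: your displayed bound $p_Sp_{S'}/p_{U'}$ does not follow. It holds only up to a factor $n^{O(\delta)}$, which the margin $2\eta$ absorbs once $\delta$ is small. Alternatively, switch to entrywise boxes, as the paper does. Then $\{S'\text{ good}\}$ is the intersection of the shared-block box with a remainder event independent of $\bm A_S$. This makes your inequality exact even for permuted overlaps.

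Second, restrict the count to increasing tuples $s_1<\cdots<s_k$. Permutation invariance of $\mathcal K_k(a)$ does not by itself turn an ordered tuple with $\bm A_S/q_n\approx\bm B$ into an element of $\mathcal C_{n,k}$ near $\bm B$. The sorted minor is near $\bm P^\top\bm B\bm P$ for an uncontrolled permutation $\bm P$. The paper's blocks give increasing tuples automatically. With this restriction the rest of your argument is unchanged.
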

Theorem \ref{thm:master} states that the random set formed by all principal minors of $\bm A$, after scaling by $q_n$, becomes dense in $\mc K_k(a)$ with respect to the Hausdorff distance. Given Theorem \ref{thm:master}, since the functional $\bm A \mapsto \lambda_{\rm max} \lb \bm A \rb$ is convex and $1$-Lipschitz, Lemma \ref{convexity} gives
\begin{align}
    \left| \frac{M_{n,k}}{q_n} - \gamma_k(a) \right| = \left| \max_{\bm B \in \mc C_{n,k}} \lambda_{\rm max} \lb \bm B \rb - \max_{B \in \mc K_{k}(a)} \lambda_{\rm max} \lb \bm B \rb \right| 
    &\leq d_{\rm H} \lb \mbox{conv} \lb \mc C_{n,k} \rb, \mc K_k(a)  \rb \nonumber \\ 
    &\leq d_{\rm H} \lb \mc C_{n,k}, \mc K_{k}(a) \rb \stackrel{\mb P}{\to} 0. \label{continuous-mapping}
\end{align}
Here the first inequality follows from Lemma \ref{convexity} and the fact that $\mc K_k(a)$ is convex, while the second follows because convexifying compact sets does not increase the Hausdorff distance (see, for example, Proposition 1.4 in \cite{iusem2010distances}). This yields Theorem \ref{thm:main}.

We note that the Gaussian assumption is crucial for Theorem \ref{thm:combinatorics}; that is, the optimization problem admits an elegant explicit formula for $\gamma_m(.)$ because of the regular behavior of the Gaussian density. A universality-type result should not be expected, since the marginal distributions of the largest eigenvalues of the principal minors differ across values of $k$. Interestingly, even for growing $k$, when the largest eigenvalue of each principal minor is asymptotically Tracy--Widom distributed, universality of the first-order limit can still fail.

\subsection{Results for growing $k$}

We now investigate the first-order limit of $M_{n,k}$ when $k=k_n \to \infty$. The scaling of $M_{n,k}$ is quite different from that in the fixed-$k$ case. Assume $k/n \to c \in [0,1]$. We will show that when $c=0$, the logarithmic scaling of extreme-value statistics persists, whereas when $c>0$, we have $M_{n,k} \sim \mc E(c) \sqrt{n}$ for some function $\mc E(c)$. The latter scaling is the same as that of the largest eigenvalue of Wigner matrices and suggests that the asymptotic distribution might interpolate between the Tracy--Widom and Gumbel laws.

\begin{thm} \label{thm:c=0}
    Suppose $k \to \infty$ and $k/n \to 0$. Then
    \[
    \frac{M_{n,k}}{2\sqrt{k \cdot \log \lb n/k \rb}} \stackrel{\mb P}{\to } 1.
    \]
\end{thm}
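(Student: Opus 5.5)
We prove matching upper and lower bounds for $M_{n,k}$, both at the scale $2\sqrt{k\log(n/k)}$, and each holding with probability tending to one.

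\emph{Upper bound.} We use a union bound combined with an $\varepsilon$-net, over supports of size exactly $k$; this suffices because $M_{n,k}$ is monotone in the support size. Fix $S$ with $|S|=k$ and a unit vector $\bm x$ supported on $S$. Then $\bm x^\top\bm A\bm x$ is centered Gaussian with variance
\[
\sum_i a x_i^4+2\sum_{i\neq j}x_i^2x_j^2 = 2+(a-2)\sum_i x_i^4 .
\]
For $a\le 2$ this variance is at most $2$. For $a>2$ we cannot bound $\sum_i x_i^4$ by $1/k$ uniformly in $\bm x$, so we separate the diagonal part: $\bm x^\top\bm A\bm x = \bm x^\top \bm A_0\bm x+\sum_i A_{ii}x_i^2$, where $\bm A_0$ is $\bm A$ with its diagonal set to zero. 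The diagonal contribution is at most $\max_i |A_{ii}| = O_{\mb P}(\sqrt{\log n})$, which is $o(\sqrt{k\log(n/k)})$ because $k\to\infty$ and $\log n\le \log(n/k)+\log k$. More precisely, $\log n/(k\log(n/k))\to0$ when $k\to\infty$ and $k/n\to0$. The off-diagonal form has variance at most $2$.

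Take a $1/4$-net $\mathcal N_S$ of the unit sphere in $\R^S$, of size at most $9^k$. The standard argument $\lambda_{\max}(\bm B)\le 2\max_{\bm x\in\mathcal N}\bm x^\top \bm B\bm x$ loses a constant factor, which is not acceptable here. We therefore use an $\varepsilon$-net, of size at most $(3/\varepsilon)^k$, and the bound $\lambda_{\max}\le (1-2\varepsilon)^{-1}\max_{\mathcal N}$. The union bound runs over $\binom nk(3/\varepsilon)^k\le \exp\{k\log(n/k)+k+k\log(3/\varepsilon)\}$ pairs. Because $k\log(n/k)$ dominates $k\log(3/\varepsilon)$ as $n/k\to\infty$, the Gaussian tail $\exp(-t^2/4)$ evaluated at $t=(1+\delta)2\sqrt{k\log(n/k)}$ overcomes this count. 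Letting $\varepsilon\to0$ gives $\limsup M_{n,k}/(2\sqrt{k\log(n/k)})\le1$.

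\emph{Lower bound.} This is the main obstacle, since a first-moment argument alone does not give it. The candidate vector is $\bm x=\bm 1_S/\sqrt k$, for which $\bm x^\top\bm A\bm x=\frac1k\big(\sum_{i\in S}A_{ii}+2\sum_{i<j\in S}A_{ij}\big)$. We lower bound $M_{n,k}$ by $\max_{|S|=k}\frac2k\sum_{i<j\in S}A_{ij}$ minus the diagonal term; the diagonal term is negligible because $|\sum_{i\in S}A_{ii}|/k\le\max_i|A_{ii}|$. The problem then reduces to the largest average density of a $k$-subset in a Gaussian-weighted complete graph, that is, to showing
\[
\max_{|S|=k}\sum_{i<j\in S}A_{ij}\ge(1-\delta)\,k^{3/2}\sqrt{\log(n/k)}.
\]
The natural approach is the second-moment method on the count $Z$ of sets $S$ whose edge sum exceeds the threshold. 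Pairs $S,S'$ with overlap $|S\cap S'|=\ell$ have correlation about $\ell^2/k^2$, and the delicate regime is intermediate overlap.

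If the second moment turns out to be too costly, I would instead use a greedy or two-step construction. Split $[n]$ into $k$ blocks of size $n/k$, and choose vertices sequentially: at step $m$, pick from block $m$ the vertex maximizing the sum of its edges to the $m-1$ vertices already chosen. Given the past, these edge sums are independent $N(0,m-1)$ across the $n/k$ candidates, so their maximum is approximately $\sqrt{2(m-1)\log(n/k)}$. Summing over $m$ gives $\sum_m\sqrt{2m\log(n/k)}\approx \tfrac{2\sqrt2}{3}k^{3/2}\sqrt{\log(n/k)}$, which is only a constant fraction of the target. The greedy is therefore suboptimal, and a genuine second-moment or Gaussian-comparison argument is needed.

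My first choice is to bound the expected supremum directly. Apply Sudakov minoration or Slepian comparison to the Gaussian process $(\sum_{i<j\in S}A_{ij})_S$, restricted to a packing family of sets with pairwise overlaps at most $\eta k$. Such a family exists with $\exp\{(1-o(1))k\log(n/k)\}$ members, and on it the normalized correlations are at most $\eta^2$. Slepian comparison with an i.i.d. family then gives expected maximum at least $(1-\eta)\sqrt{2\cdot\binom k2\cdot 2k\log(n/k)}\cdot(1-o(1))\approx k^{3/2}\sqrt{2\log(n/k)}$.

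Before relying on this, the constant needs to be rechecked against the target $k^{3/2}\sqrt{\log(n/k)}$, since the $\sqrt2$ factor must be reconciled. Each edge contributes $2A_{ij}/k$, so the variance of $\bm x^\top\bm A\bm x$ is $4\binom k2/k^2\approx2$. The expected maximum of $N$ variables of variance $2$ is $\sqrt{2\cdot2\log N}=2\sqrt{k\log(n/k)}$, which matches the target. Finally, Gaussian concentration of the maximum, whose Lipschitz constant is $O(1)$ in the normalized scale, upgrades the bound on the expectation to convergence in probability.
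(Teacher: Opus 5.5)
Your upper bound is correct and is essentially the paper's $\varepsilon$-net argument. The paper handles the diagonal by replacing $A_{ii}$ with $\sqrt2 Z_i$, which is equivalent to your splitting off $\max_i|A_{ii}|$.

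The lower bound has a genuine gap. The route you settle on needs a family $\mathcal F$ of $k$-subsets with pairwise overlaps at most $\eta k$ and $\exp\{(1-o(1))k\log(n/k)\}$ members. No such family exists for fixed $\eta<1$. Put $m=\lfloor\eta k\rfloor+1$. Every $m$-subset of $[n]$ lies in at most one member of $\mathcal F$, so $|\mathcal F|\le\binom nm/\binom km\le(en/k)^m$, that is, $\log|\mathcal F|\le(\eta+o(1))\,k\log(n/k)$. Since $\sum_{i<j\in S}A_{ij}$ has variance $\binom k2$, the expected maximum over such an $\mathcal F$ is at most $\bigl(\binom k2\cdot 2\log|\mathcal F|\bigr)^{1/2}\le(\sqrt\eta+o(1))\,k^{3/2}\sqrt{\log(n/k)}$. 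Slepian with correlations $\le\eta^2$ actually gives only $\sqrt{\eta(1-\eta^2)}\le 0.62$ of the target, which is worse than your greedy's $2\sqrt2/3$. Your displayed Slepian bound exceeds your own upper bound by $\sqrt2$, which already signals the miscount; the i.i.d. benchmark is $\sigma\sqrt{2\log N}$ with $\sigma^2=\binom k2$. The log-cardinality $k\log(n/k)$ is only reached by keeping pairs with overlaps up to $(1-o(1))k$. So you cannot avoid weighing, overlap by overlap, the number of pairs against the correlation gain; a genuine second-moment computation is needed.

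Your fallback, the second moment for the all-plus family $\bm 1_S/\sqrt k$, also fails on part of the range. The problem is the typical overlap, not the intermediate one you flagged.
\begin{itemize}
\item Two uniform $k$-sets share $R\approx k^2/n$ indices, so $\rho\approx R^2/k^2\approx k^2/n^2$.
\item The threshold satisfies $z^2\approx 2k\log(n/k)$.
\item Plackett's identity gives $\mc R_z(\rho)\gtrsim\exp(z^2\rho/3)$.
\item Hence $\mb E Z^2/(\mb E Z)^2\to\infty$ as soon as $k^3\log(n/k)/n^2\to\infty$, for example when $k=n^{0.9}$.
\end{itemize}

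The paper's fix is to randomize signs. It counts exceedances over $\mc V_{n,k}$, the vectors with entries $\pm1/\sqrt k$ on a $k$-support, whose log-cardinality is still $k\log(n/k)+O(k)$. The correlation then becomes $\langle\bm x,\bm y\rangle^2=S_R^2/k^2$, where $S_R$ is a Rademacher sum over the overlap, so $\mb E[S_R^2\mid R]=R$. The small-overlap region $R\le c_0k/\log(n/k)$ therefore contributes $\lesssim z_n^2\,\mb E R/k^2=z_n^2/n\to0$. For larger $R$, the paper bounds $S_R^2\le R^2$ and beats $z\exp\{z^2\rho/(1+\rho)\}$ with the hypergeometric tail $(e^2k^2/(nr))^r$. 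Replacing your lower-bound argument with this signed second-moment computation would complete the proof; your final concentration step is fine.
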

Theorem \ref{thm:c=0} states that when $k \to \infty$ and $k/n \to 0$, the first-order limit of $M_{n,k}$ is independent of $a$. We adapt the second-moment method to prove Theorem \ref{thm:c=0}, using a sparse set of points on the sphere. At first glance, one might guess that when $k/n \to c>0$, the corresponding first-order limit of $M_{n,k}$ would be $2\sqrt{c \log (1/c)} \cdot \sqrt{n}$. However, this is not the case (although the scaling is indeed $\sqrt{n}$), as we will see below.

\begin{thm} \label{c>0}
    Suppose $k/n \to c \in (0,1]$. Then the following statements hold:
    \begin{itemize}
        \item There exists a function $\mc E: (0,1] \to (0,2]$ such that 
        \[
    \frac{M_{n,k}}{\sqrt{n}} \stackrel{\mb P}{\to} \mc E(c). 
    \]
    \item The function $\mc E$ is increasing and concave on $(0,1]$, and $\mc E(1)=2$.
    \item As $c \to 0$, we have 
    \[
    \lim_{c \to 0} \frac{\mc E(c)}{2\sqrt{c\cdot \log \lb 1/c \rb}} = 1.
    \]
    \item As $c \to 1$, we have 
        \[
        \frac{\pi}{6}\leq
        \liminf_{c\uparrow1}\frac{2-\mc E(c)}{(1-c)^3}
        \leq
        \limsup_{c\uparrow1}\frac{2-\mc E(c)}{(1-c)^3}
        \leq \frac{\pi}{3}.
        \]
    \end{itemize}
\end{thm}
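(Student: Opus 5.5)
We treat the four bullets of Theorem \ref{c>0} in turn. The common thread is to view $M_{n,k}/\sqrt n$ as the maximum of a Gaussian process indexed by unit vectors with at most $k$ nonzero entries.

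\emph{Existence of the limit via superadditivity.} Gaussian concentration (Borell--TIS) gives this: since $\bm x\mapsto \bm x^\top\bm A\bm x$ has variance at most $\max(2,a)$ on the sphere, $M_{n,k}$ concentrates around its mean with $O(1)$ fluctuations. It therefore suffices to show that $e_{n,k}:=\mathbb E M_{n,k}/\sqrt n$ converges when $k/n\to c$.
\begin{enumerate}[label=(\alph*)]
\item Replacing $\bm A$ by the GOE-normalized matrix changes $M_{n,k}$ by at most $\max_i|A_{ii}|\,|1-\sqrt{2/a}|=O(\sqrt{\log n})$. So we may take $a=2$.
\item For disjoint blocks, a Slepian/Gordon-type comparison, or simply restricting to vectors $\bm x=(\sqrt t\,\bm u,\sqrt{1-t}\,\bm v)$ and dropping the cross term, which is mean zero, gives the two-scale inequality
\[
\mathbb E M_{n_1+n_2,k_1+k_2}\ \ge\ \max_t\bigl(t\,\mathbb E M_{n_1,k_1}+(1-t)\,\mathbb E M_{n_2,k_2}\bigr)-o(\sqrt n).
\]
Also $\mathbb E M_{n,k}$ is nondecreasing in $k$. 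Together these give a Fekete-type argument: $f(c):=\lim e_{n,\lfloor cn\rfloor}/\ldots$ exists along the diagonal scaling, using $\mathbb E M_{Ln,Lk}\gtrsim \sqrt L\,\mathbb E M_{n,k}$ from block-diagonal embeddings.
\end{enumerate}

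I expect the honest difficulty here. Block embedding loses the $\sqrt{L}$ normalization unless the cross terms are used. So I would instead prove existence through an interpolation/Guerra–Toninelli argument for the free energy
\[
F_n(\beta)=\frac1n\log\sum_{|S|=k}\int_{S^{k-1}}e^{\beta\sqrt n\,\bm x^\top\bm A_S\bm x}\,d\bm x.
\]
This is superadditive in $n$ with $k/n$ fixed, by Gaussian interpolation, since the covariance $(\bm x^\top\bm y)^2$ is convex in the overlap. Then let $\beta\to\infty$ to obtain $\mathcal E(c)$.

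\emph{Monotonicity, concavity, and $\mathcal E(1)=2$.} Monotonicity is immediate from monotonicity in $k$. Concavity follows from the same interpolation. Split $[n]$ into parts of proportion $\theta,1-\theta$ with sparsities $c_1\theta n$ and $c_2(1-\theta)n$, and restrict to vectors with weight $\theta,1-\theta$ on the two parts. Each part has spectral scale $\sqrt{\theta n}$. Scaling arguments for the limit (free energy convexity of $\beta\mapsto F(\beta)$, Legendre dual) then give concavity of $c\mapsto\mathcal E(c)$. The value $\mathcal E(1)=2$ is the semicircle edge, i.e.\ $\lambda_{\max}(\bm A)/\sqrt n\to2$.

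\emph{Small-$c$ asymptotics.} For the upper bound, use a union bound over $\binom nk$ supports with an $\varepsilon$-net on $S^{k-1}$ and the Gaussian tail of $\bm x^\top\bm A\bm x$ (variance $2$). This gives $\mathcal E(c)\le 2\sqrt{c\log(e/c)}(1+o(1))$. For the lower bound, apply the second-moment method on flat vectors $k^{-1/2}\mathbf 1_S$ with sign patterns, as in Theorem \ref{thm:c=0}, run at $k=cn$ with $c$ small. The overlap analysis goes through with errors that are $o(1)$ as $c\to0$.

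\emph{Behaviour as $c\to1$.} For the upper bound on $2-\mathcal E(c)$, take the top eigenvector $\bm v$ of the $n\times n$ matrix and truncate it to its $k$ largest coordinates. The delocalized $\bm v$ has nearly Gaussian entries $g/\sqrt n$, so the discarded mass is $\int_{|g|<t}g^2\phi\approx \tfrac{2t^3}{3\sqrt{2\pi}}$, where $t\approx\sqrt{\pi/2}\,(1-c)$. Renormalizing loses a factor of order $(1-c)^3$ in the quadratic form. This gives the constant $\pi/3$ after using an eigenvector-perturbation correction. For the lower bound $\pi/6$, compare with the best $k$-subset of the top few eigenvectors. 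Near the edge the eigenvalue gaps are $n^{-2/3}$, so an optimal sparse vector must have overlap essentially one with the top eigenspace, and the same Gaussian-mass computation gives the loss, up to a factor of $2$ from optimizing over mixtures. This last step is the one I expect to be the main obstacle, since it requires quantitative delocalization/near-Gaussianity of eigenvectors and rigidity uniformly over $\binom nk$ supports.
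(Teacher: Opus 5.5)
\textbf{The gap: the upper bound on $\mathcal E(c)$ near $c=1$.} The genuine gap is in the last bullet, in the direction $\liminf_{c\uparrow1}(2-\mathcal E(c))/(1-c)^3\ge\pi/6$, i.e.\ the \emph{upper} bound on $\mathcal E(c)$. Your plan to compare with the top few eigenvectors rests on a heuristic that fails, because the small edge spacing works against you. Order $n\eta^{3/2}$ eigenvalues lie within $\eta\sqrt n$ of the edge. Any unit vector whose spectral mass lies in that window loses at most about $\eta\sqrt n$ in the quadratic form. So the optimizer is in no way forced to align with a bounded number of eigenvectors.

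Consequently, the fact that a single Haar vector $\bm v_1$ has $\max_{|S|=k}\sum_{i\in S}v_1(i)^2\approx\rho_c$ says nothing about the best $k$-sparse overlap achievable inside the span of a growing number of near-edge eigenvectors. The ``factor of $2$ from optimizing over mixtures'' is not derived from anything. Controlling sparse overlaps uniformly over such a subspace and over all $\binom nk$ supports is essentially the original problem.

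The missing idea is a Gaussian comparison with a \emph{linear} process, which needs no eigenvector information at all. With $a=2$, $\bm h\sim N(\bm 0,\bm I_n)$ and unit $k$-sparse $\bm x,\bm y$,
\[
\mathbb E\bigl(\bm x^\top\bm A\bm x-\bm y^\top\bm A\bm y\bigr)^2=4\bigl(1-(\bm x^\top\bm y)^2\bigr)\le 8\bigl(1-\bm x^\top\bm y\bigr)=\mathbb E\bigl(2\bm h^\top\bm x-2\bm h^\top\bm y\bigr)^2.
\]
Sudakov--Fernique then gives $\mathbb E M_{n,k}\le 2\,\mathbb E\bigl(\sum_{i\le k}h_{(i)}^2\bigr)^{1/2}$, where $h_{(1)}^2\ge h_{(2)}^2\ge\cdots$ are the ordered $h_i^2$. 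Combine three facts: the strong law for this trimmed sum (an $L$-statistic), uniform integrability, and the convergence of $\mathbb E M_{n,k}/\sqrt n$. Together they give $\mathcal E(c)\le 2\sqrt{\rho_c}$, where $\rho_c=\mathbb E[Z^2\mathbf 1\{|Z|\ge t_c\}]$ and $\mathbb P(|Z|\ge t_c)=c$. Finally, $2-2\sqrt{\rho_c}=\frac{\pi}{6}(1-c)^3+O((1-c)^5)$.

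\textbf{Other steps.} For the opposite direction, your truncated top eigenvector is the paper's construction. What is needed, however, is an averaging argument rather than an eigenvector-perturbation correction. Write $\bm x=\sqrt{\xi_n}\,\bm v_1+\sqrt{1-\xi_n}\,\bm w$ with $\bm w\perp\bm v_1$. Haar invariance of the remaining eigenbasis gives $\mathbb E[\bm x^\top\bm A\bm x\mid\bm v_1,\lambda_1,\dots,\lambda_n]=\xi_n\lambda_1+(1-\xi_n)(\operatorname{Tr}\bm A-\lambda_1)/(n-1)$, hence $\mathcal E(c)\ge2\rho_c$. A worst-case bound $\bm w^\top\bm A\bm w\ge\lambda_n$ would only give the constant $2\pi/3$.

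Your remaining steps are sound in outline but heavier than necessary. For existence, the paper applies Slepian directly at zero temperature to $n^{-1/2}\bm\sigma^\top\bm A\bm\sigma$. The index set is the product of the sets $\{\bm\sigma\in\mathbb R^{n_i}:\|\bm\sigma\|_2=\sqrt{n_i},\ \|\bm\sigma\|_0\le k_i\}$. The resulting block weights $n_1/n$ and $n_2/n$ are exactly what lets convexity of $q\mapsto q^2$, with the cross term kept, beat the independent sum. This yields $\sqrt{n_1+n_2}\,\mathbb E M_{n_1+n_2,k_1+k_2}\ge\sqrt{n_1}\,\mathbb E M_{n_1,k_1}+\sqrt{n_2}\,\mathbb E M_{n_2,k_2}$, which gives the Fekete limit and concavity in one stroke, with no Legendre duality. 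It also avoids two issues in your free-energy route: at positive temperature this product set has measure zero inside the sparse sphere, and you would need to interchange $n\to\infty$ with $\beta\to\infty$.

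For small $c$, your direct rerun of the second-moment estimates at $k=cn$ can be made to work. The small-overlap term is only $O(c\log(1/c))$ rather than $o(1)$, but Paley--Zygmund together with concentration suffices. The paper instead just transfers Theorem~\ref{thm:c=0} along a diagonal sequence $n_j\to\infty$, $k_j=\lfloor c_jn_j\rfloor$.
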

We are currently not aware of any explicit expression for the function
$\mc E(c)$. However, it is easy to check that
$\mc E(c)\not\equiv 2\sqrt{c\log(1/c)}$: near $c=1$, $\mc E(c)$ is
close to two, whereas the latter function is close to zero. It is evident
that $\mc E$ can be extended continuously to zero by setting
$\mc E(0)=0$. Interestingly, the third item of
Theorem~\ref{c>0} shows that $\mc E$ is not differentiable at $0$,
whereas the fourth item shows that it is differentiable from the left at
$1$, with $\mc E'(1)=0$.


\section{Results for Wishart matrices} \label{sec:wishart}

We now extend the framework developed for Wigner matrices to Wishart matrices. Although the ideas and framework are the same, a few technical difficulties arise because we treat general non-Gaussian distributions. We begin with the setting. Define
\[
\bm X =  \bm X_n:= \la \xi_{ij}: \, 1\leq i \leq p, \, 1\leq j \leq n \ra \in \mb R^{p \times n}
\]
where the entries are i.i.d. copies of a random variable $\xi$ with law $\mu$. We assume that
\begin{align} \label{regularity conds}
    \mb E \xi = 0, \qquad \mb E \xi^2=1, \qquad \mb E \exp \lb t_0 \xi^2 \rb< \infty
\end{align}
for some $t_0>0$.

We are interested in the quantity
\[
T_{n,k} := \max_{|S|=k} \lmax \lb \bm X_{[S]}^\top \bm X_{[S]} \rb. 
\]
As in the Wigner case, we deduce the LLN for $T_{n,k}$ by determining the closure of the random set formed by rescaling all principal minors of size at most $k$. More precisely, for a fixed $k$, we seek the high-probability limit, with respect to $d_{\rm H}$, of the random set
\begin{align}
    \mc C^{\rm Wishart}_{n,k}:= \la \frac{ \bm X_{[S]}^\top \bm X_{[S]} }{p};  S \subset [n] \, \text{and} \, |S|=k \ra.
\end{align}
Our first result is the analog of Theorem \ref{thm:master} for the Wishart case. To describe the limit, let $\mc P_2 \lb \mb R^k \rb$ be the set of all Borel probability measures on $\mb R^k$ with finite second moment, and let $D \lb \mu \| \nu \rb$ denote the KL divergence
\[
D \lb \mu \| \nu \rb:= \begin{cases}
    \intop \log \lb \frac{d \mu}{d \nu} \rb \, d\mu, \qquad &\text{if $\mu \ll \nu$}, \\
    +\infty   &\text{otherwise}.
\end{cases}
\]
Define
\begin{align}
\mc M_{k, \beta, \mu} &:=  \la \nu \in \mc P_2 \lb \mb R^k \rb:   D \lb \nu_U \Big\| \,  \mu^{\otimes |U|} \rb \leq \frac{|U|}{\beta} \, \text{for all} \, \emptyset \neq U \subset [k]   \ra,   \\
\Gamma_{k,\beta, \mu}   &:= \la \int \bm x \bm x^\top \, d\nu:  \nu \in \mc M_{k,\beta, \mu}   \ra.
\end{align}
\begin{thm} \label{thm:Wishart-limit}
Suppose $p/\log n \to \beta \in  \lb 0, \infty \rb$. The following statements hold:
\begin{itemize}
    \item For every $k \geq 1$ and every $\mu$ satisfying \eqref{regularity conds}, $\Gamma_{k,\beta,\mu}$ is a totally bounded and convex set.
    \item As $n \to \infty$, we have
    \[
    d_{\rm H} \lb \mc C^{\rm Wishart}_{n,k}, \Gamma_{k, \beta, \mu}  \rb \stackrel{\mb P}{\to} 0.
    \]
\end{itemize}
    
\end{thm}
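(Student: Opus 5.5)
The guiding picture is a Sanov large-deviation description of the columns. For a fixed index set $S$ with $|S|=k$, write $\bm X_{[S]}^\top\bm X_{[S]}/p=\int \bm x\bm x^\top\,dL_S$, where $L_S:=p^{-1}\sum_{i=1}^p\delta_{(\xi_{ij})_{j\in S}}$ is the empirical measure of the $p$ rows restricted to $S$. Sanov's theorem suggests $\mb P(L_S\approx\nu)\approx e^{-pD(\nu\|\mu^{\otimes k})}$, and $p\approx\beta\log n$. Sub-collections $U\subset S$ can be chosen in about $n^{|U|}$ ways, so a configuration with marginal $\nu_U$ is attained somewhere with high probability iff $\beta D(\nu_U\|\mu^{\otimes|U|})<|U|$ for every $U$. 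This is precisely the constraint defining $\mc M_{k,\beta,\mu}$. The plan has three parts: structural properties of $\Gamma_{k,\beta,\mu}$, the upper inclusion, and the lower inclusion.

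\emph{Structure.} Convexity of $\Gamma_{k,\beta,\mu}$ follows from joint convexity of $D$ and linearity of marginalization and of $\nu\mapsto\int \bm x\bm x^\top d\nu$. For total boundedness, I would apply the Donsker--Varadhan inequality with $f=t_0\xi^2$ coordinatewise. This gives $\int x_j^2\,d\nu_{\{j\}}\le t_0^{-1}\bigl(1/\beta+\log\mb E e^{t_0\xi^2}\bigr)$, which bounds the diagonal entries, and Cauchy--Schwarz then bounds the off-diagonal ones.

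\emph{Upper inclusion:} $\sup_{\bm C\in\mc C_{n,k}}d(\bm C,\Gamma_{k,\beta,\mu})\to0$. Fix $\ve>0$ and let $F_\ve$ be the set of PSD matrices in a large ball (outside which Gram matrices fall with negligible probability, by a union bound using the $e^{t_0\xi^2}$ moment) at distance $\ge\ve$ from $\Gamma$. If some $S$ produces a point in $F_\ve$, then, by looking at the minimal violating constraint, there is a $U\subset S$ such that the $|U|\times|U|$ Gram block lies in a region where the contracted rate $J_U(\bm Q):=\inf\{D(\nu\|\mu^{\otimes|U|}):\int \bm x\bm x^\top d\nu=\bm Q\}$ exceeds $|U|/\beta+\delta$. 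Cramér's theorem for the i.i.d.\ vectors $\mathrm{vec}(\bm y\bm y^\top)$, whose rate is exactly $J_U$ by the contraction principle (these vectors have exponential moments near $0$ but not everywhere, so I would use a truncation/exponential-tightness argument), gives probability $\le e^{-p(|U|/\beta+\delta/2)}=n^{-|U|-\beta\delta/2+o(1)}$. A union bound over the $n^{|U|}$ choices of $U$ then finishes this part. The subtle point is the reduction from ``$\bm Q\notin\Gamma^\ve$'' to ``some $U$ has $J_U(\bm Q_U)>|U|/\beta+\delta$''. This requires a compactness/lower-semicontinuity argument together with the observation that the joint constraints are realized by gluing: the set of $\nu$ with prescribed marginal constraints is weakly compact (the entropy bounds give tightness plus uniform integrability of $x^2$), so $\Gamma$ is closed and the complement is characterized by finitely many strict inequalities uniformly.

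\emph{Lower inclusion:} every $\bm Q=\int\bm x\bm x^\top d\nu$ with $\nu\in\mc M$ is approximated by some element of $\mc C_{n,k}$. By convexity, I would first mix with $\mu^{\otimes k}$ to make all constraints strict with slack $\delta$, and truncate $\nu$ to bounded support. I would then run a second-moment/greedy argument. Count the ordered $k$-tuples $(j_1,\dots,j_k)$ of distinct columns whose row-empirical measure is close to $\nu$ in a weak topology strong enough to control second moments (fine for bounded test functions after truncation). The expectation is $n^{k}e^{-pD(\nu)}\to\infty$. For the variance, pairs of tuples sharing index set $U$ contribute roughly $n^{2k-|U|}e^{-p(2D(\nu)-D(\nu_U))}$, whose ratio to the squared mean is $n^{-|U|}e^{pD(\nu_U)}\le n^{-\beta\delta}\to0$ precisely because of the strict constraints. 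Chebyshev then yields existence with probability $\to1$, and doing this for a finite $\ve$-net of $\Gamma$ (total boundedness) gives the Hausdorff convergence. The main obstacle I expect is this second-moment computation: it needs a sharp conditional Sanov lower and upper bound for the event that the extension from $U$ to $S$ is typical, given that the $U$-block is already close to $\nu_U$. I would handle it by discretizing $\nu$ onto a finite alphabet (coarse-graining the rows into cells) and using method-of-types estimates, which are exact up to $e^{o(p)}$ factors.
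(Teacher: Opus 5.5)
\textbf{Genuine gap: the upper inclusion.} Your structural part is fine. Your lower-bound plan is essentially the paper's blocking second-moment argument. The paper avoids discretization by putting the constraints $|p^{-1}\sum_a\log f_U(\bm Y^{\bm i}_{a,U})-D_U(\nu)|<\tau$ into the events, which makes the overlap bound an exact exponential Markov inequality via $\mb E[f_{[\bm i]}f_{[\bm j]}/f_U]=1$.

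The gap is at the step you flag as subtle. Your reduction ``$\bm Q$ is $\varepsilon$-far from $\Gamma_{k,\beta,\mu}$ $\Rightarrow$ some block has $J_U(\bm Q_U)>|U|/\beta+\delta$'' amounts to saying that $\Gamma_{k,\beta,\mu}$ has the same closure as
\[
\widetilde\Gamma:=\bigl\{\bm Q\succeq 0:\ J_U(\bm Q_U)\le |U|/\beta\ \text{for all }\varnothing\neq U\subseteq[k]\bigr\}.
\]
Only $\Gamma_{k,\beta,\mu}\subseteq\widetilde\Gamma$ holds. Membership in $\Gamma_{k,\beta,\mu}$ needs one $\nu$ whose marginals meet all budgets at once, whereas $\widetilde\Gamma$ allows a separate witness for each $U$. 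These witnesses cannot be glued. The minimizer of $J_{[k]}(\bm Q)$ is a tilt $\nu^*\propto e^{\langle\bm\Lambda,\bm x\bm x^\top\rangle}\mu^{\otimes k}$, and its $U$-marginals are not quadratic tilts of $\mu^{\otimes|U|}$ unless $\mu$ is Gaussian. Hence $D(\nu^*_U\|\mu^{\otimes|U|})>J_U(\bm Q_U)$ in general.

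Here is a concrete instance. Let $k=2$ and let $\mu$ be symmetric and finitely supported, with $\xi^2$ taking at least three values and $\mb E\xi^4\neq 3$. Take $Q_{11}=1$, $Q_{22}=q$ slightly above $1$, and $Q_{12}=\rho\neq0$ with $J_2(\bm Q)=2J_1(q)$; set $\beta:=1/J_1(q)$. Then $\bm Q\in\widetilde\Gamma$.

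However, any $\nu$ with $\int\bm x\bm x^\top d\nu=\bm Q$ and $D(\nu\|\mu^{\otimes 2})\le 2/\beta=J_2(\bm Q)$ must be the unique I-projection $\nu^*$. Then $D(\nu^*_2\|\mu)\le 1/\beta=J_1(q)$ would force $\nu^*_2\propto e^{\theta x^2}\mu$. But $\log\frac{d\nu^*_2}{d\mu}(x)=\lambda_{22}x^2+\log\mb E_\mu e^{\lambda_{11}\xi^2+2\lambda_{12}\xi x}+{\rm const}$, and for small $\lambda_{12}\neq0$ the $x^4$ cumulant term makes this non-affine in $x^2$ on the support.

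Since $\Gamma_{k,\beta,\mu}$ is closed for finitely supported $\mu$, we get $d(\bm Q,\Gamma_{k,\beta,\mu})>0$. Yet near $\bm Q$ every block has Cramér exponent at most $|U|/\beta$, so your union bound over $U$ excludes nothing there. Separately, bounded entropy does not give uniform integrability of $\|\bm x\|^2$: for Gaussian $\mu$, mass $\asymp 1/L$ placed where $\xi^2\approx L$ costs $O(1)$ entropy and carries $O(1)$ second moment. So $\Gamma_{k,\beta,\mu}$ need not be closed, which is why the paper claims only total boundedness.

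\textbf{The missing idea.} Bad Gram matrices must be excluded through a single measure that certifies all $U$-constraints simultaneously. The paper does this without any large-deviation upper bound. By the support-function identity $d(\bm G,\overline\Gamma^{(\delta)}_{k,\beta,\mu})=\sup_{\|\bm A\|_{\rm F}\le 1}\{\langle\bm A,\bm G\rangle-h(\bm A)\}$ and a finite net of directions, it suffices to show $q_{\bm A}:=\mb P(\exists S:\langle\bm A,\bm W_S\rangle>h(\bm A)+\varepsilon/2)\to0$.

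If this fails, let $\nu^{\bm A}$ be the law of a uniform row of a uniformly chosen bad $S$, conditioned on the bad event. Its $U$-block over all $p$ rows has density at most $\binom{n}{|U|}/q_{\bm A}$ with respect to $\mu^{\otimes p|U|}$. The chain rule and convexity then give $D(\nu^{\bm A}_U\|\mu^{\otimes|U|})\le(|U|\log n+\log(1/q_{\bm A}))/p$ for every $U$ at once. So $\nu^{\bm A}\in\mathcal M^{(\delta)}_{k,\beta,\mu}$, contradicting $\langle\bm A,\int\bm x\bm x^\top d\nu^{\bm A}\rangle\ge h(\bm A)+\varepsilon/2$.

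Your route might instead be repaired by union-bounding over types of the joint row-empirical measure rather than over Gram blocks. In that case you must also control the weak discontinuity of $\nu\mapsto\int\bm x\bm x^\top d\nu$.
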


As a direct consequence of the second item in Theorem \ref{thm:Wishart-limit}, and arguing as in \eqref{continuous-mapping}, we deduce that
\begin{align*}
    \frac{T_{n,k}}{p} \stackrel{\mb P}{\to} \max_{\bm Q \in \Gamma_{k,\beta, \mu}} \la \lmax \lb \bm Q \rb \ra.
\end{align*}
In general, the convex optimization problem over $\Gamma_{k,\beta,\mu}$ in the last display does not admit a closed-form solution. However, explicit expressions may be available when $\mu$ has a sufficiently regular density. We now consider the case in which $\mu$ is the standard normal distribution.

\begin{co} \label{co:Wishart-Gaussian}
    If $\mu \equiv N(0,1)$, then 
    \[
     \frac{T_{n,k}}{p} \stackrel{\mb P}{\to} \lambda_{k,\beta}
    \]
    where $\lambda_{k,\beta}$ is the unique solution in $(1,\infty)$ of the equation 
    \[
\lambda - \log \lambda -1 =  \frac{2k}{\beta}.
\]
\end{co}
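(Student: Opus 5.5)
By Theorem~\ref{thm:Wishart-limit} and the continuous-mapping argument in \eqref{continuous-mapping}, $T_{n,k}/p \stackrel{\mb P}{\to} \sup_{\bm Q\in\Gamma_{k,\beta,\mu}}\lmax(\bm Q)$. This uses only that $\lmax$ is $1$-Lipschitz and convex, and that $\Gamma_{k,\beta,\mu}$ is convex. So the plan is to show that, for $\mu=N(0,1)$,
\[
\sup_{\nu\in\mc M_{k,\beta,\mu}}\ \max_{\|\bm v\|_2=1}\int (\bm v^\top\bm x)^2\,d\nu(\bm x)=\lambda_{k,\beta},
\]
by proving a matching upper bound and lower bound. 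Set $f(x):=x-1-\log x$. This is the function for which $D\lb N(0,s)\,\|\,N(0,1)\rb=f(s)/2$, and more generally $D\lb N(0,\bm\Sigma)\,\|\,N(0,\bm I)\rb=\tfrac12\lb \operatorname{tr}\bm\Sigma-m-\log\det\bm\Sigma\rb$.

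\emph{Upper bound.} Fix $\nu\in\mc M_{k,\beta,\mu}$ and a unit vector $\bm v$, and let $\rho$ be the law of $y=\bm v^\top\bm x$ under $\nu$. Since $\bm v^\top\bm g\sim N(0,1)$ when $\bm g\sim N(0,\bm I_k)$, the data-processing inequality and the constraint with $U=[k]$ give $D(\rho\|N(0,1))\le D(\nu\|\mu^{\otimes k})\le k/\beta$. Write $s=\int y^2\,d\rho$. Then
\[
D(\rho\|N(0,1))=-h(\rho)+\tfrac{s}{2}+\tfrac12\log(2\pi)\ \ge\ f(s)/2,
\]
because the Gaussian maximizes differential entropy at fixed second moment, $h(\rho)\le\frac12\log(2\pi e s)$. (If $\rho$ has no density the divergence is infinite, so there is nothing to check.) Hence $f(s)\le 2k/\beta$. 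Since $f$ is increasing on $[1,\infty)$, either $s\le1<\lambda_{k,\beta}$ or $s\le\lambda_{k,\beta}$. Taking the supremum over $\bm v$ and $\nu$ gives $\sup\lmax\le\lambda_{k,\beta}$.

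\emph{Lower bound.} Let $\bm w=k^{-1/2}(1,\dots,1)^\top$, put $\lambda=\lambda_{k,\beta}$ and $\bm\Sigma=\bm I+(\lambda-1)\bm w\bm w^\top$, and take $\nu=N(0,\bm\Sigma)$. Then $\int\bm x\bm x^\top d\nu=\bm\Sigma$ and $\lmax(\bm\Sigma)=\lambda$, so it only remains to check that $\nu\in\mc M_{k,\beta,\mu}$. For a nonempty $U$ with $t=|U|/k$, the marginal covariance is $\bm\Sigma_U=\bm I_U+(\lambda-1)t\,\bm u\bm u^\top$ for a unit vector $\bm u$. Therefore
\[
D(\nu_U\|\mu^{\otimes|U|})=\tfrac12 f\bigl(1+(\lambda-1)t\bigr)=\tfrac12 f\bigl((1-t)\cdot1+t\lambda\bigr)\le\tfrac t2 f(\lambda)=\tfrac{t k}{\beta}=\tfrac{|U|}{\beta},
\]
using convexity of $f$ and $f(1)=0$. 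So $\nu$ is feasible and $\sup\lmax\ge\lambda_{k,\beta}$.

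Uniqueness of $\lambda_{k,\beta}$ in $(1,\infty)$ follows because $f$ is strictly increasing there, from $0$ to $\infty$. The main point needing care is the upper bound, specifically the reduction to one dimension via data processing combined with the Gaussian entropy bound. I also need to make sure the continuous-mapping step applies to a supremum over the possibly non-closed set $\Gamma_{k,\beta,\mu}$. That is harmless, since Hausdorff distance to a set and to its closure coincide.
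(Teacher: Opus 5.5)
Your proof is correct. The continuous-mapping reduction and the lower bound are the same as the paper's. The lower bound uses the Gaussian $N(\bm 0,\bm I_k+(\lambda_{k,\beta}-1)\bm v_\star\bm v_\star^\top)$, with the marginal constraints checked by convexity of $x-\log(1+x)$.

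The upper bound is where you differ from the paper's proof of this corollary, though the difference is modest.
\begin{itemize}
\item \textbf{Paper's route.} It does not project to one dimension. It applies the Gibbs/Donsker--Varadhan inequality \eqref{Jesen} directly to $\nu$ with the tilt $t(\bm v^\top\bm x)^2$. This gives $t\,\bm v^\top\mfQ(\nu)\bm v\le k/\beta-\tfrac12\log(1-2t)$. It then minimizes over $t\in(0,1/2)$ and identifies the minimizer through $\lambda_{k,\beta}=1/(1-2t_0)$.
\item \textbf{Your route.} You use data processing followed by the Gaussian maximum-entropy bound.
\item \textbf{Relation between them.} The two bounds are Legendre duals. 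Indeed $\sup_{t<1/2}\{ts+\tfrac12\log(1-2t)\}=\tfrac12 f(s)$ is the Cram\'er rate function of $\chi^2_1$. So optimizing the paper's inequality over $t$ gives exactly your inequality $f(s)\le 2k/\beta$.
\item \textbf{What each buys.} Your version skips the calculus in $t$. Because it controls $f(\bm v^\top\mfQ\bm v)$ rather than only bounding $\bm v^\top\mfQ\bm v$ from above, it also gives the lower eigenvalue bound $\lambda_-$. The paper in fact switches to your data-processing/maximum-entropy argument in the proof of Corollary~\ref{co:restricted-isometry}, where that two-sided control is needed. The paper's route, for its part, needs no differential entropy or existence-of-density bookkeeping, only the moment generating function of $\chi^2_1$.
\end{itemize}

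On your closing concern about sup versus max: in the Gaussian case the supremum over $\Gamma_{k,\beta,\mu}$ is attained by your $\bm\Sigma$, so the distinction does not matter here.
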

Readers may wonder why Theorem \ref{thm:Wishart-limit} does not cover the cases $\beta=0$ and $\beta=\infty$. The LLN for $T_{n,k}$ is different in these two cases and can be derived using simple arguments. If $p/\log n \to \infty$, then
\[
\frac{T_{n,k}}{p} \stackrel{\mb P}{\to} 1.
\]
This convergence is universal: it holds for every law $\mu$ satisfying \eqref{regularity conds}. It can be proved using a simple union bound to show that
\[
\sup_{|S|=k} \left\| \frac{\bm X_{[S]}^\top  \bm X_{[S]}   }{p} - \bm I_n  \right\|_\infty \stackrel{\mb P}{\to} 0.
\]
We next compare our results with Theorem 1 in \cite{cai2021asymptotic}. Written in the same notation, Theorem 1 in \cite{cai2021asymptotic} implies that
\[
\frac{T_{n,k}}{p} \stackrel{\mb P}{\to} 1 \qquad \text{if $\sqrt{p}/ \log n \to \infty$}.
\]
In particular, our results recover the preceding limit under the weaker condition $p/\log n \to \infty$ and give a first-order limit for the more difficult regime in which $p$ grows on the same scale as $\log n$.
In the case $p / \log n \to 0$, the limit depends on the law $\mu$. When the tail behavior of $\mu$ is sufficiently regular, we can show the following.
\begin{thm} \label{p/logn to 0}
    Suppose $p/\log n \to 0$ and 
    \[
    \kappa_\mu:= \lim_{x \to \infty} \frac{-\log \mb P \lb \xi^2>x \rb}{x} \in (0,\infty).
    \]
    Then
    \[
    \frac{T_{n,k}}{k\log n} \stackrel{\mb P}{\to} \frac{1}{\kappa_\mu}.
    \]
\end{thm}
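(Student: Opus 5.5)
We prove matching upper and lower bounds for $T_{n,k}/(k\log n)$. Throughout write $L:=\log n$. Note that $p=o(L)$, and $p$ may even stay bounded.

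\emph{Lower bound.} Take $S$ to be a set of $k$ columns that all share a single large entry in row $1$. Concretely, split $[n]$ into $k$ disjoint blocks $B_1,\dots,B_k$ of size $\lfloor n/k\rfloor$. In block $B_r$ choose the column $j_r$ that maximizes $\xi_{1j}^2$. By the definition of $\kappa_\mu$, for every fixed $\varepsilon>0$ we have $\mb P(\xi^2>(1-\varepsilon)L/\kappa_\mu)\ge n^{-(1-\varepsilon/2)}$ for large $n$. The maximum over $n/k$ independent copies therefore exceeds $(1-\varepsilon)L/\kappa_\mu$ with probability at least $1-(1-n^{-1+\varepsilon/2})^{n/k}\to1$. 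Now take the unit vector $\bm x=k^{-1/2}(\operatorname{sign}\xi_{1j_r})_{r\le k}$ supported on $S=\{j_1,\dots,j_k\}$. Then
\[
\lmax\bigl(\bm X_{[S]}^\top\bm X_{[S]}\bigr)\ \ge\ \|\bm X_{[S]}\bm x\|_2^2\ \ge\ \Bigl(\tfrac1{\sqrt k}\sum_r|\xi_{1j_r}|\Bigr)^2\ \ge\ k(1-\varepsilon)L/\kappa_\mu ,
\]
which gives the lower bound.

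\emph{Upper bound.} We have $\lmax(\bm X_{[S]}^\top\bm X_{[S]})\le \operatorname{tr}=\sum_{j\in S}\sum_{i\le p}\xi_{ij}^2$. Hence
\[
T_{n,k}\le \text{sum of the $k$ largest values of } Y_j:=\sum_{i\le p}\xi_{ij}^2 ,
\]
and it suffices to show $\max_j Y_j\le(1+\varepsilon)L/\kappa_\mu$ with high probability. This is a union bound over $n$ columns, so we need the tail estimate
\[
\mb P\bigl(Y_1>(1+\varepsilon)L/\kappa_\mu\bigr)=o(n^{-1}).
\]
We use an exponential Chernoff bound. For any $t<\kappa_\mu$ we have $\mb E e^{t\xi^2}<\infty$: the tail is $e^{-(\kappa_\mu-o(1))x}$, which also forces $t_0\le\kappa_\mu$. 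Take $t=\kappa_\mu(1-\delta)$. Then
\[
\mb P(Y_1>y)\le (\mb E e^{t\xi^2})^p e^{-ty}=\exp\bigl(pC_\delta-\kappa_\mu(1-\delta)y\bigr).
\]
With $y=(1+\varepsilon)L/\kappa_\mu$ and $p=o(L)$, the exponent equals $-(1-\delta)(1+\varepsilon)L+o(L)$. Choosing $\delta$ small enough that $(1-\delta)(1+\varepsilon)>1$, this is $o(n^{-1})$ after multiplying by $n$.

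The main obstacle is precisely this tail step. The constant $C_\delta=\log\mb E e^{\kappa_\mu(1-\delta)\xi^2}$ may blow up as $\delta\to0$, but it is fixed once $\varepsilon$ is fixed, so the condition $p/L\to0$ absorbs it. This is exactly where $p=o(\log n)$ is used: the $p$ summands cost only $o(L)$ in the exponent, so the sum of squares in a column behaves like a single $\xi^2$ at the scale $L$. One also needs to confirm that the trace bound loses nothing, and it does not, because the lower bound is achieved by a rank-one configuration concentrated in a single row.
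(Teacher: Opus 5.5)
Your proof is correct and follows essentially the same route as the paper. For the upper bound, both use the trace bound $T_{n,k}\le k\max_j\sum_{i}\xi_{ij}^2$ together with a Chernoff and union bound over the $n$ columns, with $t<\kappa_\mu$ and $p=o(\log n)$ absorbing the factor $(\mb E e^{t\xi^2})^p$. For the lower bound, both use a rank-one configuration of $k$ columns whose first-row entries all exceed $(1-\varepsilon)\log n/\kappa_\mu$. The differences are cosmetic: you locate those columns via block maxima rather than a binomial count with Chebyshev, and you test with a sign vector rather than using $\bm X_{[S]}^\top\bm X_{[S]}\succeq \bm Y_{1,S}\bm Y_{1,S}^\top$.
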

It is not difficult, albeit more technically involved, to modify the proof of Theorem \ref{p/logn to 0} to obtain analogous results under sub-Weibull type conditions. We do not pursue this technical extension in the current paper and leave it as potential future work. 

{
\section{Proof technique and an application} \label{sec:app}

\subsection{Proof technique}

We first use the Gaussian Wigner matrix to illustrate the main idea. We seek to identify the deterministic set to which the random sets formed by the rescaled principal minors converge in Hausdorff distance. Suppose $\bm B \in \mb R^{k \times k}$ is a candidate element of this limiting set. Rescale $\bm A$ by $\bm A/q_n$, where $q_n=\sqrt{2\log n}$, so that all entries of the rescaled matrix are of order $O_{\mb P}(1)$.

For $S\subset [n]$ with $|S|=k$, it is easy to check that
\begin{align} \label{gaussian}
    \mb P\left(
        \frac{\bm A_S}{q_n}\approx\bm B
    \right)
    =
    n^{- \mc I_{S,a}(\bm B)+o(1)},
\end{align}
where
\[
    \mc I_{S,a}(\bm B)
    :=
    \frac1a\sum_{i\in S}B_{ii}^2
    +
    \sum_{\substack{i<j\\i,j\in S}}B_{ij}^2.
\]
There are $\Theta \lb n^k \rb$ such principal minors $\bm A_S$, so
\[
\mb P \lb \exists S: |S|=k \,\, \text{and} \,\, \frac{\bm A_S}{q_n}\approx\bm B \rb \approx n^{k-  \mc I_{S,a}(\bm B)+o(1)}.
\]
For the above probability to be $1+o(1)$, we must have
\[
k \geq  \mc I_{S,a}(\bm B).
\]
However, the preceding constraint is not sufficient: if $\bm A_S/q_n$ is close to $\bm B$, then all principal minors of $\bm A_S$ are also close to the corresponding principal minors of $\bm B$. Take $U \subset [k]$ with $|U|= r \leq k$. Then
\[
\mb P \lb \exists V: |V|=r \,\, \text{and} \,\, \frac{\bm A_V}{q_n}\approx\bm B_U \rb \approx n^{r-  \mc I_{U,a}(\bm B)+o(1)}.
\]
For the probability above to be $1+o(1)$, it is necessary that
\[
    \mc I_{U,a}(\bm B)\leq |U|
    \qquad
    \text{for every }\varnothing\neq U\subseteq[k].
\]
This is precisely the constraint in \eqref{eq:feasible-set}. The preceding heuristic can be made rigorous via the second-moment method. In the proof of the lower bound, we use a blocking construction that substantially simplifies the argument compared with that of \cite{cai2021asymptotic}.

In the Wishart case, we replace the approximation \eqref{gaussian} by Sanov's theorem; see Section 6.2 in \cite{dembo2010large} for more details. Technical complications arise because the KL divergence does not have a closed-form expression analogous to that of the convex functional $\mc I_{U,a}$ above. We handle these difficulties using a combinatorial argument; see the proof of the upper bound in Section \ref{sec:upper-Wishart} below.

\subsection{An application to compressed sensing}
The Hausdorff convergence in Theorem \ref{thm:Wishart-limit} yields a sharp asymptotic threshold for the restricted isometry property of the normalized design matrix $\bm X/\sqrt p$ in compressed sensing. For a fixed order $k$, define
\begin{align} \label{RIP-constant}
    \delta_k \lb \bm X/\sqrt p \rb
    := \max_{\substack{S\subset[n]\\ |S|=k}}
    \left\| \frac{\bm X_{[S]}^\top \bm X_{[S]}}{p}-\bm I_k \right\|_{\op}.
\end{align}
The maximum over supports of size exactly $k$ agrees with the usual maximum over supports of size at most $k$, because any smaller support can be enlarged to a $k$-element set. Equivalently, $\delta_k \lb \bm X/\sqrt p \rb$ is the smallest $\delta\geq 0$ such that
\[
    \lb 1-\delta \rb \|\bm u\|_2^2
    \leq \frac{\|\bm X\bm u\|_2^2}{p}
    \leq \lb 1+\delta \rb \|\bm u\|_2^2
\]
for every $\bm u\in\mb R^n$ satisfying $\|\bm u\|_0\leq k$.
As a consequence of Theorem \ref{thm:Wishart-limit}, we can prove that

\begin{co}[Restricted isometry threshold] \label{co:restricted-isometry}
Let $k\geq 1$ be fixed. Suppose that $p/\log n\to\beta\in\lb 0,\infty\rb$ and that \eqref{regularity conds} holds. Then
\[
    \delta_k \lb \bm X/\sqrt p \rb
    \stackrel{\mb P}{\to}
    \Delta_{k,\beta,\mu}
    :=\sup_{\bm Q\in\Gamma_{k,\beta,\mu}}
    \|\bm Q-\bm I_k\|_{\op}.
\]
If $\mu=N(0,1)$ and $\lambda_{k,\beta}>1$ is the unique solution of
\[
    \lambda_{k,\beta}-\log\lambda_{k,\beta}-1=\frac{2k}{\beta},
\]
then
\[
    \Delta_{k,\beta,\mu}=\lambda_{k,\beta}-1.
\]
Consequently, for every fixed $\delta\in(0,1)$,
\[
\mb P\left(\delta_k\lb\bm X/\sqrt p\rb\leq\delta\right)
\to
\begin{cases}
1, & \displaystyle \beta>\frac{2k}{\delta-\log\lb 1+\delta\rb},\\[2mm]
0, & \displaystyle \beta<\frac{2k}{\delta-\log\lb 1+\delta\rb}.
\end{cases}
\]
\end{co}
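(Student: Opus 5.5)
The corollary has three parts: convergence of $\delta_k$ to $\Delta_{k,\beta,\mu}$, the evaluation of $\Delta$ in the Gaussian case, and the threshold statement. All three will come from Theorem \ref{thm:Wishart-limit}.

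\textbf{Step 1: convergence of $\delta_k$.} The functional $F(\bm Q):=\|\bm Q-\bm I_k\|_{\op}$ is $1$-Lipschitz with respect to the Frobenius metric, since $\|\cdot\|_{\op}\le\|\cdot\|_F$. Hence for any two nonempty bounded sets $A,B$ we have
\[
\Bigl|\sup_{A}F-\sup_{B}F\Bigr|\le d_{\rm H}(A,B).
\]
Here $\delta_k(\bm X/\sqrt p)=\sup_{\mc C^{\rm Wishart}_{n,k}}F$. Since $\Gamma_{k,\beta,\mu}$ is totally bounded, $\Delta_{k,\beta,\mu}$ is finite. The second item of Theorem \ref{thm:Wishart-limit} then gives $\delta_k\stackrel{\mb P}{\to}\Delta_{k,\beta,\mu}$. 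This argument needs neither convexity nor Lemma \ref{convexity}.

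\textbf{Step 2: the Gaussian value.} Write
\[
\|\bm Q-\bm I\|_{\op}=\max\{\lmax(\bm Q)-1,\;1-\lambda_{\min}(\bm Q)\}.
\]
Corollary \ref{co:Wishart-Gaussian}, via the same Lipschitz argument, identifies $\sup_{\Gamma}\lmax=\lambda_{k,\beta}$. It therefore suffices to show $1-\lambda_{\min}(\bm Q)\le\lambda_{k,\beta}-1$ for every $\bm Q\in\Gamma$.

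Take $\nu\in\mc M_{k,\beta,\mu}$ and a unit vector $\bm u$. Let $\nu_{\bm u}$ be the law of $\langle \bm u,\bm x\rangle$ under $\nu$. By the data-processing inequality, $D(\nu_{\bm u}\|N(0,1))\le D(\nu\|N(0,1)^{\otimes k})\le k/\beta$. The Gaussian bound $D(\rho\|N(0,1))\ge \tfrac12(s-\log s-1)$ with $s=\int y^2d\rho$, attained by $N(0,s)$, then gives
\[
s-\log s-1\le 2k/\beta, \qquad s=\bm u^\top\bm Q\bm u .
\]
So every Rayleigh quotient of $\bm Q$ lies in $[\ell,\lambda_{k,\beta}]$, where $\ell<1$ is the smaller root of $\lambda-\log\lambda-1=2k/\beta$.

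The one real check is that $1-\ell\le\lambda_{k,\beta}-1$. Put $\phi(\lambda)=\lambda-\log\lambda-1$ and set $t=1-\ell\in(0,1)$. Then
\[
\phi(1+t)-\phi(1-t)=2t-\log\frac{1+t}{1-t}<0,
\]
because $\log\frac{1+t}{1-t}=2t+\tfrac{2t^3}{3}+\cdots$. Thus $\phi(1+t)<\phi(1-t)=2k/\beta$. Since $\phi$ is increasing on $(1,\infty)$, this forces $\lambda_{k,\beta}>1+t$. Hence $\Delta_{k,\beta,\mu}=\lambda_{k,\beta}-1$. The upper-bound direction is also immediate from the attained sup of $\lmax$. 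This comparison of the two roots is the only step needing care.

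\textbf{Step 3: the threshold.} If $\lambda_{k,\beta}-1<\delta$, the probability tends to $1$ by Step 1. If $\lambda_{k,\beta}-1>\delta$, it tends to $0$. Since $\phi$ is increasing on $(1,\infty)$, the condition $\lambda_{k,\beta}<1+\delta$ is equivalent to $2k/\beta<\phi(1+\delta)=\delta-\log(1+\delta)$, that is, $\beta>2k/(\delta-\log(1+\delta))$. The reverse inequality is handled the same way, which gives the stated dichotomy.
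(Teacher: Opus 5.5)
Your proposal is correct and follows essentially the same route as the paper: the $1$-Lipschitz/Hausdorff comparison for the first assertion, then data processing plus the Gaussian maximum-entropy bound to confine every Rayleigh quotient to $[\ell,\lambda_{k,\beta}]$, then the comparison $\phi(1+t)<\phi(1-t)$ of the two roots, and finally monotonicity of $\phi$ on $(1,\infty)$ for the threshold. The differences are cosmetic: you get the matching bound $\Delta_{k,\beta,\mu}\geq\lambda_{k,\beta}-1$ from Corollary~\ref{co:Wishart-Gaussian} (i.e.\ $\sup_{\Gamma_{k,\beta,\mu}}\lmax=\lambda_{k,\beta}$) instead of re-verifying $N(\bm 0,\bm Q_\star)\in\mc M_{k,\beta,\mu}$ as the paper does, and you rightly note that Lemma~\ref{convexity} is not actually needed for the first step.
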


We now discuss the implications of Corollary \ref{co:restricted-isometry} for the restricted isometry threshold. The matrix $\bm X/\sqrt p$ may be viewed as a sensing matrix with $p$ measurements in ambient dimension $n$. Fix an integer $s\geq1$. A deterministic theorem of \citet{cai2014sparse} shows that the condition $\delta_{2s}<1/\sqrt2$ guarantees uniform exact recovery of an $s$-sparse signal by basis pursuit. Applying Corollary \ref{co:restricted-isometry} with $k=2s$, we define
\[
    \beta_{\rm BP}(s)
    :=\frac{4s}{2^{-1/2}-\log\lb1+2^{-1/2}\rb}
    \approx 23.2144\,s.
\]
If $\mu=N(0,1)$ and $p/\log n\to\beta$ for some $\beta>\beta_{\rm BP}(s)$, then $\delta_{2s}\lb\bm X/\sqrt p\rb<1/\sqrt2$ with probability tending to one. Consequently, a logarithmic number of Gaussian measurements suffices for uniform basis-pursuit recovery at any fixed sparsity level, with an explicit sufficient leading constant.
}

\section{Discussion and remarks} \label{sec:discussion}

In this paper, we have developed a new technique for establishing the
first-order limits of the largest eigenvalues among all principal minors
of Wishart matrices and Gaussian Wigner matrices. Our technique exploits
the convergence, in Hausdorff distance, of the random sets formed by the
principal minors. We expect this technique to apply beyond the two types
of random matrices considered here. We now make
several remarks regarding possible extensions for future research.

\begin{itemize}
    \item Although we show that the limit $\mc E(c)$ exists in
    Theorem~\ref{c>0}, its properties are not well understood apart from its
    differentiability at $1$ and non-differentiability at $0$. We expect
    tools from spin glass theory \cite{chen2013aizenman,chen2017parisiSpherical} to be useful for understanding the
    behavior of $\mc E(c)$.

    \item The regime $k/n\to c\in(0,1]$ for Gaussian Wishart matrices
    seems to be completely different from that for Gaussian Wigner
    matrices. In particular, a direct Gaussian comparison argument does
    not work. It would be interesting to prove and characterize the
    first-order limit in this case.

    \item Studying the limiting distributions of $T_{n,k}$ and
    $M_{n,k}$ is an important direction for future research. Our results
    for $M_{n,k}$ partially explain why establishing the limiting
    distribution for Gaussian Wigner matrices is much more difficult
    when the diagonal variance is greater than two: the nested structure
    implies that the extremal principal minors of different sizes form a
    nested chain, and the standard Poisson approximation argument, as in
    \cite{arratia1989two,feng2026principal}, does not capture such a
    dependence structure.
\end{itemize}

\section{Proof of Theorem \ref{thm:master}}

To prove Theorem \ref{thm:master}, it suffices to prove that
\begin{align} \label{upper}
 \sup_{\bm C \in \mc C_{n,k}} d \lb \bm C, \mc K_k(a) \rb =  \sup_{\bm C \in \mc C_{n,k}} \inf_{\bm H \in \mc K_{k}(a)} \| \bm C - \bm H \|_F \stackrel{\mb P}{\to} 0
\end{align}
and
\begin{align} \label{lower}
   \sup_{\bm H \in \mc K_k(a)} d \lb \mc C_{n,k}, \bm H \rb = \sup_{\bm H \in \mc K_{k}(a)} \inf_{\bm C \in \mc C_{n,k}} \| \bm C - \bm H \|_F \stackrel{\mb P}{\to} 0.
\end{align}

\noindent \underline{\it Proof of \eqref{upper}.} We first show that for every $\ve >0$, 
\begin{equation}\label{eq:uniform-feasibility}
 \Pp\left(
 \mathcal I_{T,a}\!\left(\frac{A_T}{q_n}\right)
 \le |T|+\varepsilon
 \text{ for all }T\subset[n]\text{ with }1\le|T|\le k
 \right)\to 1.
\end{equation}
To prove \eqref{eq:uniform-feasibility}, fix $s\le k$ and a set $T\subset[n]$ with $|T|=s$. Then
\[
 q_n^2\cdot \mathcal I_{T,a}\!\left(\frac{A_T}{q_n}\right)
 =\frac1a\sum_{i\in T}A_{ii}^2
  +\sum_{\substack{i<j\\i,j\in T}}A_{ij}^2
 \sim\chi^2_{d_s},
 \qquad d_s=\frac{s(s+1)}2.
\]
For a fixed $d$, the chi-square tail satisfies
\[
 \Pp(\chi_d^2>x)\le C_d x^{d/2}e^{-x/2}
\]
for all sufficiently large $x$.

Since $q_n^2=2\log n$, we have 
\[
 \Pp\left(
 \mathcal I_{T,a}\!\left(\frac{A_T}{q_n}\right)>s+\varepsilon
 \right)
 \le C_s \cdot (\log n)^{d_s} \cdot n^{-(s+\varepsilon)}
\]
where
\[
C_s:= 2^{s(s+1)/2}\cdot (s+\ve)^{s(s+1)/2}.
\]
There are at most $n^s$ choices of $T$ for which $|T|=s$, so a union bound gives
\begin{align*}
    \mb P \lb \exists T: I_{T,a}\!\left(\frac{A_T}{q_n}\right)
 \geq |T|+\varepsilon   \rb 
 \leq \sum_{s=1}^k n^s\cdot  C_s \cdot (\log n)^{d_s} \cdot n^{-(s+\varepsilon)} \to 0.
\end{align*}
This proves \eqref{eq:uniform-feasibility}. 

Now, with $\mc I_{U,a}$ as in \eqref{eq:cost}, define
\[
\mc K^{\delta}_{k}(a):= \la \bm B= \bm B^\top\in\R^{k \times k}:
 \mathcal I_{U,a}(\bm B)\le (1+\delta) \cdot |U|\text{ for every }\varnothing\ne U\subset[k]
 \ra.
\]

By \eqref{eq:uniform-feasibility}, for every fixed $\delta>0$,
$
    \mb P\left(
        \mc C_{n,k}\subseteq\mc K_k^\delta(a)
    \right)\to1.
$
Hence
\begin{align*}
\mb P\left(
    \sup_{\bm C\in\mc C_{n,k}}
    d\left(\bm C,\mc K_k(a)\right)>\ve
\right)
\leq
\mb P\left(
    \mc C_{n,k}\not\subseteq\mc K_k^\delta(a)
\right)
+
\mathbf 1\left\{
    \sup_{\bm B\in\mc K_k^\delta(a)}
    d\left(\bm B,\mc K_k(a)\right)>\ve
\right\}.
\end{align*}
Since
$
    \mc K_k^\delta(a)
    =
    \sqrt{1+\delta}\,\mc K_k(a)
$
and $\mc K_k(a)$ is compact,
\[
    \sup_{\bm B\in\mc K_k^\delta(a)}
    d\left(\bm B,\mc K_k(a)\right)
    \leq
    \left(\sqrt{1+\delta}-1\right)
    \cdot 
    \sup_{\bm H\in\mc K_k(a)}\|\bm H\|_{\rm F}
    \to0
\]
as $\delta\to0$. Therefore, we get \eqref{upper} by first letting $n \to \infty$ and then letting $\delta \to 0$.

\noindent \underline{ \it Proof of \eqref{lower}.} We first show that if a deterministic matrix $\bm B$ of size $k\times k$ satisfies
\begin{equation}\label{eq:strict-feasibility-B}
    \min_{\varnothing\ne U\subseteq[k]}
    \left\{
        |U|-\mathcal I_U(\bm B)
    \right\}
    \geq \zeta >0
\end{equation}
then, for any fixed $\eta>0$,
\begin{align} \label{lower-1}
    \mb P \lb d \lb \bm B, \mc C_{n,k} \rb > \eta \rb \to 0.
\end{align}
Take $\ve:=\eta/k$. We partition $[n]$ into $k$ consecutive blocks. Specifically, define
\[
m_n:= \lfloor n/k \rfloor, \qquad V_{n,r}:= \la (r-1)m_n+1,\dots, rm_n  \ra, \qquad r=1,2,\dots,k.
\]
and
\begin{align} \label{V-tilde}
\widetilde{V}:= V_{n,1}\times\dots\times V_{n,k}.
\end{align}
For a tuple $\bm i= \lb i_1,\dots,i_k \rb \in \widetilde{V}$, we have $i_1<i_2<\dots<i_k$. For such a tuple $\bm i = \lb i_1,\dots, i_k \rb \in \widetilde{V}$ and a symmetric matrix $\bm B$ of size $k$, define the event
\[
E\lb \bm i, \bm B, \ve \rb:= \bigcap_{r=1}^k \la \left| \frac{\bm A_{i_r i_r}}{q_n} - \bm B_{rr} \right| \leq \ve  \ra \cap \bigcap_{1 \leq r<s\leq k} \la  \left| \frac{\bm A_{i_r i_s}}{q_n} - \bm B_{rs} \right| \leq \ve \ra.
\]
This is the event that the principal minor $\bm A_{\la i_1,\dots,i_k \ra}$ lies within a box of size $\ve$ around a deterministic symmetric matrix $\bm B$. Define the counting random variable
\[
N_n=N_n \lb \bm B, \ve \rb:= \sum_{\bm i \in \widetilde{V} } \mathbf{1}_{ E \lb \bm i, \bm B, \ve \rb}.
\]
To deduce \eqref{lower-1}, it suffices to show that $\mb P \lb N_n =0 \rb \to 0$. Indeed, given this statement, with probability tending to one, we can find a tuple $\bm i \in \widetilde{V}$ such that
\[
\left\| \frac{\bm A_{\la i_1,\dots,i_k \ra}}{q_n} - \bm B \right\|_\infty \leq \ve,
\]
which in turn yields
\[
d \lb \bm B, \mc C_{n,k} \rb \leq \left\| \frac{\bm A_{\la i_1,\dots,i_k \ra}}{q_n} - \bm B \right\|_F \leq \sqrt{k^2\ve^2} = k\ve =  \eta.  
\]
Therefore, we only need to show $\mb P \lb N_n =0 \rb \to 0$. By the Paley--Zygmund inequality,
\[
\mb P \lb N_n=0 \rb \leq \frac{\mbox{Var} \lb N_n \rb}{\lb \mb E N_n \rb^2} =  \frac{\mb E \lb N_n^2 \rb}{\lb \mb E N_n \rb^2}-1 .
\]
To bound the right-hand side of the display above, put 
\begin{align} \label{p_n}
p_n\lb \bm i, \bm B, \ve \rb:= \mb P \left[ E\lb \bm i, \bm B, \ve \rb \right].
\end{align}
Then, for any fixed $\bm i^* \in \widetilde{V}$, we have  
\[
\mb E N_n = \# \la \text{all tuples in $\widetilde{V}$ } \ra \cdot p_n\lb \bm i^*, \bm B, \ve \rb = m_n^k \cdot p_n\lb \bm i^{*}, \bm B, \ve \rb \cdot (1+o(1)).
\]
Thus, by Lemma \ref{Gaussian bound} and \eqref{eq:strict-feasibility-B}, 
\begin{align*}
    \log \mb E \lb N_n \rb &= k \cdot \log \lb m_n \rb + \log p_n\lb \bm i, \bm B, \ve \rb + o(1) \\
    &\geq k\cdot \lb \log n - \log 2k \rb - \mc I_{\la i_1,\dots, i_k \ra} \lb \bm B \rb \cdot \log n - C_{\bm B,k,a,\ve} \sqrt{\log n} + O(1) \\
    &\geq \log n \cdot \left[ k - \mc I_{\la i_1,\dots, i_k \ra} \lb \bm B \rb \right] + O(\sqrt{\log n}) \\
    &\geq \zeta\cdot \log n +  O(\sqrt{\log n})  \to \infty.
\end{align*}
Therefore,
\[
\mb E N_n \to \infty. 
\]
To bound the second-moment term, write
\begin{align*}
\mb E \lb N_n^2 \rb = \mb E N_n + \sum_{\bm i \in \widetilde{V}, \bm j \in \widetilde{V}, \bm i \neq \bm j} \mb P \left[
E\lb \bm i, \bm B, \ve \rb \cap E\lb \bm j, \bm B, \ve \rb
\right]
\end{align*}
Write $E_{\bm i} = E\lb \bm i, \bm B, \ve \rb$ and $E_{\bm j} = E\lb \bm j, \bm B, \ve \rb$ for shorthand. Put
\[
U=U \lb \bm i, \bm j \rb:= \la r \in [n]: i_r = j_r \ra. 
\]
Due to the blocking structure of $\widetilde{V}$, any overlap between $\bm i$ and $\bm j$ must occur at exactly the same coordinate and hence belongs to $U$. We denote by $G_U$ the principal minor formed by $U$. We can decompose the events as follows (see Figure \ref{fig:overlap-factorization} for an illustration):
\[
E_{\bm i} \cap E_{\bm j} = G_U, \qquad E_{\bm i} \cup E_{\bm j} = H_i \cup H_j \cup G_U.
\]
Note that $H_i, H_j, G_U$ are mutually disjoint. Consequently,
\begin{align*}
\mb P \left[ E\lb \bm i, \bm B, \ve \rb \cap E\lb \bm j, \bm B, \ve \rb \right]
= \mb P \lb E_{\bm i} \cap E_{\bm j} \rb
&= \mb P \lb H_{\bm i} \rb \cdot \mb P \lb H_{\bm j} \rb \cdot \mb P \lb G_U \rb \\
&=   \frac{\mb P \lb H_{\bm i} \rb \cdot \mb P \lb G_U \rb \cdot  \mb P \lb H_{\bm j} \rb \cdot  \mb P \lb G_U \rb}{\mb P \lb G_U \rb} \\
&= \frac{p_n \lb \bm i, \bm B, \ve \rb^2}{p_n \lb U(\bm i, \bm j) , \bm B, \ve \rb}.
\end{align*}
Here $\mb P \lb H_{\bm i} \rb$ is the probability that all the constraints are satisfied after restricting to the matrix $H_{\bm i}$.

\begin{figure}[t]
\centering
\begin{tikzpicture}[
    x=0.9cm,
    y=0.9cm,
    font=\small,
    every node/.style={inner sep=2pt}
]

\fill[orange!28] (0,0) rectangle (4,4);

\fill[green!25] (2,-2) rectangle (6,2);

\fill[blue!30] (2,0) rectangle (4,2);

\draw[line width=.9pt] (0,0) rectangle (4,4);
\draw[line width=.9pt] (2,-2) rectangle (6,2);

\foreach \t in {1,2,3} {
    \draw[gray!50, line width=.35pt] (\t,0)--(\t,4);
    \draw[gray!50, line width=.35pt] (0,\t)--(4,\t);
    \draw[gray!50, line width=.35pt] (2+\t,-2)--(2+\t,2);
    \draw[gray!50, line width=.35pt] (2,-2+\t)--(6,-2+\t);
}

\draw[line width=.7pt] (2,0) rectangle (4,2);

\node[font=\bfseries] at (2,4.45) {$E_{\bm i}$};
\node[font=\bfseries] at (4,-2.45) {$E_{\bm j}$};

\node at (1.0,3.0) {$H_{\bm i}$};
\node at (3.0,1.0) {$G_U$};
\node at (5.0,-1.0) {$H_{\bm j}$};

\draw[->, line width=.6pt] (3.0,2.55) -- (3.0,1.5);
\node[align=center] at (3.0,2.9) {common entries};





\end{tikzpicture}
\caption{
Decomposition of two overlapping pattern events.
The event \(G_U\) corresponds to the constraints coming from the shared
\(U\times U\) block, while \(H_{\bm i}\) and \(H_{\bm j}\) contain
the remaining constraints in \(E_{\bm i}\) and \(E_{\bm j}\), respectively.
}
\label{fig:overlap-factorization}
\end{figure}

For a fixed $U \subset [k]$, the number of ordered pairs $\lb \bm i, \bm j \rb \in \widetilde{V}^2$ satisfying $U \lb \bm i, \bm j \rb= U$ is no larger than
\[
m_n^{|U|} \cdot m_n^{2(k-|U|)} = m_n^{2k-|U|}.
\]
Consequently, since $\mb E N_n \to \infty$,
\begin{align*}
    \frac{\mb E \lb N_n^2 \rb}{\lb \mb E N_n \rb^2}-1 &\leq  \frac{1}{\mb E N_n} +  \frac{1+o(1)}{m_n^{2k} \cdot  p_n^2\lb \bm i^{*}, \bm B, \ve \rb } \cdot 
   \sum_{l=1}^k \sum_{|U|=l} \frac{p_n^2 \lb \bm i^*, \bm B, \ve \rb}{ p_n \lb U, \bm B, \ve \rb} \\
   &\leq o(1) + \left[ 1 +o(1) \right] \cdot \frac{1}{m_n^{2k}} \cdot \sum_{l=1}^k \frac{m_n^{2k-l}}{ \min_{|U|=l} \,  p_n \lb U, \bm B, \ve \rb } \\
   &\leq  o(1) + \left[ 1 +o(1) \right] \cdot \sum_{l=1}^k \frac{m_n^{-l}}{\min_{|U|=l} \,  p_n \lb U, \bm B, \ve \rb}.
\end{align*}
To bound the last sum, note that for any $U \subset [k]$ with $|U|=l$, by Lemma \ref{Gaussian bound} and \eqref{eq:strict-feasibility-B}, we have 
\begin{align*}
    \frac{m_n^{-l}}{  p_n \lb U, \bm B, \ve \rb} &= \exp \la -l\cdot \log m_n - \log p_n \lb U, \bm B, \ve \rb \ra \\
    &\leq  \exp \la -l\cdot \log n + \mc I_{U} \lb \bm B \rb \cdot \log n + O \lb \sqrt{\log n} \rb   \ra \\
    &\leq \exp \la -\zeta \cdot \log n  + O \lb \sqrt{\log n} \rb \ra \to 0.
\end{align*}
The above bound is independent of $U$, so 
\[
    \frac{\mb E \lb N_n^2 \rb}{\lb \mb E N_n \rb^2}-1 \to 0.
\]
Putting everything together, we conclude that $\mb P \lb N_n =0 \rb \to 0$. To finish the proof, we use a standard density argument to replace $\zeta$ in \eqref{eq:strict-feasibility-B} by zero: any $\bm B  \in \mc K_k(a)$ can be approximated by $\bm B_\delta = (1-\delta) \cdot  \bm B$; note that
\[
\min_{U \subset [k], U \neq \emptyset} \la |U| -  \mc I_U \lb \bm B_\delta \rb \ra = \min_{U \subset [k], U \neq \emptyset} \la |U| - (1-\delta)^2 \mc I_U \lb \bm B \rb  \ra >0.  
\]
The pointwise convergence in probability can be upgraded to uniform convergence by using Theorem 1 in \cite{newey1991uniform} and the fact that the distance $d$ is $1$-Lipschitz. This completes the proof. $\hfill$ $\square$

\section{Proof of Theorem \ref{thm:combinatorics}}
The behavior of $\gamma_m(a)$ differs in the two regimes, which we treat in separate subsections.
\subsection{The case $a \in (0,2]$}

Let $\bm B\in\mathcal K_m(a)$ and let $\bm x\in\R^m$ satisfy $\| \bm x\|_2=1$.  By
Cauchy--Schwarz, we have 
\begin{align*}
 \bm x^\top \bm B \bm x
 &=\sum_{i=1}^m \left(\frac{B_{ii}}{\sqrt a}\right) \cdot (\sqrt a\,x_i^2)
   +\sum_{1\leq i<j \leq m}B_{ij} \cdot (2x_ix_j)\\
 &\le
 \left(\frac1a\sum_{i=1}^m B_{ii}^2+\sum_{1\leq i<j \leq m}B_{ij}^2\right)^{1/2} \cdot 
 \left(a\sum_i x_i^4+4\sum_{i<j}x_i^2x_j^2\right)^{1/2} \\ &
 = \sqrt{\mc I_{[m]}\lb \bm B \rb} \cdot \left(a\sum_i x_i^4+4\sum_{i<j}x_i^2x_j^2\right)^{1/2}.
\end{align*}
The first factor is at most $\sqrt m$ because $\bm B\in\mathcal K_m(a)$.  Moreover, since
\[
 4\sum_{i<j}x_i^2x_j^2
 =2\left(1-\sum_i x_i^4\right),
\]
the square of the second factor equals $2+(a-2)\sum_i x_i^4$.

When $a\le2$, the inequality $\sum_i x_i^4\ge1/m$ gives
\[
 2+(a-2)\sum_i x_i^4
 \le2+\frac{a-2}{m}.
\]
Thus
\[
 \lmax(\bm B)\le\sqrt{a+2(m-1)}.
\]
It suffices to construct a matrix $\bm B_1$ that achieves equality. Let $c=\sqrt{a+2(m-1)}$ and define a matrix $\bm B_1$ such that
\begin{equation}\label{eq:balanced-matrix}
 \bm B_{1}[i,i]=\frac{a}{c},
 \qquad
 \bm B_{1}[i,j]=\frac2c\quad(i\ne j).
\end{equation}
If $|U|=s$, then
\[
 \mathcal I_U(\bm B_1)
 =\frac{s\left[ a+2(s-1)\right]}{a+2(m-1)}\le s,
\]
so $\bm B_1\in\mathcal K_m(a)$.  

Moreover, since every row sum equals $c$, $\lmax(\bm B_1)=c=\sqrt{a+2(m-1)}$. $\hfill$ $\square$

\subsection{The case $a>2$}

The identity $\gamma_1(a)=\sqrt a$ follows immediately from the
constraint $\mathcal I_{\{1\}}(B)\le1$. We prove the recursion for
$m\ge2$. The conclusion of Theorem \ref{thm:combinatorics}, together with additional geometric properties of the maximizers, follows from Propositions \ref{prop:gamma-recursion} and \ref{prop:complete-tight-chain} below.

\begin{lemma}\label{lem:gamma-strict}
For every \(a>0\) and \(m\ge2\),
\[
    \gamma_m(a)>\gamma_{m-1}(a).
\]
\end{lemma}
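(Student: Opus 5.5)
The plan is to take a maximizer of the \((m-1)\)-dimensional problem, embed it into \(m\times m\) matrices by padding with a zero row and column, and then perturb it slightly so that \(\lmax\) strictly increases while every constraint in \eqref{eq:feasible-set} still holds.

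First, the embedding. Let \(\bm B^*\in\mathcal K_{m-1}(a)\) attain \(\gamma_{m-1}(a)=\lmax(\bm B^*)\). Such a maximizer exists by compactness. Let \(\bm v\in\R^{m-1}\) be a unit top eigenvector. Define \(\bm B_0\in\R^{m\times m}\) by placing \(\bm B^*\) in the upper-left \((m-1)\times(m-1)\) block and zeros elsewhere.

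For any \(U\subset[m]\), the entries in row and column \(m\) are zero, so
\[
\mathcal I_U(\bm B_0)=\mathcal I_{U\setminus\{m\}}(\bm B^*)\le |U\setminus\{m\}|.
\]
Hence \(\bm B_0\in\mathcal K_m(a)\) and \(\lmax(\bm B_0)=\gamma_{m-1}(a)\). Moreover, every \(U\) containing \(m\) has slack at least \(1\) in its constraint.

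Second, the perturbation. For \(t>0\) set \(\bm B_t:=\bm B_0+t\,(\bm w\bm e_m^\top+\bm e_m\bm w^\top)\), where \(\bm w=(\bm v,0)\). This adds \(t v_i\) to the entries \((i,m)\) and \((m,i)\) for \(i\le m-1\). Constraints indexed by \(U\not\ni m\) are unchanged. For \(U\ni m\), the cost increases by
\[
t^2\sum_{i\in U\setminus\{m\}}v_i^2\le t^2,
\]
which is at most \(1\), so it fits within the available slack. Thus \(\bm B_t\in\mathcal K_m(a)\) whenever \(t\le1\).

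Next I compute \(\lmax(\bm B_t)\) using the test vector \(\bm x=(\cos\theta\,\bm v,\sin\theta)\):
\[
\bm x^\top\bm B_t\bm x=\gamma_{m-1}\cos^2\theta+2t\cos\theta\sin\theta.
\]
Because the derivative in \(\theta\) at \(\theta=0\) equals \(2t>0\), a small \(\theta>0\) already gives a value exceeding \(\gamma_{m-1}(a)\). One could equally note that the top eigenvalue of the \(2\times2\) compression \(\begin{pmatrix}\gamma_{m-1}&t\\ t&0\end{pmatrix}\) equals \(\bigl(\gamma_{m-1}+\sqrt{\gamma_{m-1}^2+4t^2}\bigr)/2>\gamma_{m-1}\). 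Taking \(t=1\), this yields \(\gamma_m(a)\ge\lmax(\bm B_1)>\gamma_{m-1}(a)\).

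There is essentially no real obstacle here. The only step needing care is checking the constraints for subsets \(U\) that contain the new index \(m\), and this follows directly from the slack of \(1\) supplied by the zero padding.
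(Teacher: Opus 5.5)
Your proposal is correct and matches the paper's proof essentially step for step. Both arguments pad a maximizer of $\mathcal K_{m-1}(a)$ with a zero row and column, place $\delta$ times a unit top eigenvector in the new off-diagonal block, and absorb the added cost $\delta^2\le 1$ into the unit slack of every constraint containing the new index. Both then use a test vector tilted slightly toward $\bm e_m$ to show that the Rayleigh quotient strictly exceeds $\gamma_{m-1}(a)$.
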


\noindent \textbf{Proof of Lemma \ref{lem:gamma-strict}.}
Let \( \bm C\in\mathcal K_{m-1}(a)\) attain \(\gamma_{m-1}(a)\), and let
\(\bm u\in\mathbb R^{m-1}\) be a unit top eigenvector of \(\bm C\). For
\(0<\delta\le1\), define
\[
    \widetilde {\bm C}
    :=
    \begin{pmatrix}
        \bm C & \delta \bm u\\
        \delta \bm u^\top & 0
    \end{pmatrix}.
\]
We claim that \(\widetilde{\bm C}\in\mathcal K_m(a)\). Indeed, if \(T\subset[m-1]\), then
\[
\mc I_{T} \lb \widetilde{\bm C} \rb = \mc I_{T} \lb \bm C \rb  \leq |T|
\]
and 
\[
\begin{aligned}
    \mathcal I_{T\cup\{m\}}\lb \widetilde{\bm C} \rb
    &=
    \mathcal I_T(\bm C)
    +
    \delta^2\sum_{i\in T}u_i^2
    \le
    |T|+\delta^2
    \le
    |T|+1.
\end{aligned}
\]
Therefore, for $\bm w = \lb 1/\sqrt{1+\eta^2} \rb \cdot  \lb \bm u, \eta \rb^\top$, where $\eta>0$, we have
\[
    \gamma_m(a)
    \ge
    \lambda_{\max}\lb \widetilde{\bm C} \rb
    \geq \bm w^\top \,  \widetilde{\bm C} \, \bm w =  \frac{\gamma_{m-1}(a)+2\eta\delta}{1+\eta^2}
    >
    \gamma_{m-1}(a)
\]
whenever $\eta$ is chosen sufficiently small that $\eta<2\delta/\gamma_{m-1}(a)$.
$\hfill$ $\square$

\begin{lemma}\label{lem:structure}
Assume that \(a>2\) and \(m\ge2\). If
\(\bm B\in\mathcal K_m(a)\) attains \(\gamma_m(a)\), then
$\mc I_{[m]} \lb \bm B \rb = m$.
\end{lemma}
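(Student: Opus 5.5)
The plan is to argue by contradiction: if a maximizer $\bm B$ has slack in the full constraint, I will exhibit a small feasible perturbation of $\bm B$ with strictly larger top eigenvalue. The first step is a lattice property of the \emph{tight} sets, i.e.\ those $U$ with $\mc I_U(\bm B)=|U|$. I will show that $U\mapsto\mc I_U(\bm B)$ is supermodular:
\[
\mc I_{U\cup V}(\bm B)+\mc I_{U\cap V}(\bm B)\ge \mc I_U(\bm B)+\mc I_V(\bm B).
\]
Indeed, every diagonal term and every pair $\{i,j\}$ counted on the right is counted at least as often on the left, and the cross pairs between $U\setminus V$ and $V\setminus U$ appear only in $\mc I_{U\cup V}$.

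Now let $U,V$ be tight. Then
\[
\mc I_{U\cup V}(\bm B)\ge |U|+|V|-\mc I_{U\cap V}(\bm B)\ge |U\cup V|,
\]
using $\mc I_{U\cap V}(\bm B)\le |U\cap V|$ (take $\mc I_\varnothing=0$). Feasibility gives the reverse inequality, so $U\cup V$ is tight. Consequently there is a unique largest tight set $W$, possibly empty, and every constraint indexed by a $U\not\subseteq W$ is strict. Suppose, for contradiction, that $\mc I_{[m]}(\bm B)<m$. Then $W\ne[m]$.

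Let $\bm x$ be a unit top eigenvector of $\bm B$, and fix $j\notin W$. I will perturb only entries involving $j$ (the diagonal entry $B_{jj}$, or a pair $B_{ij}$). Such an entry enters only constraints $\mc I_U$ with $j\in U$, hence $U\not\subseteq W$, and all of these are strict. Since there are finitely many constraints, a sufficiently small perturbation of these entries keeps $\bm B$ inside $\mathcal K_m(a)$. Two cases remain.
\begin{itemize}
\item If $x_j\neq0$ for some $j\notin W$, replace $B_{jj}$ by $B_{jj}+\delta$. Then $\lmax\ge \bm x^\top\bm B\bm x+\delta x_j^2>\gamma_m(a)$.
\item If $\bm x$ vanishes on $[m]\setminus W$, then $W\neq\varnothing$ and some $i\in W$ has $x_i\neq0$. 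Add $\delta\,\mathrm{sign}(x_i)$ to $B_{ij}=B_{ji}$ for some $j\notin W$. As in the proof of Lemma \ref{lem:gamma-strict}, use the test vector $(\bm x+\eta \bm e_j)/\sqrt{1+\eta^2}$. Since $(\bm B\bm x)_j=\gamma_m(a)x_j=0$, the Rayleigh quotient is at least
\[
\frac{\gamma_m(a)+2\eta\delta|x_i|+\eta^2B_{jj}}{1+\eta^2},
\]
which exceeds $\gamma_m(a)$ for $\eta$ small.
\end{itemize}
In both cases we contradict the maximality of $\bm B$. The degenerate case $W=\varnothing$ is already covered by the first case, or can be handled simply by scaling $\bm B$ by $1+\delta$.

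The main obstacle is the supermodularity and union-closure step. The point to check carefully is that it forces \emph{every} constraint involving an index outside $W$ to be strict, not just the full one. Once this is in place, the perturbation argument is routine. The hypothesis $a>2$ does not seem to be needed in this argument; I would double-check whether it enters only through the later propositions.
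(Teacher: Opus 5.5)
Your argument is correct, and it takes a lighter route than the paper. The paper first normalizes the maximizer in three steps:
\begin{enumerate}
\item it shows the top eigenvector has full support via Lemma~\ref{lem:gamma-strict};
\item it applies a diagonal sign conjugation;
\item it proves all entries of $\bm B$ are strictly positive.
\end{enumerate}
Only then does it use the uncrossing identity. There, positivity of the cross entries makes the inequality strict and forces any two tight sets to be nested. Finally, it shows every index lies in a tight set by bumping a diagonal entry, which uses $v_i>0$.

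You instead use only the weak supermodular inequality. That needs no sign information and gives closure of tight sets under unions, hence a single maximal tight set $W$ outside of which every constraint is slack. Your case split on whether the top eigenvector charges $[m]\setminus W$ then replaces the paper's positivity step. Your second case is actually vacuous: an eigenvector supported in a proper subset $W$ would give $\gamma_m(a)\le\gamma_{|W|}(a)<\gamma_m(a)$. Handling it directly with an off-diagonal bump is still fine.

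Your route is shorter and more general. As you suspected, nothing in it uses $a>2$. This is consistent with the subcritical case: for $a\le 2$ the Cauchy--Schwarz bound there also forces $\mc I_{[m]}(\bm B)\ge m$ at any maximizer.

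The paper's heavier route buys structural information that is reused later. Lemma~\ref{lem:chain} invokes both the strict positivity of $\bm B$ and the total ordering of tight sets in its rotation argument. Your union-closed family of tight sets does not by itself give that total order.
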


\noindent \textbf{Proof of Lemma \ref{lem:structure}.}
Let \(\bm v\) be a unit top eigenvector of \(\bm B\). We split the proof into two steps. 
\medskip
\noindent
\underline{ \it Step 1: Up to permutation and diagonal conjugation, \(\bm v\) and \(\bm B\) are entrywise
positive.}
We first show that $\bm v$ has full support. Suppose $S:=\{i\in[m]:v_i\ne0\}$ is a proper subset of \([m]\). Since the principal minor \(B_S\) belongs to \(\mathcal K_{|S|}(a)\), we have
\[
\begin{aligned}
    \gamma_m(a) &= v^\top Bv =v_S^\top B_Sv_S \le \lambda_{\max}(B_S)
    \le \gamma_{m-1}(a),
\end{aligned}
\]
contradicting Lemma~\ref{lem:gamma-strict}. Hence, $S=[m]$.

Now define 
\[
   \bm D
    :=
    \operatorname{diag}
    \bigl(
        \operatorname{sgn}(v_1),\ldots,
        \operatorname{sgn}(v_m)
    \bigr),
\]
and put
\[
    \bm C:=\bm D \bm B \bm D,
    \qquad
    \bm u:=  \bm D \bm v.
\]
Since diagonal sign conjugation preserves the eigenvalues and all the
constraints defining \(\mathcal K_m(a)\),
\[
   \bm C \in\mathcal K_m(a),
    \qquad \text{and} \qquad 
    \lambda_{\max}( \bm C)=\gamma_m(a),
    \qquad
    u_i>0.
\]
Let \(|\bm C|\) denote the entrywise absolute-value matrix obtained from $\bm C$. Since the
constraints \(\mathcal I_U\) depend only on squares of the entries,
\[
    |\bm C|\in\mathcal K_m(a).
\]
Consequently,
\[
    \gamma_m(a)
    =
    \bm u^\top \bm C \bm u
    \le
    \bm u^\top|\bm C| \bm u
    \le
    \lambda_{\max}(|\bm C|)
    \le
    \gamma_m(a).
\]
Thus, equality holds throughout. In particular,
\[
\begin{aligned}
0
&=
\bm u^\top|\bm C| \bm u- \bm u^\top \bm C \bm u
=
\sum_{i=1}^m
u_i^2\bigl(|C_{ii}|-C_{ii}\bigr)
+
2\sum_{1\le i<j\le m}
u_iu_j\bigl(|C_{ij}|-C_{ij}\bigr).
\end{aligned}
\]
Every term in the last expression is nonnegative, and all coefficients
\(u_i^2\) and \(u_iu_j\) are strictly positive. Hence
\[
    C_{ij}\ge0
    \qquad\text{for all }i,j.
\]
Replacing \(\bm B\) by \(\bm C\) and \(\bm v\) by \(\bm u\), we may therefore assume
from now on that
\begin{equation}\label{eq:B-v-nonnegative}
    v_i>0
    \quad\text{and}\quad
    B_{ij}\ge0
    \qquad\text{for all }i,j.
\end{equation}
To obtain strict positivity, we show that no entry of $\bm B$ can be zero. Suppose $B_{kl}=0$ for some $(k,l)$. Define a new matrix $\bm B'$ by
\[
\bm B'= \sqrt{1-\delta^2} \cdot \bm B + \delta \cdot \bm 1_{kl}
\]
where $\bm 1_{kl}$ is a symmetric matrix with the $(k,l)$ entry equal to one and all other entries equal zero.

One can check that
\[
\mc I_{U} \lb \bm B' \rb =  
\begin{cases}
    (1-\delta^2) \cdot \mc I_{U} \lb \bm B \rb + \delta^2 \cdot \left[ \mathbf 1_{\la k \neq l \ra} + \frac{\mathbf{1}_{\la k=l \ra}}{a} \right], \qquad &\text{if $\la k,l \ra \subset U$}; \\
    (1-\delta^2) \cdot \mc I_{U} \lb \bm B \rb, &\text{if $\la k,l \ra \not\subset U$}
\end{cases}
\]
In either case, $\mc I_{U} \lb \bm B' \rb \leq |U|$ for every $U \subset [m]$ whenever $\delta$ is sufficiently small. Thus, $\bm B' \in \mc K_m(a)$. However, this leads to a contradiction because
\[
\gamma_m(a) \geq \lmax \lb \bm B'  \rb \geq \bm v^\top \, \bm B' \, \bm v = \sqrt{1-\delta^2} \cdot \gamma_m(a) +2 \delta\cdot v_k \, v_l > \gamma_m(a)
\]
for all sufficiently small $\delta$, since $v_kv_l>0$.

\medskip
\noindent
\underline{Step 2: Hierarchical structure of tight sets.}
Call a nonempty set \(U\subset[m]\) {\it tight} if $\mathcal I_U(\bm B)=|U|$.
We use the convention \(\mathcal I_\varnothing(\bm B)=0\). For arbitrary
\(U,V\subset[m]\), we have 
\begin{align}
&\mathcal I_{U\cup V}(\bm B)
+
\mathcal I_{U\cap V}(\bm B)
\nonumber
=
\mathcal I_U(\bm B)
+
\mathcal I_V(\bm B)
+
\sum_{\substack{i\in U\setminus V\\
                  j\in V\setminus U}}
B_{ij}^2.
\label{eq:uncrossing-identity}
\end{align}
If \(U\) and \(V\) are tight and neither contains the other, then both
\(U\setminus V\) and \(V\setminus U\) are nonempty. Hence
\[
\begin{aligned}
    \mathcal I_{U\cup V}(\bm B) + \mathcal I_{U\cap V}(\bm B)
    &> |U|+|V|= |U\cup V|+|U\cap V|,
\end{aligned}
\]
which is a contradiction. Thus, any two nonempty tight sets are nested.

We next claim that every vertex belongs to a tight set. Suppose that
some \(i\in[m]\) belongs to no tight set. Then for all $U \subset [m] \setminus \la i \ra$, we have strict inequalities
\[
\mc I_{U \cup \la i \ra} \lb \bm B \rb < 1+|U|.
\]
Since there are finitely many such $U$, we may construct a new matrix $\bm B'' \in \mc K_{m}(a)$ by increasing $B_{ii}$ slightly. The resulting matrix $\bm B''$ satisfies
\[
\lmax \lb \bm B'' \rb > \lmax \lb \bm B \rb = \gamma_m(a),
\]
which is a contradiction. Thus, every vertex belongs to a tight set.
The tight sets therefore form a chain whose union is \([m]\). Its
largest member must consequently be \([m]\), so $ \mathcal I_{[m]}(\bm B)=m.$ $\hfill$ $\square$

\begin{lemma}\label{lem:chain}
Assume that \(a>2\) and \(m\ge2\). If
\(\bm B\in\mathcal K_m(a)\) attains \(\gamma_m(a)\), then there exists
\(U\subset[m]\) with \(|U|=m-1\) such that
\[
    \mathcal I_U(\bm B)=m-1.
\]
\end{lemma}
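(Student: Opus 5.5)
The proof will be by contradiction, building on the chain structure obtained in the proof of Lemma~\ref{lem:structure}. As there, I first normalize so that $\bm B$ is entrywise positive with entrywise positive unit top eigenvector $\bm v$. Step~2 of that proof then says three things: the tight sets form a chain, $[m]$ is its largest member, and every vertex lies in some tight set. Let $W$ be the largest tight proper subset of $[m]$, with $W=\varnothing$ if there is none. If $|W|=m-1$ we are done. So assume $|W|\le m-2$, and pick two distinct indices $i,j\in[m]\setminus W$. By the chain property, any tight set containing $i$ or $j$ must equal $[m]$. Since there are finitely many sets, there is a uniform slack $\zeta>0$:
\[
\mathcal I_U(\bm B)\le |U|-\zeta
\quad\text{for every } U\ne[m] \text{ with } U\cap\{i,j\}\neq\varnothing .
\]

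The first step is to extract first-order (KKT) information. Perturb only entries in rows and columns $i,j$. Constraints for sets $U$ with $U\cap\{i,j\}=\varnothing$ are unaffected. All other constraints except $\mathcal I_{[m]}\le m$ have slack $\zeta$ and survive small perturbations. So, locally in these entries, $\bm B$ maximizes $\lambda_{\max}$ subject to the single ellipsoidal constraint $\mathcal I_{[m]}\le m$. Because $\bm v$ is simple (Perron--Frobenius, $\bm B>0$), $\lambda_{\max}$ is differentiable with gradient $\bm v\bm v^\top$. Lagrange multipliers then give some $\lambda>0$ with
\[
B_{rl}=\frac{v_rv_l}{\lambda}\quad(l\ne r),\qquad B_{rr}=\frac{a\,v_r^2}{2\lambda},\qquad r\in\{i,j\}.
\]

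The second step exploits $a>2$, mirroring the identity $a\sum x_i^4+4\sum_{i<j}x_i^2x_j^2=2+(a-2)\sum x_i^4$ from the subcritical case. There $a\le2$ favoured spreading the weight of $\bm v$; here $a>2$ should favour concentrating it. I would consider a one-parameter family $\bm B(t)$. It moves weight from coordinate $j$ to coordinate $i$: the entries in rows $i,j$ are rebuilt by the KKT formula with $v_i,v_j$ replaced by $v_i\cos t+v_j\sin t$ and $-v_i\sin t+v_j\cos t$. It is then rescaled so that $\mathcal I_{[m]}(\bm B(t))=m$ exactly; all other constraints hold for small $t$ by the slack $\zeta$. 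Expanding $\bm w^\top \bm B(t)\bm w$ to second order, for a suitable test vector $\bm w(t)$, should give a gain proportional to $(a-2)\,(\text{positive quantity})\,t^2$. That contradicts maximality of $\gamma_m(a)$.

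The main obstacle is this second-order computation. The first-order terms vanish by KKT, and the sign of the quadratic term must be shown to be governed by $a-2$. It must also be checked that the rescaling does not eat the gain. If the rotation ansatz proves messy, I would instead try the simpler direction: increase $B_{ii}$, decrease $B_{jj}$, and adjust $B_{ij}$, keeping $\mathcal I_{[m]}$ fixed, and use convexity of $\lambda_{\max}$ to compare the endpoint with $\bm B$. A third option is to compare the $2\times2$ block on $\{i,j\}$ with a configuration in which $j$ is "appended" to $[m]\setminus\{j\}$ as in the variational recursion \eqref{eq:variational-recursion}, and invoke the strict inequality of Lemma~\ref{lem:gamma-strict}.
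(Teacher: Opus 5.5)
Your setup is correct up to and including the KKT step. With $W$ the largest proper tight set and $i\neq j$ outside it, every constraint that meets $\{i,j\}$, other than $\mathcal I_{[m]}\le m$, has slack. The Lagrange conditions $B_{rl}=v_rv_l/\lambda$ and $B_{rr}=av_r^2/(2\lambda)$ for $r\in\{i,j\}$ are right.

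The gap is the second step, and that step is the entire content of the lemma. The claimed gain ``proportional to $(a-2)t^2$'' is never computed. Your worry that the rescaling eats the gain is well founded. With your KKT form, the entries in rows and columns $i,j$ contribute exactly $2\lambda R(t)$ to $\bm w^\top\bm B(t)\bm w$ and $R(t)$ to $\mathcal I_{[m]}(\bm B(t))$. If you rescale the whole matrix to restore $\mathcal I_{[m]}=m$, the first-order change in the Rayleigh quotient is $\bigl(2\lambda-\gamma_m(a)/(2m)\bigr)\bigl(R(t)-R(0)\bigr)$. This is positive only if $4m\lambda>\gamma_m(a)$, and nothing in your argument guarantees that. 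The two fallback options you list are not developed enough to assess.

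The idea you are missing is that no perturbative computation is needed: conjugate the matrix instead of rebuilding its entries. Let $\bm Q_\theta$ be the Givens rotation in the $(i,j)$-plane and set $\bm B_\theta:=\bm Q_\theta^\top\bm B\bm Q_\theta$. Then:
\begin{itemize}
\item $\lambda_{\max}(\bm B_\theta)=\gamma_m(a)$ exactly;
\item $(\bm B_\theta)_V=\bm B_V$ whenever $V\cap\{i,j\}=\varnothing$;
\item the other proper constraints survive for small $\theta$ by your slack.
\end{itemize}
Conjugation preserves $\|\bm B\|_{\rm F}$, the Frobenius norm of the $\{i,j\}$-block, and all other diagonal entries, so
\[
\mathcal I_{[m]}(\bm B_\theta)-\mathcal I_{[m]}(\bm B)=\Bigl(\frac2a-1\Bigr)\bigl[B_{ij}^2-(B_\theta)_{ij}^2\bigr],
\qquad
(B_\theta)_{ij}=B_{ij}\cos2\theta+\frac{B_{jj}-B_{ii}}{2}\sin2\theta .
\]
Because $B_{ij}>0$, some small $\theta\neq0$ gives $|(B_\theta)_{ij}|<B_{ij}$. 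For $a>2$, $\bm B_\theta$ is then an optimizer with $\mathcal I_{[m]}(\bm B_\theta)<m$, contradicting Lemma~\ref{lem:structure}. This is the paper's proof.

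Your own route can also be closed, but only with a local rescaling. We have $R(t)-R(0)=\lambda^{-2}\bigl(\tfrac a2-1\bigr)\bigl[v_i^2v_j^2-w_i^2w_j^2\bigr]$ and $w_iw_j=v_iv_j\cos2t+\tfrac12(v_j^2-v_i^2)\sin2t$. Since $v_iv_j>0$, some small $t\neq0$ gives $R(t)>R(0)$. Multiply only the entries in rows and columns $i,j$ by $\sqrt{R(0)/R(t)}$. This keeps $\mathcal I_{[m]}=m$ and raises $\bm w^\top\bm B(t)\bm w$ by $2\lambda\bigl(\sqrt{R(0)R(t)}-R(0)\bigr)>0$, which is the contradiction you wanted.
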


\noindent\textbf{Proof of Lemma \ref{lem:chain}.}
By Steps 2 and 3 in the proof of Lemma~\ref{lem:structure}, after a
diagonal sign conjugation we may assume that
\[
    B_{ij}>0
    \qquad\text{for every }i,j\in[m],
\]
and the nonempty tight sets are totally ordered by inclusion.

Let \(U\) be the largest proper tight set, with the convention
\(U=\varnothing\) if no nonempty proper tight set exists, and put
\[
    W:=[m]\setminus U.
\]
By Lemma~\ref{lem:structure}, \([m]\) is tight. Moreover,
\begin{equation}\label{eq:proper-W-constraints-strict}
    \mathcal I_V(\bm B)<|V|
    \qquad
    \text{for every }V\subsetneq[m]
    \text{ such that }V\cap W\ne\varnothing.
\end{equation}
Indeed, if such a set \(V\) were tight, then \(V\not\subseteq U\);
the chain property would therefore imply \(U\subsetneq V\), contradicting
the maximality of \(U\).

Suppose that \(|W|\ge2\), and choose distinct \(i,j\in W\). Let
\(\bm Q_\theta\) be the identity outside the coordinates \(i,j\), and
let its \((i,j)\)-block be
\[
    \begin{pmatrix}
        \cos\theta&-\sin\theta\\
        \sin\theta&\cos\theta
    \end{pmatrix}.
\]
Set $\bm B_\theta:=\bm Q_\theta^\top\bm B\bm Q_\theta$. Its \((i,j)\)-entry is
\[
    (B_\theta)_{ij}
    =
    B_{ij} \cdot \cos(2\theta)
    +
    \frac{B_{jj}-B_{ii}}{2} \cdot \sin(2\theta).
\]
Since \(B_{ij}>0\), there exist arbitrarily small
\(\theta\ne0\) such that
\begin{equation}\label{eq:rotation-reduces-offdiag}
    |(B_\theta)_{ij}|<B_{ij}.
\end{equation}
Orthogonal conjugation preserves the Frobenius norm. Furthermore, the
Frobenius norm of the \((i,j)\)-principal block is preserved, so
\[
    (B_\theta)_{ii}^2+(B_\theta)_{jj}^2
    -
    B_{ii}^2-B_{jj}^2
    =
    2\left[
        B_{ij}^2-(B_\theta)_{ij}^2
    \right].
\]
Thus
\[
\begin{aligned}
    \mathcal I_{[m]}(\bm B_\theta)
    -
    \mathcal I_{[m]}(\bm B)
    &=
    \left(\frac2a-1\right)
    \left[
        B_{ij}^2-(B_\theta)_{ij}^2
    \right]
    <0,
\end{aligned}
\]
where we used \(a>2\) and
\eqref{eq:rotation-reduces-offdiag}. Hence,
$\mathcal I_{[m]}(\bm B_\theta)<m$ for all sufficiently small $\theta$.

The rotation acts only on coordinates in \(W\), so every constraint
indexed by a subset of \(U\) is unchanged for all $\theta$. All other proper constraints
are strict at \(\bm B\) by
\eqref{eq:proper-W-constraints-strict}. Moreover, since
\(\bm B_\theta\to\bm B\) as \(\theta\to0\), they remain strict for
all sufficiently small \(\theta\). Therefore, for all sufficiently small $\theta$,
$\bm B_\theta\in\mathcal K_m(a)$.

On the other hand, \(\bm B_\theta\) is orthogonally similar to
\(\bm B\), and hence
\[
    \lambda_{\max}(\bm B_\theta)
    =
    \lambda_{\max}(\bm B)
    =
    \gamma_m(a).
\]
Thus, \(\bm B_\theta\) is also an optimizer. Lemma~\ref{lem:structure}
would then imply $\mathcal I_{[m]}(\bm B_\theta)=m$,
contradicting the strict inequality above. Thus, \(|W|=1\) and $\mathcal I_U(\bm B)=m-1$.
\(\hfill\square\)

\begin{prop}\label{prop:gamma-recursion}
Assume that \(a>2\). Then $\gamma_1(a)=\sqrt a$,
and, for every \(m\ge2\),
\begin{equation}\label{eq:gamma-recursion-proof}
 \gamma_m(a)
 =
 \max_{0\le t\le1}
 \left\{
 t\gamma_{m-1}(a)
 +
 \sqrt{(1-t)\bigl(a(1-t)+4t\bigr)}
 \,
 \right\}.
\end{equation}
\end{prop}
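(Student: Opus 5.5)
We prove the two inequalities separately. The case $m=1$ is immediate. For $m\ge2$ write $\gamma:=\gamma_{m-1}(a)$.

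\emph{Lower bound.} Fix $t\in[0,1]$. Let $\bm C\in\mathcal K_{m-1}(a)$ attain $\gamma$, with unit top eigenvector $\bm u$; by Step 1 of Lemma~\ref{lem:structure} we may take $\bm u$ entrywise nonnegative. We look for an optimizer whose top eigenvector has the form $\bm w=(\sqrt t\,\bm u,\sqrt{1-t})$. Set
\[
\widetilde{\bm B}:=\begin{pmatrix}\bm C & y\,\bm u\\ y\,\bm u^\top & d\end{pmatrix},\qquad d,y\ge0 .
\]
Then
\[
\bm w^\top\widetilde{\bm B}\bm w = t\gamma + 2y\sqrt{t(1-t)} + d(1-t).
\]
The new constraints involve sets $T\cup\{m\}$ with $T\subset[m-1]$, and read
\[
\mathcal I_T(\bm C)+\frac{d^2}{a}+y^2\sum_{i\in T}u_i^2\le |T|+1 .
\]
Since $\sum_{i\in T}u_i^2\le1$ and $\mathcal I_T(\bm C)\le|T|$, all of them hold as soon as $d^2/a+y^2\le1$. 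Maximizing $2y\sqrt{t(1-t)}+d(1-t)$ over the ellipse $d^2/a+y^2\le1$ is a Cauchy--Schwarz computation, giving
\[
\sqrt{4t(1-t)+a(1-t)^2}=\sqrt{(1-t)\bigl(a(1-t)+4t\bigr)} .
\]
Hence $\gamma_m(a)\ge t\gamma+\sqrt{(1-t)(a(1-t)+4t)}$ for every $t\in[0,1]$.

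\emph{Upper bound.} Let $\bm B$ attain $\gamma_m(a)$. By Lemma~\ref{lem:chain}, after relabeling there is a tight set $U=[m-1]$, and $[m]$ is tight by Lemma~\ref{lem:structure}. Subtracting the two tight equalities gives
\[
\frac{B_{mm}^2}{a}+\sum_{i<m}B_{im}^2=\mathcal I_{[m]}(\bm B)-\mathcal I_U(\bm B)=1 .
\]
Let $\bm v=(\sqrt t\,\bm x,\ s)$ be the unit top eigenvector, with $\|\bm x\|_2=1$ and $s^2=1-t$; by Step 1 we may assume the entries are positive. Since $\bm B_U\in\mathcal K_{m-1}(a)$, we have $\bm x^\top\bm B_U\bm x\le\gamma$. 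Write $\bm b:=(B_{im})_{i<m}$. Then
\[
\gamma_m(a)=\bm v^\top\bm B\bm v\le t\gamma+2s\sqrt t\,\bm b^\top\bm x+B_{mm}s^2 .
\]
Using $\bm b^\top\bm x\le\|\bm b\|_2$ together with $B_{mm}^2/a+\|\bm b\|_2^2=1$, the same Cauchy--Schwarz computation as in the lower bound bounds the last two terms by $\sqrt{(1-t)(a(1-t)+4t)}$. This yields the matching upper bound.

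The main obstacle is the upper bound: extracting from the optimizer the exact budget $B_{mm}^2/a+\sum_{i<m} B_{im}^2\le1$ for the last row. This is precisely where Lemma~\ref{lem:chain} is used, since the tight codimension-one set makes the difference of two tight constraints equal to one. By contrast, the slack constraint $\mathcal I_{\{m\}}\le1$ alone would not control the off-diagonal entries $B_{im}$. If the relabeling issue needs care, note only that $U$ is some $(m-1)$-subset, and permuting coordinates preserves $\mathcal K_m(a)$.
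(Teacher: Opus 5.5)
Your proof is correct and is essentially the paper's argument. The lower bound uses the same bordered matrix, whose last column is proportional to a top eigenvector of an optimizer $\bm C\in\mathcal K_{m-1}(a)$, with the Cauchy--Schwarz-optimal choice of $(y,d)$ on the ellipse $y^2+d^2/a\le1$ (the paper writes these out explicitly as $\bm g_t,d_t$). The upper bound uses the same last-row budget $\|\bm g\|_2^2+d^2/a=1$ followed by Cauchy--Schwarz. Your attribution of the tight $(m-1)$-set to Lemma~\ref{lem:chain} is the accurate one, since the paper's proof cites only Lemma~\ref{lem:structure} at that point; the sign normalizations you invoke from Step~1 are harmless but not needed.
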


\noindent \textbf{Proof of Proposition \ref{prop:gamma-recursion}.}
The identity \(\gamma_1(a)=\sqrt a\) follows immediately from the formula
\[
    \mathcal I_{\{1\}}(B)=\frac{B_{11}^2}{a}\le 1.
\]
Let \(m\ge2\), and let \( \bm B\in\mathcal K_m(a)\) attain
\(\gamma_m(a)\). By Lemma~\ref{lem:structure}, after relabeling we may
write
\[
    \bm B=
    \begin{pmatrix}
        \bm C & \bm g\\
        \bm g^\top & d
    \end{pmatrix},
\]
where
\[
    \bm C\in\mathcal K_{m-1}(a),
    \qquad
    \mathcal I_{[m-1]}(\bm C)=m-1,
    \qquad
    \mathcal I_{[m]}(\bm B)=m.
\]
Subtracting the last two identities gives
\begin{equation}\label{eq:last-layer-budget}
    \|\bm g\|_2^2+\frac{d^2}{a}=1.
\end{equation}
We will show that
\begin{align} \label{<=}
    \gamma_m(a) \leq \max_{0\le t\le1} \left\{  t\gamma_{m-1}(a) + \sqrt{(1-t)\bigl(a(1-t)+4t\bigr)} \, \right\}.
\end{align}
and
\begin{align} \label{>=} 
\gamma_m(a) \geq \max_{0\le t\le1} \left\{  t\gamma_{m-1}(a) + \sqrt{(1-t)\bigl(a(1-t)+4t\bigr)} \, \right\}.
\end{align}

\noindent \underline{\it Proof of \eqref{<=}.} Let \((\bm u,s)\in\mathbb R^{m-1}\times\mathbb R\) be a unit top
eigenvector of \(\bm B\), and put
\[
    t:=\|\bm u\|_2^2,
    \qquad
    s^2=1-t.
\]
Since \( \bm C\in\mathcal K_{m-1}(a)\),
\[
    \bm u^\top \bm C \bm u = \| \bm u \|^2 \cdot \frac{\bm u^\top}{\| \bm u \|} \, \bm C \, \frac{\bm u}{\| \bm u \|}
    \le
    t \cdot \gamma_{m-1}(a).
\]
Moreover, by \eqref{eq:last-layer-budget} and Cauchy--Schwarz,
\begin{align*}
    2s\cdot \bm g^\top \bm u+d s^2
    =
    \left\langle
        \left(\bm g,\frac d{\sqrt a}\right),
        \left(2s\bm u,\sqrt a\,s^2\right)
    \right\rangle
    &\le
    \sqrt{\| \bm g\|_2^2+\frac{d^2}{a}}
    \cdot 
    \sqrt{4s^2\|\bm u\|_2^2+a s^4} \\
    &=
    \sqrt{(1-t)\bigl(4t+a(1-t)\bigr)}.
\end{align*}
Therefore,
\[
\begin{aligned}
    \gamma_m(a)
    =
    \bm u^\top \, \bm C \, \bm u+2s\cdot \bm g^\top \bm u+d s^2
    &\le
    t\gamma_{m-1}(a)
    +
    \sqrt{(1-t)\bigl(4t+a(1-t)\bigr)} \\
    &\leq \max_{0\le t\le1}
 \left\{
 t\gamma_{m-1}(a)
 +
 \sqrt{(1-t)\bigl(a(1-t)+4t\bigr)}
 \,
 \right\}.
\end{aligned}
\]
\noindent \underline{\it Proof of \eqref{>=}.} Let \( \bm C\in\mathcal K_{m-1}(a)\) attain \(\gamma_{m-1}(a)\), and let \(\bm q\) be a unit top eigenvector of \(\bm C\). For \(0\le t<1\), define
\[
    R(t)
    :=
    \sqrt{(1-t)\bigl(4t+a(1-t)\bigr)},
\]
\[
    \bm g_t
    :=
    \frac{2\sqrt{t(1-t)}}{R(t)} \bm q,
    \qquad
    d_t
    :=
    \frac{a(1-t)}{R(t)}.
\]
Note that
\[
\begin{aligned}
    \|\bm g_t\|_2^2+\frac{d_t^2}{a} &= \frac{4t(1-t)+a(1-t)^2}{R(t)^2}=1.
\end{aligned}
\]
Define
\[
    \bm B_t
    :=
    \begin{pmatrix}
        \bm C & \bm g_t\\
        \bm g_t^\top & d_t
    \end{pmatrix}.
\]
We will show that \(\bm B_t\in\mathcal K_m(a)\). Constraints not involving the
last coordinate follow from \(\bm C\in\mathcal K_{m-1}(a)\). For constraints involving the last coordinate, let
\(T\subset[m-1]\). Then
\[
\begin{aligned}
    \mathcal I_{T\cup\{m\}}(\bm B_t)
    &=
    \mathcal I_T(\bm C)
    +
    \frac{d_t^2}{a}
    +
    \sum_{i\in T}g_{t,i}^2
    \le
    |T|
    +
    \frac{d_t^2}{a}
    +
    \|\bm g_t\|_2^2
    =|T|+1.
\end{aligned}
\]
Thus, \(\bm B_t\in\mathcal K_m(a)\).

Evaluated at the unit vector $\lb \sqrt{t} \cdot \bm q, \sqrt{1-t} \rb^\top$,
the Rayleigh quotient of $\bm B_t$ is
\[
\begin{aligned}
t\gamma_{m-1}(a)
+
2\sqrt{t(1-t)}\, \bm g_t^\top \bm q
+
d_t(1-t) &=
t\gamma_{m-1}(a)
+
\frac{
4t(1-t)+a(1-t)^2
}{
R(t)
}\\
&=
t\gamma_{m-1}(a)+R(t).
\end{aligned}
\]
Consequently,
\[
    \gamma_m(a)
    \ge
    \lb \sqrt{t} \cdot \bm q, \sqrt{1-t} \rb^\top \, \bm B_t \, \lb \sqrt{t} \cdot \bm q, \sqrt{1-t} \rb
    \geq 
    t\gamma_{m-1}(a)
    +
    \sqrt{(1-t)\bigl(4t+a(1-t)\bigr)}.
\]
for all $t \in [0,1)$. Thus, we obtain \eqref{>=}. This completes the proof. $\hfill$ $\square$

We next derive a geometric description of the maximizer in $\mc K_m(a)$. It implies that when $a>2$, the maximizing matrices in different dimensions are nested in an appropriate sense.

\begin{prop}\label{prop:complete-tight-chain}
Assume that \(a>2\). If \( \bm B\in\mathcal K_m(a)\) attains
\(\gamma_m(a)\), then there exists a permutation \(\pi\) of \([m]\)
such that, with
\[
    U_r:=\{\pi(1),\ldots,\pi(r)\},
    \qquad r=1,\ldots,m,
\]
we have
\[
    \mathcal I_{U_r}(\bm B)=r,
    \qquad \forall  r=1,\ldots,m.
\]
\end{prop}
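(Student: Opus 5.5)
Our approach is induction on $m$, built on Lemmas~\ref{lem:structure} and~\ref{lem:chain}. For $m=1$ the statement says $\mathcal I_{\{1\}}(\bm B)=1$. This holds because any optimizer has $B_{11}=\pm\sqrt a$, since $\gamma_1(a)=\sqrt a$ is attained only when $B_{11}^2=a$. For $m\ge2$, Lemma~\ref{lem:structure} gives $\mathcal I_{[m]}(\bm B)=m$, and Lemma~\ref{lem:chain} gives a set $U$ with $|U|=m-1$ and $\mathcal I_U(\bm B)=m-1$. After relabeling, take $U=[m-1]$ and write
\[
\bm B=\begin{pmatrix}\bm C&\bm g\\ \bm g^\top&d\end{pmatrix}.
\]
Then $\bm C\in\mathcal K_{m-1}(a)$ and $\mathcal I_{[m-1]}(\bm C)=m-1$. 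It would suffice to show that $\bm C$ itself attains $\gamma_{m-1}(a)$. The induction hypothesis then supplies a complete tight chain $U_1\subset\dots\subset U_{m-1}=[m-1]$ for $\bm C$. Because $\mathcal I_{U_r}(\bm B)=\mathcal I_{U_r}(\bm C)$ for $U_r\subset[m-1]$, appending $U_m=[m]$ gives the required permutation.

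The key step is therefore to show that $\bm C$ is optimal in $\mathcal K_{m-1}(a)$. We would rerun the upper bound in the proof of Proposition~\ref{prop:gamma-recursion} and track the equality cases. Let $(\bm u,s)$ be a unit top eigenvector of $\bm B$ and set $t=\|\bm u\|_2^2$. That proof gives
\[
\gamma_m(a)=\bm u^\top\bm C\bm u+2s\bm g^\top\bm u+ds^2\le t\lambda_{\max}(\bm C)+R(t)\le t\gamma_{m-1}(a)+R(t)\le\gamma_m(a),
\]
where $R(t)=\sqrt{(1-t)(4t+a(1-t))}$. The last inequality is the recursion \eqref{eq:gamma-recursion-proof}. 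Equality must hold throughout, so $t\lambda_{\max}(\bm C)=t\gamma_{m-1}(a)$. If $t>0$, this gives $\lambda_{\max}(\bm C)=\gamma_{m-1}(a)$, which is what we need.

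The main obstacle is ruling out $t=0$. In that case $\bm u=0$, the top eigenvector is $e_m$, and $\gamma_m(a)=d\le\sqrt a=\gamma_1(a)<\gamma_m(a)$ by Lemma~\ref{lem:gamma-strict}, a contradiction. The same argument is simpler via Step~1 of the proof of Lemma~\ref{lem:structure}: the top eigenvector of an optimizer has full support, so $\bm u\neq0$. Hence $t>0$.

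One subtlety is the statement $\mathcal I_{[m-1]}(\bm C)=m-1$ coming from Lemma~\ref{lem:chain}. For the induction we only need that $\bm C$ is an optimizer in dimension $m-1$. Lemma~\ref{lem:structure} then independently supplies the tightness of $[m-1]$ for $\bm C$, so the lemma's conclusion is used only to identify the block decomposition. The induction hypothesis applied to $\bm C$ produces $\pi$ on $[m-1]$, and we extend it by $\pi(m)=m$ after undoing the relabeling. No further calculation beyond these equality cases should be needed.
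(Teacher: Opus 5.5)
Your proof is correct and follows the paper's argument. You induct on $m$, split off a tight $(m-1)$-block, rerun the upper bound from Proposition~\ref{prop:gamma-recursion}, and use the equality case with $t>0$ to show that $\bm C$ attains $\gamma_{m-1}(a)$; you also rightly cite Lemma~\ref{lem:chain} for the tight $(m-1)$-set, where the paper cites only Lemma~\ref{lem:structure}.

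One correction to your last paragraph: the tightness $\mathcal I_{[m-1]}(\bm C)=m-1$ from Lemma~\ref{lem:chain} does more than fix the block decomposition. It is exactly what gives the budget $\|\bm g\|_2^2+d^2/a=1$ that the bound by $R(t)$ requires.
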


\noindent \textbf{Proof of Proposition \ref{prop:complete-tight-chain}.}
We argue by induction on \(m\). The assertion is immediate for \(m=1\). 
Suppose \(m\ge2\) and that the result holds for \(m-1\). By
Lemma~\ref{lem:structure}, after relabeling we may write
\[
    \bm B=
    \begin{pmatrix}
        \bm C & \bm g\\
        \bm g^\top & d
    \end{pmatrix},
\]
where
\[
    \bm C\in\mathcal K_{m-1}(a),
    \qquad
    \mathcal I_{[m-1]}(\bm C)=m-1,
    \qquad
    \mathcal I_{[m]}(\bm B)=m.
\]
We will show that $\bm C$ is also a maximizer for the largest eigenvalue over $\mc K_{m-1}(a)$. As in the proof of Proposition \ref{prop:gamma-recursion}, let \((\bm u,s)\) be a unit top eigenvector of \(\bm B\), and put
\[
    t:=\|\bm u\|_2^2,
    \qquad
    s^2=1-t.
\]
Subtracting the last two identities gives
\begin{equation*}
    \|\bm g\|_2^2+\frac{d^2}{a}=1.
\end{equation*}
We first note that $0<t<1$. Indeed, if \(t=1\), then
\[
    \gamma_m(a)
    =
    \bm u^\top \bm C \bm u
    \le
    \gamma_{m-1}(a),
\]
contradicting Lemma~\ref{lem:gamma-strict}. If \(t=0\), then
\[
    \gamma_m(a)=d\le\sqrt a=\gamma_1(a),
\]
which gives the same contradiction.

As in the proof of Proposition~\ref{prop:gamma-recursion},
\[
\begin{aligned}
    \gamma_m(a)
    =
    \bm u^\top \bm C \bm u+2s\cdot \bm g^\top \bm u+d s^2
    &\le
    \max_{0\le r\le1}
    \left\{
        r \cdot \gamma_{m-1}(a)
        +
        \sqrt{(1-r)\bigl(4r+a(1-r)\bigr)}
    \right\}
    = \gamma_m(a),
\end{aligned}
\]
where the final equality follows from
\eqref{eq:gamma-recursion-proof}. Hence, equality holds throughout.

The first inequality was obtained by adding the two bounds
\[
    \bm u^\top \bm C \bm u\le t\gamma_{m-1}(a)
\]
and
\[
    2s\cdot \bm g^\top \bm u+d s^2
    \le
    \sqrt{(1-t)\bigl(4t+a(1-t)\bigr)}.
\]
Since equality holds, we must have $\bm u^\top \bm C \bm u=t\gamma_{m-1}(a)$, or equivalently,
\[
    \left(\frac{\bm u}{\sqrt t}\right)^\top
    \bm C
    \left(\frac{\bm u}{\sqrt t}\right)
    =
    \gamma_{m-1}(a).
\]
Since \(\bm C\in\mathcal K_{m-1}(a)\), this yields $\lmax(\bm C)=\gamma_{m-1}(a)$.
Thus, $\bm C$ is also a maximizer for the largest eigenvalue over $\mc K_{m-1}(a)$.

Now, by the induction hypothesis, there exists a permutation \(\pi_0\) of
\([m-1]\) such that
\[
    \mathcal I_{\{\pi_0(1),\ldots,\pi_0(r)\}}(C)=r,
    \qquad r=1,\ldots,m-1.
\]
The same identities hold for the corresponding principal submatrices
of \(B\). Define a new permutation \(\pi_1\) of \([m]\) by
\[
    \pi_1(r):=\pi_0(r),
    \qquad r=1,\ldots,m-1,
    \qquad
    \pi_1(m):=m.
\]
Then
\[
    \mathcal I_{\{\pi(1),\ldots,\pi(r)\}}(B)=r,
    \qquad r=1,\ldots,m.
\]
This completes the proof. $\hfill$ $\square$

\section{Proof of Theorem \ref{thm:c=0}}

Write $$L_{n,k}:= \log \lb n/k \rb$$ for shorthand.

Without loss of generality, we may assume that $a=2$. In fact, if $\bm A$ is the random matrix with diagonal variance $a$ and $\bm A_{\rm GOE}$ is the matrix obtained by replacing the diagonal entries of $\bm A$ by $\sqrt{2} \cdot Z_i$, where $\la Z_1,\dots, Z_n \ra$ are standard normal random variables, then
\[
\left| \frac{M_{n,k}}{2\sqrt{k\cdot L_{n,k}}} - \frac{M_{n,k}^{\rm GOE}}{2\sqrt{k \cdot L_{n,k}}}  \right | \leq  \frac{\max_{1\leq i \leq n} \left| A_{ii} - \sqrt{2} \cdot Z_i \right|}{2\sqrt{k \cdot L_{n,k}}} = O_{\mb P} \lb \sqrt{\frac{\log n}{k\cdot \log \lb n/k \rb}} \, \rb = o_{\mb P}(1)
\]
where the second equality follows from the fact that the maximum of n i.i.d. standard normal random variables is of order $O_{\mb P} \lb \sqrt{\log n} \rb$. Thus, we may assume $a=2$ for simplicity. This assumption simplifies the covariance computation below.

It suffices to prove that for every $1>\delta>0$,
\begin{align} \label{c=0 upper bound}
\mb P \lb  M_{n,k} \geq (1+\delta)\cdot 2\sqrt{k \cdot L_{n,k}} \, \rb \to 0    
\end{align}
and 
\begin{align} \label{c=0 lower bound}
\mb P \lb  M_{n,k} \leq (1-\delta)\cdot 2\sqrt{k \cdot L_{n,k}} \, \rb \to 0    
\end{align}

\noindent \underline{Proof of \eqref{c=0 upper bound}.} The upper bound \eqref{c=0 upper bound} is standard and follows from an $\ve$-net argument. For a subset $S \subset [n]$ and a sufficiently small $\ve$, to be chosen below, let $\mc N_S$ be an $\ve$-net of the unit sphere $\mb S^{|S|-1}$. It is well known that
\[
\left| \mc N_\ve \right| \leq \lb 1 + 2/\ve \rb^s. 
\]
Observe that for every symmetric matrix $\bm B \in \mb R^{s \times s}$, we have
\[
\| \bm B \|_{\rm op} \leq \frac{1}{1-2\ve} \cdot \max_{\bm x \in \mc N_S} \left| \bm x^\top \bm B \bm x  \right|.
\]
Therefore, for a constant $C_\ve<1+\delta$ to be chosen below as a function of $\ve, \delta$, by applying the above bound to $\la \bm A_S; |S| \leq k \ra$, 
\begin{align*}
    \mb P \lb M_{n,k} > 2\lb 1 + \delta \rb \cdot \sqrt{kL_{n,k}} \rb &\leq \sum_{s=1}^k \sum_{|S|=s} \sum_{\bm x \in N_S}
    \mb P \lb  \left| \bm x^\top \bm A \bm x  \right|> 2C_\ve \cdot \sqrt{kL_{n,k}} \, \rb \\
    &\leq 2 \cdot \exp \lb -C_\ve^2 \cdot kL_{n,k}  \rb \cdot \sum_{s=1}^k \binom{n}{s} \cdot \lb 1 + \frac 2\ve \rb^s 
\end{align*}
where the second inequality follows from the fact that, for every $\bm x$, $\bm x^\top \bm A \bm x \sim N(0,2)$, together with the tail bound
\[
\mb P \lb |Z|>x \rb \leq e^{-x^2/4}, \qquad Z \sim N(0,2). 
\]
Since $k=o(n)$, we can use a crude upper bound $\binom{n}{s} \leq \lb en/s \rb^s$ to get
\[
 \sum_{s=1}^k \binom{n}{s} \cdot \lb 1 + \frac 2\ve \rb^s \leq \sum_{s=1}^k \left[ \frac{en}{s} \cdot  \lb 1 + \frac 2\ve \rb \right]^s 
 = \sum_{s=1}^k \exp \left[ s \cdot \log \lb \frac{en}{s} \rb + s \cdot \log \lb 1 + \frac 2\ve \rb \right]
\]
Since the function $s \to s\cdot \log(en/s)$ is increasing for sufficiently large $n$, we deduce that
\[
 \sum_{s=1}^k \binom{n}{s} \cdot \lb 1 + \frac 2\ve \rb^s \leq  \exp \left[ k \cdot \log \lb \frac{en}{k} \rb + k \cdot \log \lb 1 + \frac 2\ve \rb + \log k \right]
\]
Thus
\[
    \mb P \lb M_{n,k} > 2\lb 1 + \delta \rb \cdot \sqrt{kL_{n,k}} \rb \leq 2 \cdot \exp \la k\cdot \left[ -C_\ve^2\cdot L_{n,k} + 1 + L_{n,k} + \log \lb 1 + \frac 2\ve \rb \right] + \log k \ra.
\]
Since $L_{n,k} \to \infty$, the last term tends to zero provided that $C_\ve>1 $. Thus, we can choose $C_\ve=(1-2\ve)(1+\delta)$ and $\ve$ small enough such that $C_\ve>1$.

\noindent \underline{Proof of \eqref{c=0 lower bound}.} To show the lower bound, consider the subset of the sparse sphere
\[
\mc V_{n,k}:= \la \bm x \in \mb R^n: \left| \mbox{supp}(\bm x) \right| = k, \, x_i \in \la -\frac{1}{\sqrt{k}}, \frac{1}{\sqrt{k}} \ra \, \text{on} \, \mbox{supp}(\bm x) \ra.
\]
It is easy to check that
\[
|\mc V_{n,k}| = 2^k \cdot \binom{n}{k}.
\]
Put
\[
O_{n,k}:= \log |\mc V_{n,k}| = kL_{n,k} + O(k).
\]
Let $\la Y_{\bm x}; \bm x \in \mc V_{n,k} \ra$ be the Gaussian process defined by
\[
Y_{\bm x}:= \frac{1}{\sqrt{2}}\cdot \bm x^\top \bm A \bm x.
\]
Let $Z_n$ be the counting random variable
\[
Z_n:= \sum_{\bm x \in \mc V_{n,k}} \mathbf{1}_{\la Y_{\bm x} > z_n \ra}, \qquad z_n:=(1-\delta)\sqrt{2O_{n,k}}
\]
To prove \eqref{c=0 lower bound}, it suffices to show that $\mb P \lb Z_n > 0 \rb \to 1$. Thus, by the Paley--Zygmund inequality, as in the proof of \eqref{lower}, we only need to prove
\[
\frac{\mb E Z_n^2}{\lb \mb E Z_n \rb^2} \to 1. 
\]
We next derive a useful representation for the ratio in the last display. Observe that
\begin{align*}
    \frac{\mb E Z_n^2}{\lb \mb E Z_n \rb^2} &= \frac{\sum_{\bm x, \bm y \in \mc V_{n,k}}   \mb P \lb Y_{\bm x} > z_n, Y_{\bm y} > z_n \rb}{|\mc V_{n,k}|^2 \cdot \overline{\Phi}(z_n)^2}.
\end{align*}
For a pair $(\bm x, \bm y)$, the covariance structure of $Y$ is $\mb E \lb Y_{\bm x} Y_{\bm y} \rb= \langle \bm x, \bm y \rangle^2$; hence,
\[
\begin{pmatrix}
    Y_{\bm x} \\ Y_{\bm y} 
\end{pmatrix}
\sim N \lb \bm 0, 
\begin{pmatrix}
    1 & \langle \bm x, \bm y \rangle^2 \\
    \langle \bm x, \bm y \rangle^2 & 1
\end{pmatrix}
\rb.
\]
Therefore, with $\bm X , \bm Y$ i.i.d. and uniformly distributed on $\mc V_{n,k}$, we can write
\[
 \frac{\mb E Z_n^2}{\lb \mb E Z_n \rb^2} = \mb E \mc R_{z_n} \left[  \langle \bm X, \bm Y \rangle^2 \right]  .
\]
Here
\begin{align} \label{mc R}
\mc R_{z}(\rho) = \frac{\mb P \lb  G_1>z_n, G_2>z_n \rb}{\overline{\Phi} \lb z_n \rb^2}, \qquad 
\begin{pmatrix}
    G_1 \\ G_2 
\end{pmatrix}
\sim N \lb \bm 0, \,
\begin{pmatrix}
    1 & \rho \\
    \rho & 1
\end{pmatrix}
\rb.
\end{align}
Let $S_n$ be the sum of $n$ i.i.d. Rademacher random variables. We can then represent the random inner product as
\[
 \langle \bm X, \bm Y \rangle \stackrel{d}{=} \frac{S_R}{k}, \qquad R\sim \mbox{Hypergeometric} \lb N=n,K=k, \text{draws}=k \rb
\]
and $R$ is independent of the Rademacher random variables in $S_n$. Thus,
\begin{align*}
     \frac{\mb E Z_n^2}{\lb \mb E Z_n \rb^2} = \mb E \mc R_{z_n} \lb \frac{S_R^2}{k^2} \rb.
\end{align*}
To bound the last term, we split the expectation into the small- and large-overlap regions. To this end, put
\[
r_n^*:= \lfloor c_0k/L_{n,k}  \rfloor \qquad \text{for some sufficiently small $c_0>0$}.
\]
Write 
\[
 \mb E \mc R_{z_n} \lb \frac{S_R^2}{k^2} \rb -1  \leq    \underbrace{  \mb E \la  \left[ \mc R_{z_n} \lb \frac{S_R^2}{k^2} \rb  -1 \right] \cdot \mathbf{1}_{\la R \leq r_n^* \ra} \ra }_{I}
 +
 \underbrace{ \mb E \left[ \mc R_{z_n} \lb \frac{S_R^2}{k^2} \rb \cdot \mathbf{1}_{\la R > r_n^* \ra}  \right]
  }_{II} .
\]

\underline{ \it Bounding the small-overlap expectation I.} Put $\lambda_n:= z_n^2/k^2$ and note that $\lambda_nr_n^* \leq 1/16$ for all sufficiently large $n$ if $c_0$ is chosen sufficiently small. By Lemma \ref{lem: mc R bound}, we have
\begin{align*}
    \mb E \left[ \mc R_{z_n} \lb \frac{S_r^2}{k^2} \rb  -1 \Big| R=r \right] \cdot \mathbf{1}_{\la r \leq r_n^* \ra}
    &\lesssim  \mb E \left[ z_n^2 \cdot \frac{S_r^2}{k^2} \cdot \exp \lb \frac{z_n^2S_r^2}{k^2} \rb  \Big| R=r \right]  \cdot \mathbf{1}_{\la r \leq r_n^* \ra} \\
    &= \lambda_n \cdot  \mb E \left[  S_r^2  \cdot \exp \lb \lambda_n  S_r^2 \rb \Big| R=r  \right] \cdot \mathbf{1}_{\la r \leq r_n^* \ra} \\
    &\lesssim \lambda_n \cdot \mb E \left[ 8r \cdot \lb e^{S_r^2/8r} -1 \rb \cdot \exp \lb \lambda_n  S_r^2 \rb \Big| R=r \right] \cdot \mathbf{1}_{\la r \leq r_n^* \ra} \\
    &\lesssim \lambda_nr  \cdot \mb E \left[ \exp \lb \lb \lambda_n+ \frac{1}{8r} \rb \cdot S_r^2 \rb  \right] \cdot \mathbf{1}_{\la r \leq r_n^* \ra}.
\end{align*}
To bound the last term, take $u>0$; let $G$ be a standard normal random variable independent of everything else, and write
\begin{align*}
    \mb E \left[ \exp \lb u S_r^2  \rb \right]& = \mb E \left[ \mb E \lb \exp\lb  \sqrt{2u} \cdot GS_r \rb \Big| S_r \rb \right] \\
    &= \mb E \left[ \mb E \lb \exp\lb  \sqrt{2u} \cdot GS_r \rb \Big| G \rb \right] \\
    &= \mb E \left[ \cosh \lb \sqrt{2u} \cdot G \rb^r \right]  \leq  \mb E \exp \lb urG^2 \rb
    \leq \lb 1-2ur \rb^{-1/2}. 
\end{align*}
Applying the above estimate with $u=\lambda_n + 1/(8r)$, we obtain 
\[
 \mb E \left[ \exp \lb \lb \lambda_n+ \frac{1}{8r} \rb \cdot S_r^2 \rb  \right] \leq \frac{1}{\sqrt{1-r(2\lambda_n+1/(4r))}} \leq 10
\]
since $\lambda_nr \leq \lambda_n r_n^*\leq 1/16$.

Consequently,

\begin{align*}
     \mb E \la \left[ \mc R_{z_n} \lb \frac{S_R^2}{k^2} \rb -1  \right] \cdot \mathbf{1}_{\la R \leq r_n^* \ra} 
     \ra \lesssim \lambda_n \cdot  \mb E R = & \frac{z_n^2}{k^2} \cdot \frac{k^2}{n}
     = \frac{z_n^2}{n} \lesssim \frac{k\cdot \log(n/k)}{n} \to 0. 
\end{align*}

\underline{ \it Bounding the large-overlap expectation II.} By Lemma \ref{lem: mc R bound}, for all sufficiently large $n$,
\begin{align*}
    \mb E \left[ \mc R_{z_n} \lb \frac{S_R^2}{k^2} \rb \Big| R=r \right] & \leq 
     \mb E \left[ \mc R_{z_n} \lb \frac{R^2}{k^2} \rb \Big| R=r \right] \\
    &\lesssim z_n \cdot \exp \la \frac{z_n^2 \cdot \lb r^2/k^2 \rb}{1+ \lb r^2/k^2 \rb} \ra
    = z_n \cdot \exp \la \frac{z_n^2 \cdot r^2 }{k^2+ r^2 } \ra
\end{align*}
where the first line follows from the first bullet point in Lemma \ref{lem: mc R bound} and the second line follows from the third bullet point in Lemma \ref{lem: mc R bound}. 

Consequently,
\begin{align*}
 \mb E \left[ \mc R_{z_n} \lb \frac{S_R^2}{k^2} \rb \cdot \mathbf{1}_{\la R > r_n^* \ra}  \right]
 &= \sum_{r>r_n^*} \mb E \left[ \mc R_{z_n} \lb \frac{S_R^2}{k^2} \rb \Big| R=r \right] \cdot \mb P \lb R=r \rb \\
 &\lesssim \sum_{r>r_n^*} \mb P \lb R=r \rb \cdot z_n \cdot \exp \la \frac{z_n^2 \cdot r^2 }{k^2+ r^2 } \ra \\
 &\lesssim \sum_{r>r_n^*} \lb \frac{e^2k^2}{nr} \rb^r \cdot z_n \cdot \exp \la \frac{z_n^2 \cdot r^2 }{k^2+ r^2 } \ra \\
 &= \sum_{r>r_n^*} \exp \la r\cdot \log \lb \frac{e^2k^2}{nr}  \rb + \log z_n + \frac{z_n^2 \cdot r^2 }{k^2+ r^2 }  \ra
\end{align*}
To bound the last term, put $\theta:=r/k$ and write 
\begin{align*}
    & \exp \la r\cdot \log \lb \frac{e^2k^2}{nr}  \rb + \log z_n + \frac{z_n^2 \cdot r^2 }{k^2+ r^2 }  \ra \\
    = & \exp \la  2r -r\cdot L_{n,k} - r\log \theta + \log z_n +  \frac{\theta^2}{1+\theta^2}\cdot 2(1-\delta)^2\cdot k L_{n,k}\cdot (1+o(1)) \ra \\
    =& \exp \la  2r -r\cdot L_{n,k} - r\log \theta + \log z_n +  \frac{\theta}{1+\theta^2}\cdot 2(1-\delta)^2\cdot r L_{n,k}\cdot (1+o(1)) \ra \\
    =& \exp \la -r \cdot \left[ -2 + L_{n,k} +\log \theta - \frac{2\theta(1-\delta)^2}{1+\theta^2} \cdot L_{n,k} \cdot (1+o(1))  \right] + \log z_n \ra
\end{align*}
where the second line follows from $z_n^2=2(1-\delta)^2 \cdot kL_{n,k} \cdot (1+o(1))$ and the third line follows from substituting $\theta=r/k$.

Since $2\theta/(1+\theta^2) \leq 1$, $\theta \geq r_n^*/k > c_0/(L_{n,k})$, and $L_{n,k} \to \infty$, there exists a constant $c_\delta>0$ depending only on $\delta$ such that, for all sufficiently large $n$,
\[
-2 + L_{n,k} +\log \theta - \frac{2\theta(1-\delta)^2}{1+\theta^2} \cdot L_{n,k} \cdot (1+o(1)) \geq c_\delta \cdot L_{n,k}.
\]
Thus
\begin{align*}
\sum_{r>r_n^*} \exp \la r\cdot \log \lb \frac{e^2k^2}{nr}  \rb + \log z_n + \frac{z_n^2 \cdot r^2 }{k^2+ r^2 }  \ra &\leq z_n \cdot \sum_{r \geq r_n^*+1} \exp \la -rc_\delta L_{n,k}  \ra \\
&=z_n \cdot \frac{ \exp \la -c_\delta r_n^* L_{n,k} \ra}{1- \exp \la -c_\delta L_{n,k}\ra} \\
&= O \lb \sqrt{k L_{n,k}} \cdot \exp \la -c_\delta r_n^* L_{n,k} \ra \rb
\end{align*}
To bound the last term, note that
\[
\sqrt{k L_{n,k}} \cdot \exp \lb -c_\delta r_n^* L_{n,k} \rb \leq \max \la \sqrt{\frac{2}{c_0}} \cdot L_{n,k} \cdot \exp \lb -c_\delta L_{n,k} \rb, \sqrt{\frac{c_0}{2}} \cdot k \cdot \exp \lb -c_\delta c_0k \rb \ra .
\]
Indeed, if $r_n^* \leq 2 $, then $k \leq (2/c_0)L_{n,k}$; otherwise, $L_{n,k} \leq (c_0/2)k$. The last display tends to zero since $\min \la k, L_{n,k} \ra \to \infty$. We therefore obtain \eqref{c=0 lower bound}, completing the proof. $\hfill$ $\square$



\section{Proof of Theorem \ref{c>0}}
We use the following version of Fekete's lemma.

\begin{lemma} \label{lem:Fekete}
    If a sequence of real numbers $\la a_n; n \geq 1 \ra$ satisfies
    \[
    a_{m+n} \geq a_m + a_n
    \]
    for all $m,n \geq 1,$
    then 
    \[
    \lim_{n \to \infty} \frac{a_n}{n} = \sup_{n \geq 1} \la  \frac{a_n}{n} \ra.
    \]
\end{lemma}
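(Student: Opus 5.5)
We will prove the version of Fekete's lemma stated in Lemma \ref{lem:Fekete}, that superadditivity $a_{m+n}\ge a_m+a_n$ forces $a_n/n\to\sup_{n\ge1}a_n/n$. The plan is the classical division-with-remainder argument; the conclusion is to be read in the extended sense, so the limit may equal $+\infty$ when the supremum is infinite. Put $s:=\sup_{n\ge1}a_n/n\in(-\infty,+\infty]$.

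The upper bound is trivial: $a_n/n\le s$ for every $n$, so $\limsup_{n\to\infty} a_n/n\le s$.

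For the lower bound, fix $m\ge1$. Write any $n> m$ as $n=qm+r$ with $q\ge1$ and $r\in\{1,\dots,m\}$. We take $r\in\{1,\ldots,m\}$ rather than $\{0,\ldots,m-1\}$ so that no $a_0$ is needed. Iterating superadditivity gives
\[
a_n\ \ge\ q\,a_m + a_r\ \ge\ q\,a_m+\min_{1\le r\le m}a_r .
\]
Dividing by $n$ and letting $n\to\infty$ with $m$ fixed, we have $q/n\to 1/m$, while $\min_{1\le r\le m} a_r/n\to0$ because this minimum is a fixed finite number. Hence
\[
\liminf_{n\to\infty}\frac{a_n}{n}\ \ge\ \frac{a_m}{m}.
\]
Taking the supremum over $m$ gives $\liminf_{n\to\infty} a_n/n\ge s$. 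Combined with the upper bound, the limit exists and equals $s$; when $s=+\infty$ the lower bound alone already gives divergence to $+\infty$.

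The only step needing care is the bookkeeping of the remainder term. We must check that $q\,a_m$ is handled correctly whatever the sign of $a_m$. The bound $a_{qm}\ge q\,a_m$ holds by induction on $q$ regardless of sign, and the limit $q/n\to1/m$ is exact, so no sign issue arises. The remainder is bounded below uniformly in $n$, so it disappears after division by $n$. Beyond this there is no real obstacle.
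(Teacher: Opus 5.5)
Your proof is correct. It is the standard division-with-remainder argument for Fekete's lemma, and you handle the details properly: taking remainders in $\{1,\dots,m\}$ avoids any need for $a_0$, the bound $a_{qm}\ge q\,a_m$ holds regardless of sign, and you read the conclusion in the extended sense when the supremum is $+\infty$. The paper states this lemma without proof as a known fact, so there is no separate argument in the paper to compare against.
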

Lemma \ref{lem:Fekete} states that the limit $a_n/n$ exists and equals the supremum of the set $\la a_n/n; n \geq 1\ra$. We now turn to the proof of Theorem \ref{c>0}. Without loss of generality, we may assume $a=2$. In fact, if $\bm A$ is the random matrix with diagonal variance $a$ and $\bm A_{\rm GOE}$ is the matrix obtained by replacing the diagonal entries of $\bm A$ by $\sqrt{2} \cdot Z_i$, where $\la Z_1,\dots, Z_n \ra$ are standard normal random variables, then
\[
\left| \frac{M_{n,k}}{\sqrt{n}} - \frac{M_{n,k}^{\rm GOE}}{\sqrt{n}}  \right | = \frac{\max_{1\leq i \leq n} \left| A_{ii} - \sqrt{2} \cdot Z_i \right|}{\sqrt{n}} = O_{\mb P} \lb \sqrt{\frac{\log n}{n}} \, \rb = o_{\mb P}(1)
\]
where the second equality follows from the fact that the maximum of n i.i.d. standard normal random variables is of order $O_{\mb P} \lb \sqrt{\log n} \rb$. Thus, we may assume $a=2$ for simplicity. This assumption simplifies the covariance computation below.

Define
\begin{align*}
\Sigma_{n,k}:&= \la \bm \sigma \in \mb R^{n}: \, \| \bm \sigma \|_2=\sqrt{n} \,\, \text{and} \,\,  \| \bm \sigma \|_0 \leq k  \ra,  \\
H_{n,k} \lb \bm \sigma \rb :&= n^{-1/2} \cdot \bm \sigma^\top \, \bm A \, \bm \sigma, \\
\mc H_{n,k}:&= \mb E \lb \max_{\bm \sigma \in \Sigma_{n,k}} H_{n,k} \lb \bm \sigma \rb \rb.
\end{align*}
It is easy to see that
\begin{align} \label{equality}
\frac{\mb E M_{n,k}}{\sqrt{n}} = \mb E \left[ \max_{\bm \sigma \in \Sigma_{n,k}} \la \frac{H_{n,k}(\bm \sigma)}{n} \ra \right] = \frac{\mc H_{n,k}}{n}.
\end{align}
\noindent \underline{\it Step 1: Superadditivity.} We show that for any $n_1, n_2, k_1, k_2$,
\begin{align} \label{super-additivity}
    \mc H_{n_1+n_2, k_1+ k_2} \geq \mc H_{n_1, k_1} + \mc H_{n_2, k_2}. 
\end{align}
To see this, consider the product configuration $\Sigma_{n_1, k_1} \times \Sigma_{n_2, k_2} \subset \Sigma_{n_1+n_2, k_1+k_2}$ and write its elements as $\lb \bm \sigma^{(1)}, \bm \sigma^{(2)} \rb$, with
$\bm \sigma^{(1)} \in \Sigma_{n_1,k_1}$ and $\bm \sigma^{(2)} \in \Sigma_{n_2,k_2}$. We construct two Gaussian processes with appropriate covariance structures so that Slepian's lemma can be applied.

Consider two Gaussian processes on $\Sigma_{n_1, k_1} \times \Sigma_{n_2, k_2}$: 
\begin{itemize}
    \item $X\lb \bm \sigma^{(1)}, \bm \sigma^{(2)} \rb$ is the restriction of $\la H_{n_1+n_2,k_1+k_2} \lb \bm \sigma \rb; \bm \sigma \in \Sigma_{n_1+n_2,k_1+k_2} \ra$ to $\Sigma_{n_1, k_1} \times \Sigma_{n_2, k_2}$.
    \item $Y \lb  \bm \sigma^{(1)}, \bm \sigma^{(2)} \rb =  H_{n_1,k_1} \lb \bm \sigma^{(1)} \rb +  H_{n_2,k_2} \lb \bm \sigma^{(2)} \rb $.
\end{itemize}
Note that for two configurations $\bm \sigma=\lb  \bm \sigma^{(1)}, \bm \sigma^{(2)} \rb \in \Sigma_{n_1, k_1} \times \Sigma_{n_2, k_2}$ and $ \bm \tau= \lb  \bm \tau^{(1)}, \bm \tau^{(2)} \rb \in \Sigma_{n_1, k_1} \times \Sigma_{n_2, k_2}$, we have 
\begin{align*}
    \mb E \left[  X\lb \bm \sigma \rb X\lb \bm \tau \rb \right] 
    = & \lb n_1+n_2 \rb^{-1} \cdot  \mb E \left[ \, \lb \sum_{1\leq i,j \leq n_1+n_2} \sigma_i \sigma_j \cdot A_{ij} \rb \cdot  \lb \sum_{1\leq i,j \leq n_1+n_2} \tau_i \tau_j \cdot A_{ij} \rb \, \right] \\
    =& 2 \lb n_1+n_2 \rb^{-1} \cdot \sum_{i=1}^{n_1+n_2} \sigma_i^2 \tau_i^2 \, + \, \sum_{1 \leq i \neq j \leq n_1+n_2}  2\lb n_1+n_2 \rb^{-1} \cdot \sigma_i\sigma_j \tau_i \tau_j \\
    =& 2\lb n_1+n_2 \rb^{-1}  \cdot  \langle \bm \sigma, \bm \tau \rangle^2 
    = 2\lb n_1+n_2 \rb^{-1}  \cdot \left[ \langle \bm \sigma^{(1)}, \bm \tau^{(1)} \rangle 
    + \langle \bm \sigma^{(2)}, \bm \tau^{(2)} \rangle
    \right]^2,
\end{align*}
 and 
 \begin{align*}
    \mb E \left[  Y\lb \bm \sigma \rb Y\lb \bm \tau \rb \right] 
    = &  \mb E \la \, \left[ H_{n_1,k_1} \lb \bm \sigma^{(1)} \rb +  H_{n_2,k_2} \lb \bm \sigma^{(2)} \rb \right] \cdot \left[ H_{n_1,k_1} \lb \bm \tau^{(1)} \rb +  H_{n_2,k_2} \lb \bm \tau^{(2)} \rb \right] \, \ra \\
    = & \mb E \left[ H_{n_1,k_1} \lb \bm \sigma^{(1)} \rb H_{n_1,k_1} \lb \bm \tau^{(1)} \rb \right]
    + \mb E \left[ H_{n_2,k_2} \lb \bm \sigma^{(2)} \rb H_{n_1,k_1} \lb \bm \tau^{(2)} \rb \right] \\
    =& 2n_1^{-1} \cdot \langle \bm \sigma^{(1)}, \bm \tau^{(1)} \rangle^2 + 2n_2^{-1} \cdot \langle \bm \sigma^{(2)}, \bm \tau^{(2)} \rangle^2.
\end{align*}
Consequently, for all $\bm \sigma, \bm \tau \in \Sigma_{n_1, k_1} \times \Sigma_{n_2, k_2}$,
\[
\mb E \left[ X \lb \bm \sigma \rb^2 \right] =  \mb E \left[ Y \lb \bm \sigma \rb^2 \right] 
\qquad 
\text{and} \qquad 
\mb E \left[  Y\lb \bm \sigma \rb Y\lb \bm \tau \rb \right]
\geq 
\mb E \left[  X\lb \bm \sigma \rb X\lb \bm \tau \rb \right]
\]
where the inequality follows from the convexity of the function $f(x)=x^2$:
\begin{align*}
& 2\lb n_1+n_2 \rb^{-1}  \cdot \left[ \langle \bm \sigma^{(1)}, \bm \tau^{(1)} \rangle 
    + \langle \bm \sigma^{(2)}, \bm \tau^{(2)} \rangle
    \right]^2 \\
= & \, 2(n_1+n_2) \cdot \left[ \frac{n_1}{n_1+n_2} \cdot \frac{\langle \bm \sigma^{(1)}, \bm \tau^{(1)} \rangle}{n_1} + \frac{n_2}{n_1+n_2} \cdot \frac{\langle \bm \sigma^{(2)}, \bm \tau^{(2)} \rangle}{n_2} \right]^2 \\
\leq & \, 2n_1^{-1} \cdot \langle \bm \sigma^{(1)}, \bm \tau^{(1)} \rangle^2 + 2n_2^{-1} \cdot \langle \bm \sigma^{(2)}, \bm \tau^{(2)} \rangle^2.
\end{align*}
By Slepian's lemma, we deduce that
\[
\mc H_{n_1+n_2, k_1+ k_2}  \geq  \mb E \left[ \max_{\bm \sigma \in \Sigma_{n_1, k_1} \times \Sigma_{n_2, k_2}} X \lb \bm \sigma \rb \right] \geq \mb E \left[ \max_{\bm \sigma \in \Sigma_{n_1, k_1} \times \Sigma_{n_2, k_2}} Y \lb \bm \sigma \rb \right] = \mc H_{n_1, k_1} + \mc H_{n_2, k_2}.
\]

\noindent \underline{\it Step 2: Convergence in probability and concavity of $\mc E(c)$.}
For a fixed $c \in (0,1]$, put
\[
g_n(c):= \mc H_{n, \lfloor cn \rfloor}.
\]
By \eqref{super-additivity}, we have $g_{n_1+n_2}(c) \geq g_{n_1}(c) + g_{n_2}(c) $, so Lemma \ref{lem:Fekete} gives the existence of a limit $\mc E(c)$ such that
\[
\lim_{n \to \infty} \frac{\mc H_{n, \lfloor cn \rfloor}}{n} =  \lim_{n \to \infty} \frac{g_n(c)}{n} = \mc E(c). 
\]
Combining the display above with \eqref{equality}, we obtain
\[
\lim_{n \to \infty} \frac{\mb E M_{n,k}}{\sqrt{n}} = \mc E(c).
\]
On the other hand, $M_{n,\lfloor cn \rfloor}$ is the supremum of a Gaussian process with variance profile bounded above by two, so the Gaussian concentration inequality for a Lipschitz functional (see, for example, Theorem 5.8 in \cite{boucheron2013concentration}) yields
\[
\mb P \lb \left| \frac{M_{n,k}}{\sqrt{n}} - \frac{\mb E M_{n,k}}{\sqrt{n}} \right| \geq \ve \rb
\leq \exp \lb -n \ve^2/4 \rb.
\]
Thus, $M_{n,k}/\sqrt{n} $ converges to $\mc E(c)$ almost surely.

To show the concavity of $\mc E(.)$, take $c_1, c_2, \theta \in (0,1]$. Choose two sequences of positive integers $\la a_n, b_n \ra$ such that 
\[
a_n/(a_n+b_n) \to \theta \qquad \text{and} \qquad b_n/(a_n+b_n) \to 1-\theta.
\]
By \eqref{super-additivity}, we have 
\[
\frac{1}{a_n+b_n} \cdot \mc H_{a_n+b_n, \lfloor c_1 a_n \rfloor+\lfloor c_2 b_n \rfloor} 
\geq 
\frac{1}{a_n+b_n} \cdot \mc H_{a_n, \lfloor c_1 a_n \rfloor} + \frac{1}{a_n+b_n} \cdot \mc H_{b_n, \lfloor c_2 b_n \rfloor}
\]
Letting $n \to \infty$, we obtain $\mc E \lb \theta c_1 + (1-\theta) c_2 \rb \geq \theta \cdot \mc E(c_1) + (1-\theta) \mc E(c_2) $. Finally, to prove $\mc E(1)=2$, note that
\[
\frac{\lmax \lb \bm A_{[k]} \rb}{\sqrt{n}} \leq \frac{M_{n,k}}{\sqrt{n}} \leq \frac{\lmax \lb \bm A \rb}{\sqrt{n}}.
\]
Since both sides converge to two in probability, we obtain $\mc E(1)=2$.

\noindent \underline{\it Step 3: Scaling near zero of $\mc E(c)$.}
Define
\[
    b(c):=2\sqrt{c\log(1/c)},
    \qquad 0<c<1.
\]
Take an arbitrary sequence $\la c_j, j \geq 1 \ra$ such that \(c_j\downarrow0\). We will prove that
\[
    \frac{\mathcal E(c_j)}{b(c_j)} \to 1.
\]
For each fixed \(j\), the first
assertion of the theorem gives
\[
    \frac{M_{n,\lfloor c_jn\rfloor}}{\sqrt n}
    \stackrel{\mb P}{\to}
    \mathcal E(c_j)
\]
as $n\to \infty.$ Hence, we may choose a strictly increasing sequence \(n_j\to\infty\)
such that
\[
    c_jn_j\ge j
\]
and, with \(k_j:=\lfloor c_jn_j\rfloor\),
\begin{equation}\label{eq:diagonal-fixed-c}
    \mb P\left(
        \left|
        \frac{M_{n_j,k_j}}{\sqrt{n_j}}
        -
        \mathcal E(c_j)
        \right|
        >
        \frac{b(c_j)}{j}
    \right)
    \le \frac1j.
\end{equation}
Set
\[
    \widetilde c_j:=\frac{k_j}{n_j}.
\]
Then
\[
    k_j\to\infty,
    \qquad
    \widetilde c_j\to  0,
    \qquad
    \frac{\widetilde c_j}{c_j}\to  1.
\]
Consequently,
\begin{equation}\label{eq:b-equivalent}
    \frac{b \lb  \widetilde c_j \rb}{b \lb c_j  \rb}
    =
    \left[
        \frac{\widetilde c_j}{c_j} \cdot
        \frac{\log \lb 1/\widetilde c_j \rb}
             {\log  \lb 1/c_j \rb}
    \right]^{1/2}
    \to1.
\end{equation}
Applying Theorem~\ref{thm:c=0} to the sequence
\((n_j,k_j)\), we obtain
\[
    \frac{M_{n_j,k_j}}
    {2\sqrt{k_j\log(n_j/k_j)}}
    \stackrel{\mb P}{\to}1.
\]
Note that
\[
    \frac{2\sqrt{k_j\log(n_j/k_j)}}{\sqrt{n_j}}
    =
    b\lb \widetilde c_j \rb,
\]
so \eqref{eq:b-equivalent} yields
\[
    \frac{M_{n_j,k_j}/\sqrt{n_j}}{b(c_j)}
    \stackrel{\mb P}{\to}1.
\]
On the other hand, \eqref{eq:diagonal-fixed-c} implies
\[
    \frac{
        M_{n_j,k_j}/\sqrt{n_j}
        -
        \mathcal E(c_j)
    }{
        b(c_j)
    }
    \stackrel{\mb P}{\to}0.
\]
Therefore, $\mc E \lb  c_j\rb/b \lb c_j \rb \to 1$.

{
\par
\medskip
\noindent \underline{\it Step 4: Scaling of $\mc E(c)$ near one.}
As shown at the beginning, $\mc E(c)$ is independent of the fixed diagonal variance $a$, and it is enough to consider $a=2$. The Haar-eigenvector arguments below are applied only after this reduction.

Fix $0<c<1$, put $k=\lfloor cn\rfloor$, and let $Z\sim N(0,1)$. Define $t_c\geq0$ and $\rho_c$ by
\[
\mb P\lb |Z|\geq t_c\rb=c,
\qquad
\rho_c:=\mb E\left[Z^2\mathbf{1}_{\{|Z|\geq t_c\}}\right].
\]
Let $\bm h=(h_1,\ldots,h_n)^\top$, where $h_1,\ldots,h_n$ are i.i.d. $N(0,1)$. For any unit $k$-sparse vectors $\bm x$ and $\bm y$, the GOE covariance identity gives
\[
\mb E\left[\bm x^\top\bm A\bm x-\bm y^\top\bm A\bm y\right]^2
=4\left[1-(\bm x^\top\bm y)^2\right]
\leq
8\left(1-\bm x^\top\bm y\right)
=\mb E\left[2\bm h^\top\bm x-2\bm h^\top\bm y\right]^2.
\]
The Sudakov--Fernique comparison inequality therefore yields
\[
\mb E M_{n,k}
\leq
2\mb E\max_{\substack{\|\bm x\|_2=1\\ \|\bm x\|_0\leq k}}
\bm h^\top\bm x
=2\mb E\left(\sum_{i=1}^k h_{(i)}^2\right)^{1/2},
\]
where $h_{(1)}^2\geq\cdots\geq h_{(n)}^2$ are the decreasing order statistics of $h_1^2,\ldots,h_n^2$. Set $X_i=h_i^2$, and let $X_{1:n}\leq\cdots\leq X_{n:n}$ be the corresponding increasing order statistics. Then, with $J_n(u):=\mathbf{1}_{((n-k)/n,1]}(u)$,
\[
\frac1n\sum_{i=1}^k h_{(i)}^2
=\frac1n\sum_{j=n-k+1}^nX_{j:n}
=\frac1n\sum_{j=1}^nJ_n(j/n)X_{j:n}.
\]
Thus, the trimmed sum is an $L$-statistic, namely, a linear combination of order statistics with deterministic coefficients. Let $F$ be the distribution function of $Z^2$. Since $k/n\to c$, we have $J_n\to\mathbf{1}_{(1-c,1]}$ almost everywhere and $\sup_n\|J_n\|_\infty=1$; moreover, $F^{-1}\in L^1(0,1)$. Hence, the strong law for $L$-statistics \cite[Corollary~2.1]{vanZwet1980} gives
\[
\frac1n\sum_{i=1}^k h_{(i)}^2
\to\int_{1-c}^1F^{-1}(u)\,du
=\mb E\left[Z^2\mathbf{1}_{\{|Z|\geq t_c\}}\right]
=\rho_c
\qquad\text{almost surely}.
\]
The square roots are uniformly integrable because their second moments satisfy
\[
\mb E\left[\frac1n\sum_{i=1}^k h_{(i)}^2\right]
\leq
\mb E\left[\frac1n\sum_{i=1}^n h_i^2\right]=1.
\]
Using the expectation limit obtained in Step 2, we conclude that
\[
\mc E(c)\leq2\sqrt{\rho_c}. 
\]

For the reverse bound, let $\lambda_1\geq\cdots\geq\lambda_n$ be the eigenvalues of $\bm A$, and let $\bm v_1$ be a unit eigenvector associated with $\lambda_1=\lmax(\bm A)$. By orthogonal invariance of the GOE, the eigenvector matrix, modulo irrelevant column signs, is Haar distributed and is independent of the ordered eigenvalues \cite[Corollary~2.5.4]{anderson2010introduction}. Let $S$ contain the indices of the $k$ largest coordinates of $\bm v_1$ in absolute value, and put
\[
\xi_n:=\sum_{i\in S}v_1(i)^2,
\qquad
x_i:=\frac{v_1(i)\mathbf{1}_{\{i\in S\}}}{\sqrt{\xi_n}},
\quad 1\leq i\leq n.
\]
Then $\bm x=(x_i)_{i=1}^n$ is a unit $k$-sparse vector and $M_{n,k}\geq\bm x^\top\bm A\bm x$. The Gaussian representation of a Haar vector and the preceding trimmed-sum law give
\[
\xi_n
\stackrel{d}{=}
\frac{\sum_{i=1}^k h_{(i)}^2}{\sum_{i=1}^n h_i^2}
\stackrel{\mb P}{\to}\rho_c.
\]
Since $0\leq\xi_n\leq1$, we also have $\mb E\xi_n\to\rho_c$. To compute the required conditional expectation, complete $\bm v_1$ to an orthonormal eigenbasis $\bm v_1,\ldots,\bm v_n$ and write
\[
\bm A=\sum_{j=1}^n\lambda_j\bm v_j\bm v_j^\top.
\]
Conditionally on $\bm v_1$ and the eigenvalues, $\bm x$ is fixed,
$\langle\bm x,\bm v_1\rangle^2=\xi_n$, and the remaining eigenvectors form a
Haar orthonormal basis of $\bm v_1^\perp$. Thus, for every $j\geq2$, conditional
rotational invariance gives
\[
\mb E\left[\bm v_j\bm v_j^\top\mid
\bm v_1,\lambda_1,\ldots,\lambda_n\right]
=\frac{I-\bm v_1\bm v_1^\top}{n-1},
\]
and therefore
\[
\mb E\left[(\bm x^\top\bm v_j)^2\mid
\bm v_1,\lambda_1,\ldots,\lambda_n\right]
=\frac{1-\xi_n}{n-1}.
\]
Using the spectral decomposition and summing over $j$ now yields
\[
\mb E\left[\bm x^\top\bm A\bm x\mid \bm v_1,\lambda_1,\ldots,\lambda_n\right]
=\xi_n\lambda_1
+(1-\xi_n)\frac{\operatorname{Tr}(\bm A)-\lambda_1}{n-1}.
\]
The GOE edge theorem, including its $L^1$ version, gives
$\mb E\lambda_1/\sqrt n\to2$ and $\mb E|\lambda_1|=O(\sqrt n)$
\cite[Theorem~2.1.22 and Exercise~2.1.27]{anderson2010introduction}.
Moreover, $\operatorname{Tr}(\bm A)\sim N(0,2n)$, so
$\mb E|\operatorname{Tr}(\bm A)|=2\sqrt{n/\pi}=O(\sqrt n)$. Therefore,
\[
\frac1{\sqrt n}
\left|
\mb E\left[(1-\xi_n)\frac{\operatorname{Tr}(\bm A)-\lambda_1}{n-1}\right]
\right|
\leq
\frac{\mb E|\operatorname{Tr}(\bm A)|+\mb E|\lambda_1|}{\sqrt n(n-1)}
\to0.
\]
Since $\xi_n$ is independent of the eigenvalues, the expectation limit from Step 2 now gives
\[
\mc E(c)\geq2\rho_c.
\]
We have proved
\[
2\rho_c\leq\mc E(c)\leq2\sqrt{\rho_c}.
\]

It remains to expand $\rho_c$ as $c\uparrow1$. We have
\[
1-c
=\mb P\lb |Z|\leq t_c\rb
=\sqrt{\frac2\pi}\,t_c+O(t_c^3),
\qquad
t_c=\sqrt{\frac\pi2}(1-c)+O\lb (1-c)^3\rb,
\]
and hence
\begin{align*}
1-\rho_c
&=\sqrt{\frac2\pi}\int_0^{t_c}x^2e^{-x^2/2}\,dx \\
&=\frac13\sqrt{\frac2\pi}\,t_c^3+O(t_c^5)
=\frac\pi6(1-c)^3+O\lb (1-c)^5\rb.
\end{align*}
The two-sided bound on $\mc E(c)$ therefore implies
\[
2-\mc E(c)
\geq2-2\sqrt{\rho_c}
=\frac\pi6(1-c)^3+O\lb (1-c)^5\rb
\]
and
\[
2-\mc E(c)
\leq2-2\rho_c
=\frac\pi3(1-c)^3+O\lb (1-c)^5\rb.
\]
Thus
\[
\frac\pi6\leq
\liminf_{c\uparrow1}\frac{2-\mc E(c)}{(1-c)^3}
\leq
\limsup_{c\uparrow1}\frac{2-\mc E(c)}{(1-c)^3}
\leq\frac\pi3.
\]
This completes the proof. $\hfill$ $\square$
}


\section{Proof of Theorem \ref{thm:Wishart-limit}}
For a measure $\nu \in \mc P_2 \lb \mb R^k \rb$, write
\begin{align*}
    \mfQ \lb \nu \rb:= \int_{\mb R^k} \bm x \bm x^\top \, d\nu(\bm x).
\end{align*}
For every nonempty \(U\subseteq[k]\), put
\begin{align} \label{D_U(nu)}
    D_U(\nu) := D\left( \nu_U\,\middle\|\,\mu^{\otimes |U|}\right).
\end{align}
Thus
\[
    \mathcal M_{k,\beta,\mu}
    =
    \left\{
        \nu\in\mathcal P_2(\mathbb R^k):
        D_U(\nu)\leq\frac{|U|}{\beta}
        \text{ for every }\varnothing\neq U\subseteq[k]
    \right\}
\]
We first prove the total boundedness and convexity of $\Gamma_{k,\beta,\mu}$. Convexity is immediate because the KL divergence is convex in each argument. To prove total boundedness, take $\nu \in \mc M_{k,\beta,\mu}$. It suffices to derive a uniform bound on $\| \mfQ(\nu) \|_{\rm F}$.

Observe that for two probability measures $\mb P$ and $\mb Q$ and a positive measurable function $h: \mb R^k \to \mb R$, Jensen's inequality yields
\begin{align} \label{Jesen}
\int h \, d \mb P \leq D \lb \mb P \| \mb Q \rb + \log \lb  \int e^h \, d \mb Q \rb.
\end{align}
Applying the preceding inequality with $h(\bm x):= (t_0/2) \cdot \| \bm x \|^2$, $\mb P=\nu$, and $\mb Q=\mu^{\otimes k}$, we obtain
\begin{align*}
    \| \mfQ(\nu) \|_{\rm F} \leq \mbox{tr} \mfQ(\nu) &= \frac{2}{t_0} \cdot \int \frac{t_0}{2} \cdot \| \bm x \|^2 \, d\nu(\bm x) \\
    &\leq \frac{2}{t_0} \cdot \la  D \lb \nu \| \mu^{\otimes k} \rb + \log \left[ \int \exp \lb \frac{t_0\| \bm x \|^2}{2} \rb \, d \mu^{\otimes k}(\bm x) \right] \ra \\
    &\leq \frac{2}{t_0} \cdot \la \frac{k}{\beta} + k\cdot \log \lb \mb E e^{t_0 \xi^2/2} \rb \ra.
\end{align*}
Consequently, total boundedness follows. 

We now prove the convergence in Hausdorff distance. It suffices to show that
\begin{align} \label{upper-bound-Wishart}
    \sup_{\bm G \in \mc C^{\rm Wishart}_{k,\beta,\mu}} d \lb \bm G, \Gamma_{k,\beta,\mu} \rb \stackrel{\mb P}{\to} 0
\end{align}
and 
\begin{align} \label{lower-bound-Wishart}
    \sup_{\bm B \in \Gamma_{k,\beta,\mu}} d \lb  \mc C^{\rm Wishart}_{k,\beta,\mu}, \bm B \rb \stackrel{\mb P}{\to} 0
\end{align}

\subsection{Proof of \eqref{upper-bound-Wishart}} \label{sec:upper-Wishart}
We divide the proof of \eqref{upper-bound-Wishart} into several steps. To establish \eqref{upper-bound-Wishart}, it suffices to prove the weaker statement
\begin{align} \label{upper-Wishart-2}
      \sup_{\bm G \in \mc C^{\rm Wishart}_{k,\beta,\mu}} d \lb \bm G, \Gamma^{(\delta)}_{k,\beta,\mu} \rb \stackrel{\mb P}{\to} 0 \qquad \text{for all fixed $\delta>0$,}
\end{align}
where 
\begin{align}
     \mc M_{k,\beta,\mu}^{(\delta)}  &:= \left\{ \nu\in\mc P_2(\mb R^k): D_U\left( \nu \right) \leq \frac{|U|+\delta}{\beta} \text{ for every } \varnothing\neq U\subseteq[k]\right\},
    \\
    \Gamma_{k,\beta,\mu}^{(\delta)}
    &:=
    \left\{
        \mathsf Q(\nu):
        \nu\in\mc M_{k,\beta,\mu}^{(\delta)}
    \right\}.
\end{align}
To see how \eqref{upper-Wishart-2} implies \eqref{upper-bound-Wishart}, note that if $\nu \in \mc M^{(\delta)}_{k,\beta,\mu}$ and
\[
\widetilde{\nu}:= \frac{1}{1+\delta} \cdot \nu + \frac{\delta}{1+\delta} \cdot \mu^{\otimes k},
\]
then convexity of the KL divergence gives
\begin{align*}
    D\left( \widetilde\nu_U \,\middle\|\,\mu^{\otimes |U|} \right)
    &\leq  \frac{1}{1+\delta} \cdot D\left( \nu_U \,\middle\|\,\mu^{\otimes |U|}\right)
    \leq \frac{|U|+\delta} {\beta(1+\delta)} \leq \frac{|U|}{\beta}
\end{align*}
for all $|U| \geq 1$.

Therefore, we can approximate any $\nu \in \mc M^{(\delta)}_{k,\beta,\mu}$ by $ \widetilde\nu \in  \mc M_{k,\beta,\mu}$ in such a way that
\[
 \left\|  \mathsf Q(\nu) - \mathsf Q(\widetilde\nu) \right\|_{\rm F}
    = \frac{\delta}{1+\delta} \cdot \left\| \mathsf Q(\nu)-\bm I_k \right\|_{\rm F}
    \lesssim \delta
\]
where the last estimate follows from total boundedness. 

Thus, \eqref{upper-Wishart-2} implies \eqref{upper-bound-Wishart} and we may assume that $\nu \in \mc M^{(\delta)}_{k,\beta,\mu} $. To  prove \eqref{upper-Wishart-2}, we work with the closure of $\Gamma^{(\delta)}_{k,\beta,\mu}$, which is more convenient because it is a compact, convex set. To this end, define
\[
\overline{\Gamma}^{(\delta)}_{k,\beta,\mu}:= \overline{\Gamma^{(\delta)}_{k,\beta,\mu}},
\]
which is a compact, convex set. Then \eqref{upper-Wishart-2} is equivalent to
\begin{align} \label{upper-Wishart-3}
    \sup_{ \bm G \in \mc C^{\rm Wishart}_{k,\beta,\mu}} d \lb \bm G, \overline{\Gamma}^{(\delta)}_{k,\beta,\mu} \rb \stackrel{\mb P}{\to} 0 \qquad \text{for all fixed $\delta>0$}.
\end{align}

\underline{\it Step 1: Truncation and duality.} Choose a sufficiently large positive constant $M>0$ and define
\begin{align} \label{mcE}
    \mc E_n(M) := \left\{ \max_{1\leq j\leq n} \frac1p \sum_{a=1}^p\xi_{aj}^2\leq M \right\}.
\end{align}
Using Chernoff's inequality and a union bound, we obtain
\begin{align} \label{truncation}
    \mb P \lb \mc E_n(M)^c  \rb \leq n \cdot \exp \la -p \cdot \left[ \frac{t_0M}{2}- \log  \mb E \exp \lb \frac{t_0 \xi^2}{2}\rb  \right] \ra \to 0
\end{align}
since $p/\log n \to \beta>0$.

For a \(k\)-subset \(S\subset[n]\), put
\begin{align} \label{W_S}
    \bm W_S :=\frac{\bm X_{[S]}^\top\bm X_{[S]}}p.
\end{align}
From \eqref{mcE} and \eqref{truncation}, it follows that, on \(\mc E_n(M)\),
\begin{equation}\label{eq:Wishart-Gram-uniform-bound}
    \|\bm W_S\|_{\rm F}  \leq \operatorname{tr}(\bm W_S) = \sum_{j\in S} \frac1p\sum_{a=1}^p\xi_{aj}^2 \leq kM
    \qquad 
    \text{uniformly over all \(S\)}.
\end{equation}
We next derive a variational representation for $d \lb \bm G, \overline{\Gamma}^{(\delta)}_{k,\beta,\mu} \rb$. Define the support function
\begin{align} \label{h(A)}
h\lb \bm A \rb:= \sup_{\bm X \in \overline{\Gamma}^{(\delta)}_{k,\beta,\mu}} \langle \bm A, \bm X \rangle
\end{align}
where $\langle  \rangle$ is the natural inner product induced by the Frobenius norm.

For $\bm G \in \mb R^{k\times k}$, write
\begin{align}
    d \lb \bm G, \overline{\Gamma}^{(\delta)}_{k,\beta,\mu} \rb = \min_{\bm X \in \overline{\Gamma}^{(\delta)}_{k,\beta,\mu}} \| \bm G - \bm X \|_{\rm F} 
    &= \min_{\bm X \in \overline{\Gamma}^{(\delta)}_{k,\beta,\mu}} \sup_{\| \bm A \|_{\rm F} \leq 1} \langle 
    \bm A, \bm G - \bm X \rangle \nonumber \\
    &= \sup_{\| \bm A \|_{\rm F} \leq 1}  \min_{\bm X \in \overline{\Gamma}^{(\delta)}_{k,\beta,\mu}} \langle 
    \bm A, \bm G - \bm X \rangle \nonumber \\
    &= \sup_{\| \bm A \|_{\rm F} \leq 1} \la  \langle \bm A, \bm G \rangle - h(\bm A) \ra. \label{variatonal-formula}
\end{align}
Here the third equality follows from Sion's minimax theorem \cite{sion1958general}. 

\underline{\it Step 2: $\ve$-net argument.} Put
\[
\eta:= \frac{\ve}{10^3 \cdot \lb kM+ \sup_{\bm Q \in \overline{\Gamma}^{(\delta)}_{k,\beta,\mu}  } \| \bm Q \|_{\rm F} \rb}
\]
and let $\mc N_\ve$ be an $\eta$-net of the set $\la \bm A = \bm A^\top: \| \bm A \|_{\rm F} \leq 1 \ra$.

We show that for every fixed $\ve>0$,
\begin{align} \label{inclusion}
     \mc E_n(M) \cap  \left\{
        \exists\,S\subset[n],\ |S|=k:
        d\lb
            \bm W_S,
            \overline{\Gamma}^{(\delta)}_{k,\beta,\mu}
        \rb >\varepsilon
    \right\}
    \subseteq
    \bigcup_{\bm A\in\mc N_\varepsilon}
    E_{\bm A}
\end{align}
where 
\begin{align}
     E_{\bm A}
    := \mc E_n(M) \cap 
    \left\{
        \text{there exists }
        S\subset[n],\ |S|=k,
        \text{ such that }
        \langle\bm A,\bm W_S\rangle_{\rm F}
        >
        h(\bm A)+\frac{\varepsilon}{2}
    \right\}.
\end{align}
Indeed, \eqref{inclusion} follows from \eqref{variatonal-formula} and the fact that the mapping
\[
\bm A \to \langle \bm A, \bm G \rangle - \sup_{\bm X \in \overline{\Gamma}^{(\delta)}_{k,\beta,\mu}} \langle \bm A, \bm X \rangle
\]
is Lipschitz in the Frobenius norm, with constant no larger than
\[
kM+ \sup_{\bm Q \in \overline{\Gamma}^{(\delta)}_{k,\beta,\mu}  } \| \bm Q \|_{\rm F}.
\]
Thus
\begin{align*}
\mb P\left( \sup_{\bm G\in\mc C_{n,k}^{\rm Wishart}} d\left( \bm G, \overline{\Gamma}^{(\delta)}_{k,\beta,\mu} \right) > \varepsilon  \right)
 &  \leq \mb P\lb \mc E_n(M)^c \rb + \sum_{\bm A\in\mc N_\varepsilon} \mb P(E_{\bm A}) \\
 & \leq \mb P\lb \mc E_n(M)^c \rb + |\mc N_\ve| \cdot \max_{\bm A \in \mc N_\ve} \la \mb P(E_{\bm A}) \ra.
\end{align*}
Note that $|\mc N_\ve|<\infty$. Therefore, to get \eqref{upper-Wishart-3}, it suffices to show that with $h$ as in \eqref{h(A)} and
\[
\widetilde{E}_{\bm A}:= \la  \text{there exists }
        S\subset[n],\ |S|=k,
        \text{ such that }
        \langle\bm A,\bm W_S\rangle_{\rm F}
        >
        h(\bm A)+\frac{\varepsilon}{2} \ra,
\]
we have 
\[
\mb P  \lb \widetilde{E}_{\bm A} \rb \to 0, \qquad \text{for all $\bm A \in \mc N_\ve$}.
\]

\underline{\it Step 3: Bounding the probabilities of the events $\widetilde{E}_{\bm A}$.}
Fix \(\bm A\in\mc N_\varepsilon\), and put $q_{\bm A} :=\mb P\lb \widetilde{E}_{\bm A} \rb$. If \(q_{\bm A}=0\), there is nothing to prove. Assume that \(q_{\bm A}>0\). For a realization $\bm X$, put
\[
\mc V_{\bm A} \lb \bm X \rb:= \la  S \subset [n]: |S|=k, \langle\bm A,\bm W_S\rangle_{\rm F}
        >
        h(\bm A)+\frac{\varepsilon}{2} \ra
\]
where $\bm W_S$ is as in \eqref{W_S}.

We will show that $q_{\bm A}$ tends to zero. Define the probability measure on $\mb R^k$
\begin{align} \label{nu^A}
\nu^{\bm A} \lb B \rb:= \mb E \left[  \frac{1}{\left| \mc V_{\bm A} \lb \bm X \rb \right|}  \cdot \sum_{S \in \mc V_{\bm A} \lb \bm X \rb} \frac 1p \cdot \sum_{i=1}^p \bm 1_{\la \xi_{i,S} \in B  \ra}  \Big|  \widetilde{E}_{\bm A} \right]
\end{align}
for all Borel sets $B \subset \mb R^k$.

The law of $\nu^{\bm A}$ can equivalently be described by the following sampling scheme.
\begin{enumerate}
    \item Conditional on $\bm X= \lb \xi_{ij}; 1\leq i \leq p, 1 \leq j \leq n \rb$ and $\widetilde{E}_{\bm A}$, draw $\tilde{S}$ uniformly from $\mc V_{\bm A} \lb \bm X \rb$.
    \item Independently draw $I \sim \mbox{Unif}[p]$.
    \item Set 
    \[
    \bm Y_{\bm A}:= \lb \xi_{I,s_1}, \dots, \xi_{I,s_k} \rb \in \mb R^k, \qquad \text{where $\tilde{S}=(s_1,s_2,\dots,s_k)$}.
    \]
    Define $\nu^{\bm A}:= \mbox{Law} \lb \bm Y_{\bm A} \rb$.
\end{enumerate}
From the sampling scheme above, it is easy to check that, conditional on $\lb \bm X, \tilde{S} \rb$,
\[
\mb E \left[  \bm Y_{\bm A} \bm Y_{\bm A}^\top \Big| \bm X, \tilde{S} \right] =  \frac{1}{p} \sum_{i=1}^p 
\bm X_{i, \tilde{S}} \bm X_{i, \tilde{S}}^\top = \bm W_{\tilde{S}}.
\]
Thus
\begin{align} \label{upper-contradiction}
\Big\langle \bm A, \mfQ \lb \nu^{\bm A} \rb \Big\rangle  =  \mb E \lb \Big\langle \bm A,  \bm W_{\tilde{S}} \Big\rangle \Big|
\widetilde{E}_{\bm A} \rb \geq h \lb \bm A \rb + \ve/2.
\end{align}
For a subset $U \subset [k]$, let $\nu^{\bm A}_U$ denote the marginal of $\nu^{\bm A}$ on the coordinates in $U$. For all subsets $U \subset [k]$ and Borel sets $B \subset \mb R^{|U|}$, the crude bound in \eqref{nu^A} gives
\begin{align*}
    \nu^{\bm A}_{U}(B) \leq  \binom{n}{|U|} \cdot  \frac{\mu^{\otimes |U|}\, (B)}{q_{\bm A}}.
\end{align*}
Thus
\[
\frac{d \nu_U^{\bm A}}{d \mu^{\otimes |U|}} \ \text{exists for all $|U| \subset [k]$}.
\]
Fix $U \subset [k]$ with $|U|=s$. Let us bound the KL divergence between $\nu^{\bm A}_{U}$ and $\mu^{\otimes s}$. Put
\[
\bm Z_{\bm A, U}:= \lb \bm X_{1,U}, \dots, \bm X_{p,U}  \rb \in  \lb \mb R^{s} \rb^p. 
\]

Let $\rho_{\bm A, U}$ denote the law of $\bm Z_{\bm A, U}$ and $\rho_{\bm A, U, i}$ the law of $\bm X_{i,U}$, which is a probability measure on $\mb R^{s}$. With this notation,
\[
\nu^{\bm A}_U = \frac{1}{p} \sum_{i=1}^p \rho_{\bm A, U, i}.
\]
For a Borel set $C \subset \lb \mb R^{s} \rb^p$, we have 
\begin{align*}
    \rho_{\bm A, U } \lb C \rb = \frac{1}{q_{\bm A}} \cdot \mb E \left[  \bm 1_{\widetilde{E}_{\bm A}} \cdot \frac{1}{\left| \mc V_{\bm A} \lb \bm X \rb \right|}  \cdot \sum_{S \in \mc V_{\bm A}(\bm X)} \bm 1_{\la \bm Z_{\bm A, U} \in C \ra} \right] 
    \leq \binom{n}{s} \cdot \frac{1}{q_{\bm A}} \cdot \mu^{\otimes ps} \lb C \rb. 
\end{align*}
Consequently,
\[
D \lb \rho_{\bm A, U} \Big\| \mu^{\otimes ps}  \rb \leq s\log n + \log \lb 1/ q_{\bm A} \rb.
\]
Moreover, the chain rule for KL divergence (Theorem 2.16(c) in \cite{polyanskiy2025information}) yields
\[
D \lb \rho_{\bm A, U} \Big\| \mu^{\otimes ps}  \rb = D \lb \rho_{\bm A, U} \Big\| \bigotimes_{i=1}^p \rho_{\bm A, U, i}   \rb
+ \sum_{i=1}^p D \lb \rho_{\bm A, U, i} \Big\| \mu^{\otimes s}  \rb \geq \sum_{i=1}^p D \lb \rho_{\bm A, U, i} \Big\| \mu^{\otimes s}  \rb.
\]
Thus
\begin{align*}
    D \lb \nu^{\bm A}_U \Big\| \mu^{\otimes s} \rb \leq \frac{1}{p} \cdot \sum_{i=1}^p  D \lb \rho_{\bm A, U, i} \Big\| \mu^{\otimes s}  \rb
    \leq \frac{1}{p} \cdot D \lb \rho_{\bm A, U} \Big\| \mu^{\otimes ps}  \rb \leq \frac{s \log n}{p} + \frac{1}{p} \cdot \log \lb 1/ q_{\bm A} \rb
\end{align*}
for all $U \subset [k]$ with $|U|=s$, where the first inequality follows from the convexity of KL divergence.

Now suppose that $q_A \geq \widetilde{\ve} $ along a subsequence $\la n_k; k \geq 1 \ra$ for some $\widetilde{\ve}>0$. Then
\[
\frac{s\log \lb n_k \rb}{p_{n_k}} + \frac{1}{p_{n_k}} \cdot \log \lb 1/ q_{\bm A} \rb \leq \frac{s+\delta}{\beta}
\]
for all sufficiently large $n_k$, since $p/\log n \to \beta>0$. This implies that $\nu^{\bm A} \in \mc M^{(\delta)}_{k,\beta,\mu}$ and hence that
\[
\mfQ \lb \nu^{\bm A}  \rb \in  \overline{\Gamma}^{(\delta)}_{k,\beta,\mu} . 
\]
Consequently, by \eqref{variatonal-formula},
\begin{align*}
0 = d \lb \mfQ \lb \nu^{\bm A}  \rb,   \overline{\Gamma}^{(\delta)}_{k,\beta,\mu}  \rb \geq \Big \langle \bm A, \mfQ \lb \nu^{\bm A}  \rb \Big\rangle - h(\bm A),
\end{align*}
which contradicts \eqref{upper-contradiction}. Thus, $q_{\bm A}\to 0$, which yields \eqref{upper-Wishart-3} and, in turn, \eqref{upper-bound-Wishart}.

\subsection{Proof of \eqref{lower-bound-Wishart}}
The proof is similar to that of \eqref{lower}. As in the proof of \eqref{lower}, we may assume strict constraints:
\begin{align} \label{kappa-nu}
    \kappa_\nu := \min_{\varnothing\neq U\subseteq[k]}  \left\{ |U|-\beta D_U(\nu) \right\} >0
\end{align}
where $D_U(\nu)$ is as in \eqref{D_U(nu)}. 

Since $   \overline{\Gamma}^{(\delta)}_{k,\beta,\mu}$ is compact and the distance $d$ is $1$-Lipschitz,  one can use Newey's theorem (Theorem 1 in \cite{newey1991uniform}) to reduce uniform convergence in probability to pointwise convergence in probability. Therefore, it suffices to prove that for every fixed $\eta>0$, 
\begin{align}
    \mb P\left( d\left( \mc C_{n,k}^{\rm Wishart}, \mfQ(\nu)\right)>\eta \right) \to 0
\end{align}
where $\nu \in \mc M_{k,\beta,\mu}$ satisfies \eqref{kappa-nu}.

For every nonempty $U\subseteq[k]$, define
\begin{align} \label{f_U}
    f_U
    :=
    \frac{d\nu_U}{d\mu^{\otimes |U|}}.
\end{align}
We denote $f_{\la 1,2.\dots,k \ra}$ by $f_{[k]}$. Such densities exist because $D_U(\nu)<\infty$. We use the convention
$\log 0=-\infty$. Moreover,
\begin{equation}
\label{eq:Wishart-log-density-integrable}
    \log f_U\in L^1(\nu_U),
    \qquad
    \varnothing\neq U\subseteq[k].
\end{equation}
Indeed, since $D_U(\nu)<\infty$, we have
\begin{align*}
    \int \left| \log f_U  \right| \, d\nu_U &= \int f_U \cdot \left| \log f_U  \right| \, d\mu^{\otimes U} \\
    &= \int_{\la f_U \leq 1 \ra} f_U \cdot \left| \log f_U  \right| \, d\mu^{\otimes U} + \int_{\la f_U > 1 \ra} f_U \cdot \left| \log f_U  \right| \, d\mu^{\otimes U} \\
    & \leq \sup_{x \in (0,1)} x\cdot|\log x| + D_U(\nu) + \sup_{x \in (0,1)} x\cdot|\log x|<\infty.
\end{align*}
Recall the partition $\widetilde{V}$ in \eqref{V-tilde} and $\bm W$ in \eqref{W_S}. For $\bm i= \lb i_1, \dots, i_k \rb \in \widetilde{V}$ and $U \subset [\bm i]$, put
\begin{align*}
    \bm Y^{\bm i}_a &:= \lb \xi_{a, i_1}, \dots, \xi_{a,i_k} \rb, \qquad 1\leq a\leq p\\ 
    \bm W_{\bm i}&:= \bm W_{\la i_1,\dots, i_k \ra} = \frac{1}{p} \cdot \sum_{a=1}^p \bm Y^{\bm i}_a \lb \bm Y^{\bm i}_a \rb^\top \in \mc C^{\rm Wishart}_{n,k}, \\
    L_{\bm i, U} &:= \frac{1}{p} \cdot \sum_{a=1}^p \log f_U \lb \bm Y^{\bm i}_{a, U} \rb.
\end{align*}
Here $\bm Y^{\bm i}_{a, U}$ is the vector formed by taking the coordinates in $\bm Y^{\bm i}_{a}$ that belong to $U$. Pick $\tau \in (0, \kappa_\nu \beta^{-1}/10^3)$ and define the events
\[
E_{\bm i}:= \la \left\| \bm W_{\bm i} - \mfQ \lb \nu \rb  \right\|_{\rm F} < \eta   \ra \cap \lb \bigcap_{ \varnothing \neq U \subset [\bm i]} \la \left| L_{\bm i,U} - D_U(\nu) \right| < \tau  \ra \rb.
\]
As in the proof of \eqref{lower}, we impose additional constraints on $\bm E_{\bm i}$ to control the overlap probabilities. With a slight abuse of notation, paralleling the proof of \eqref{lower}, we also define
\[
N_n = \sum_{\bm i \in \widetilde{V}} \bm 1_{E_{\bm i}}. 
\]
It suffices to show that $\mb P \lb N_n \geq 1 \rb \to 1$. As in the proof of \eqref{lower}, we only need to show that
\[
\frac{\mb E N_n^2}{\lb \mb E N_n \rb^2} -1 \to 0. 
\]

\underline{\it Step 1: Lower bound for $\mb E N_n$.} We first show that $\mb E N_n \to \infty$. Fix an arbitrary $\bm i= \lb i_1,\dots, i_k \rb \in \widetilde{V}$ and write
\begin{align*}
  \mb E N_n =   \left| \widetilde{V} \right| \cdot \mb P_{\mu^{\otimes pk}} \lb E_{\bm i} \rb &\geq m_n^k \cdot  \mb P_{\mu^{\otimes pk}} \lb E_{\bm i} \cap \la \prod_{a=1}^p f_{[\bm i]} \lb \bm Y^{\bm i}_{a} \rb > 0  \ra \rb \\
  &=  m_n^k \cdot \mb E_{\nu^{\otimes p}} \left[ \frac{1}{ \prod_{a=1}^p f_{[\bm i]} \lb \bm Y^{\bm i}_{a} \rb} \cdot \bm 1_{E_{\bm i}} \right]
\end{align*}
where the first line follows from the fact that $|\widetilde{V}|=m_n^k$. 

On $E_{\bm i}$, we have 
\[
 \prod_{a=1}^p f_{[\bm i]} \lb \bm Y^{\bm i}_{a} \rb = \exp \left[ p \cdot L_{\bm i, [i]} \right] \leq \exp \la p \left[ D \lb \nu \Big\| \mu^{\otimes k} \rb + \tau \right] \ra.
\]
Thus
\begin{align*}
\mb E N_n &\geq m_n^k \cdot \exp \la - p \left[ D \lb \nu \Big\| \mu^{\otimes k} \rb + \tau \right] \ra \cdot \mb P_{\nu^{\otimes p}} \lb E_{\bm i} \rb \\
&= \exp \la k\cdot \log (n) - p \left[ D \lb \nu \Big\| \mu^{\otimes k} \rb + \tau \right] + O(1) \ra \cdot \mb P_{\nu^{\otimes p}} \lb E_{\bm i} \rb \\
&= \exp \la \log n \cdot \left[ k - \beta \cdot D \lb \nu \Big\| \mu^{\otimes k} \rb - \tau \beta +o(1)  \right]  \ra  \cdot \mb P_{\nu^{\otimes p}} \lb E_{\bm i} \rb \\
&= \exp \left[ \Omega \lb \log n \rb \right]  \cdot \mb P_{\nu^{\otimes p}} \lb E_{\bm i} \rb 
\end{align*}
where the last line follows from the fact that $\tau \in (0, \kappa_\nu \beta^{-1}/10^3)$ and
\[
 k - \beta \cdot D \lb \nu \Big\| \mu^{\otimes k} \rb \geq \kappa_\nu \geq 2\tau\beta.
\]

By the LLN under $\nu^{\otimes p}$, we have $\mb P_{\nu^{\otimes p}} \lb E_{\bm i} \rb  \to 1$. Thus, $\mb E N_n \to \infty$. Moreover, the argument above also shows that
\[
 \mb P_{\mu^{\otimes pk}} \lb E_{\bm i} \rb \geq \frac{1}{2} \cdot\exp \la - p \left[ D \lb \nu \Big\| \mu^{\otimes k} \rb + \tau \right] \ra
\]
for all sufficiently large $n$.


\underline{\it Step 2: Bounding the overlap probabilities.}
For $\bm i,\bm j\in\widetilde V$, define the overlap set, as in the
proof of \eqref{lower}, by
\[
    U(\bm i,\bm j)
    :=
    \left\{
        r\in[k]:
        i_r=j_r
    \right\}.
\]
As in the proof of \eqref{lower}, any overlap between $\bm i$ and $\bm j$ must occur at the same coordinate. Fix a nonempty proper subset $U\subsetneq[k]$ and suppose that $U(\bm i,\bm j)=U$.

We will show that
\begin{align} \label{overlap-Wishart}
    \mb P \lb  E_{\bm i} \cap E_{\bm j} \rb \leq \exp \la  -p \cdot \left[ 2 D \lb \nu \Big\| \mu^{\otimes k} \rb - D_U(\nu) - 3\tau \right] \ra.
\end{align}
To show \eqref{overlap-Wishart}, write
\[
\mb P \lb  E_{\bm i} \cap E_{\bm j} \rb \leq \mb P \lb S_{\bm i} \cap S_{U} \cap S_{\bm j}  \rb
\]
where 
\begin{align*}
    S_{\bm i}:&= \la \left| L_{\bm i,[\bm i]} - D \lb \nu \Big\| \mu^{\otimes k} \rb \right| < \tau \ra, \\
    S_U:&= \la \left| L_{\bm i, U} - D_U \lb \nu \rb \right| < \tau \ra,    \\
    S_{\bm j} :&= \la \left| L_{\bm j,[\bm j]} - D \lb \nu \Big\| \mu^{\otimes k} \rb \right| < \tau \ra.
\end{align*}
Observe that
\begin{align*}
    \mb P \lb S_{\bm i} \cap S_{U} \cap S_{\bm j}  \rb 
    &\leq \mb P \lb L_{\bm i,[\bm i]} + L_{\bm j,[\bm j]} - L_{\bm i, U} > 2D \lb \nu \Big\| \mu^{\otimes k} \rb - D_U(\nu) -3\tau  \rb \\
    &\leq \exp \la  -p \cdot \left[ 2 D \lb \nu \Big\| \mu^{\otimes k} \rb - D_U(\nu) - 3\tau \right] \ra
    \cdot \mb E \la \exp \left[ p \cdot \lb L_{\bm i,[\bm i]} + L_{\bm j,[\bm j]} - L_{\bm i, U} \rb \right] \ra
\end{align*}
To prove \eqref{overlap-Wishart}, it suffices to show that the expectation in the last display is one. Indeed, with $f_U$ as in \eqref{f_U}, we have
\begin{align*}
    \mb E \la \exp \left[ p \cdot \lb L_{\bm i,[\bm i]} + L_{\bm j,[\bm j]} - L_{\bm i, U} \rb \right] \ra
    =& \mb E \left[ \prod_{a=1}^p
    \frac{
        f_{[\bm i]}\left(\bm Y_a^{\bm i}\right)
        f_{[\bm j]}\left(\bm Y_a^{\bm j}\right)
    }{
        f_U\left(\bm Y_{a,U}^{\bm i}\right)} \right] \\
    =& \prod_{a=1}^p \mb E \left[  \frac{
        f_{[\bm i]}\left(\bm Y_a^{\bm i}\right)
        f_{[\bm j]}\left(\bm Y_a^{\bm j}\right)
    }{
        f_U\left(\bm Y_{a,U}^{\bm i}\right)}  \right] \\
    =& \prod_{a=1}^p \mb E \left[  \frac{
        f_{[\bm i]}\left(  \bm Y_{a,U}^{\bm i}, \bm Y_{a,U^c}^{\bm i} \right)
        f_{[\bm j]}\left( \bm Y_{a,U}^{\bm j}, \bm Y_{a,U^c}^{\bm j} \right)
    }{
        f_U\left(\bm Y_{a,U}^{\bm i}\right) }  \right] \\
    =& \prod_{a=1}^p\mb E \left[ f_U\left(\bm Y_{a,U}^{\bm i}\right)   \right] =1,
\end{align*}
where the last equality follows from the fact that $f_U$ is a density and, in particular, that $f_U$ integrates to one with respect to $\mu^{\otimes |U|}$. Thus, \eqref{overlap-Wishart} holds and, in turn, yields
\begin{align*}
    0 \leq \frac{\mb E N_n^2}{\lb \mb E N_n \rb^2} -1 &\leq \frac{1}{\mb E N_n} + \frac{\sum_{\varnothing\neq U\subsetneq[k]}
    m_n^{2k-|U|}
    \exp\left\{
        -p\left[
            2D\left(
                \nu\,\middle\|\,\mu^{\otimes k}
            \right)
            -
            D_U(\nu)
            -
            3\tau
        \right]
    \right\}}{\lb \mb E N_n \rb^2} \\
    &= o(1) +  \frac{\sum_{\varnothing\neq U\subsetneq[k]}
    m_n^{-|U|}
    \exp\left\{
        -p\left[
            2D\left(
                \nu\,\middle\|\,\mu^{\otimes k}
            \right)
            -
            D_U(\nu)
            -
            3\tau
        \right]
    \right\}}{\mb P_{\mu^{\otimes pk}} \lb E_{\bm i} \rb^2}.  
\end{align*}
The last fraction can be bounded as
\[
\frac{  \sum_{\varnothing\neq U\subsetneq[k]}
    m_n^{-|U|}
    \exp\left\{
        -p\left[
            2D\left(
                \nu\,\middle\|\,\mu^{\otimes k}
            \right)
            -
            D_U(\nu)
            -
            3\tau
        \right]
    \right\}}{\mb P_{\mu^{\otimes pk}} \lb E_{\bm i} \rb^2}
    \leq 
     \sum_{\varnothing\neq U\subsetneq[k]}
     m_n^{-|U|} \cdot \exp \Big\{ p \cdot \left[ D_U(\nu) + 5\tau \right] \Big\}
\]
The last term tends to zero because for every $U \neq \varnothing$,
\begin{align*}
    m_n^{-|U|} \cdot \exp \Big\{ p \cdot \left[ D_U(\nu) + 5\tau \right] \Big\}
    &=
   \exp \Big\{ \log n \cdot  \left[
        -|U|
        +
        \beta D_U(\nu)
        +
        5\beta\tau
        +
        o(1)
    \right]  \Big\} \\
    &\leq \exp \lb -\frac{\kappa_\nu}{2} \cdot \log n \rb \to 0. 
\end{align*}
This completes the proof of \eqref{lower-bound-Wishart}. $\hfill$ $\square$

\section{Proof of Theorem~\ref{p/logn to 0}}

Put
\[
    Y:=\xi^2,
    \qquad
    \kappa:=\kappa_\mu.
\]
By assumption, for every \(\delta>0\), there exists \(x_\delta<\infty\)
such that
\begin{equation}
\label{eq:Wishart-beta-zero-tail-bounds}
    e^{-(\kappa+\delta)x}
    \leq
    \mb P(Y>x)
    \leq
    e^{-(\kappa-\delta)x},
    \qquad
    x\geq x_\delta.
\end{equation}
This implies that
\begin{equation}
\label{eq:Wishart-beta-zero-mgf}
    \mb E e^{tY}<\infty
    \qquad
    \text{for every }0<t<\kappa.
\end{equation}
Fix \(\varepsilon\in(0,1)\). We prove the upper and lower bounds
separately, beginning with the upper bound. For \(1\leq j\leq n\), put
\[
    Z_j
    :=
    \sum_{a=1}^p\xi_{aj}^2.
\]
Choose \(t\) such that
$
    \frac{\kappa}{1+\varepsilon}
    <
    t
    <
    \kappa
$
and write
\[
    M(t):=\mb E e^{tY}<\infty.
\]
By Chernoff's inequality and a union bound,
\begin{align*}
    \mb P\left(
        \max_{1\leq j\leq n}Z_j
        >
        \frac{1+\varepsilon}{\kappa}\log n
    \right)
    &\leq
    n
    \exp\left\{
        -\frac{t(1+\varepsilon)}{\kappa}\log n
    \right\}
    M(t)^p
    \\
    &=
    \exp\left\{
        -
        \left[
            \frac{t(1+\varepsilon)}{\kappa}
            -1
            +o(1)
        \right]\log n
    \right\}
    \to0,
\end{align*}
where we used \(p=o(\log n)\).

For every \(S\subset[n]\) with \(|S|=k\), the Gram matrix
\(\bm X_{[S]}^\top\bm X_{[S]}\) is positive semidefinite, and hence
\begin{align*}
    \lmax\left(
        \bm X_{[S]}^\top\bm X_{[S]}
    \right)
    &\leq
    \operatorname{tr}\left(
        \bm X_{[S]}^\top\bm X_{[S]}
    \right)
    =
    \sum_{j\in S}Z_j
    \leq
    k\max_{1\leq j\leq n}Z_j.
\end{align*}
Consequently,
\[
    \mb P\left(
        \frac{T_{n,k}}{k\log n}
        >
        \frac{1+\varepsilon}{\kappa}
    \right)
    \to0.
\]
We now prove the lower bound. Fix $\varepsilon\in(0,1)$ and put
\[
    u_n
    :=
    \frac{1-\varepsilon}{\kappa_\mu}\log n,
    \qquad
    q_n
    :=
    \mb P\left(
        \xi^2>u_n
    \right).
\]
By the assumed tail limit,
\[
    q_n
    =
    n^{-(1-\varepsilon)+o(1)},
    \qquad
    nq_n
    =
    n^{\varepsilon+o(1)}
    \to\infty.
\]
Let
\[
    N_n
    :=
    \sum_{j=1}^n
    \bm 1_{\{\xi_{1j}^2>u_n\}}
    \sim
    \operatorname{Binomial}(n,q_n).
\]
Since $k$ is fixed, Chebyshev's inequality gives
\[
    \mb P(N_n<k)
    \leq
    \frac{\Var(N_n)}{(nq_n-k)^2}
    \leq
    \frac{nq_n}{(nq_n-k)^2}
    \to0.
\]
On $\{N_n\geq k\}$, choose $S\subset[n]$, $|S|=k$, such that
$\xi_{1j}^2>u_n$ for every $j\in S$. We have
\[
    \bm X_{[S]}^\top\bm X_{[S]}
    =
    \sum_{a=1}^p
    \bm Y_{a,S}\bm Y_{a,S}^\top
    \succeq
    \bm Y_{1,S}\bm Y_{1,S}^\top.
\]
Consequently,
\[
    T_{n,k}
    \geq
    \left\|
        \bm Y_{1,S}
    \right\|_2^2
    =
    \sum_{j\in S}\xi_{1j}^2
    >
    ku_n
    =
    \frac{k(1-\varepsilon)}{\kappa_\mu}\log n.
\]
Therefore,
\[
    \mb P\left(
        \frac{T_{n,k}}{k\log n}
        <
        \frac{1-\varepsilon}{\kappa_\mu}
    \right)
    \to0.
\]
This completes the proof. $\hfill$ $\square$

\section{Proof of Corollaries \ref{co:Wishart-Gaussian} and \ref{co:restricted-isometry}}

\subsection{Proof of Corollary \ref{co:Wishart-Gaussian}}
With the same notation as in Theorem \ref{thm:Wishart-limit}, we only need to show that
\begin{align} \label{mu=Gaussian}
\max_{\bm Q \in \Gamma_{k,\beta, \mu}} \la \lmax \lb \bm Q \rb \ra \Big|_{\mu \equiv N(0,1)} = \lambda_{k,\beta}
\end{align}
where $\lambda_{k,\beta}$ is the unique solution in $(1,\infty)$ of the equation 
\[
\lambda - \log \lambda -1 = \frac{2k}{\beta}
\]
Indeed, a direct calculation shows that the equation above has two positive solutions: one is in $(0,1)$ and one is in $(1,\infty)$.

We begin with the upper bound in \eqref{mu=Gaussian}. Take $t \in (0,1/2)$, $\nu \in \mc M_{k,\beta,\mu}$, and a unit vector $\bm v \in \mb R^k$, and write
\begin{align*}
    t\cdot \bm v^\top \mfQ \lb \nu \rb \bm v = t \int \lb \bm v^\top \bm x \rb^2 \, d \nu(\bm x) 
    &\leq D \lb \nu \Big\| N \lb \bm 0, \bm I_k \rb \rb + \log \la \mb E_{\bm x \sim N \lb \bm 0, \bm I_k \rb} \exp \left[ t \cdot \lb \bm v^\top \bm x \rb^2 \right] \ra \\
    &\leq \frac{k}{\beta} - \frac{\log \lb 1 -2t \rb}{2} 
\end{align*}
where the first inequality follows from Jensen's inequality; see \eqref{Jesen} below for more details. Consequently,
\[
\max_{\bm Q \in \Gamma_{k,\beta, \mu}} \la \lmax \lb \bm Q \rb \ra \Big|_{\mu \equiv N(0,1)}
\leq \inf_{t \in (0,1/2)} \frac{k/\beta - \frac{\log(1-2t)}{2}}{t} .
\]
The infimum in the last display is achieved at some $t_0 \in (0,1/2)$ that satisfies
\[
\frac{t_0}{1-2t_0} - \frac{k}{\beta} + \frac{\log(1-2t_0)}{2} = 0.
\]
Put $\lambda_{k,\beta}= 1/(1-2t_0)$. Then $\lambda_{k,\beta} \geq 1$ and $\lambda_{k,\beta} - \log \lambda_{k,\beta}-1=2k/\beta$, and a direct calculation yields
\[
\frac{k/\beta - \frac{\log(1-2t_0)}{2}}{t_0} = \lambda_{k,\beta}.
\]
Thus
\[
\max_{\bm Q \in \Gamma_{k,\beta, \mu}} \la \lmax \lb \bm Q \rb \ra \Big|_{\mu \equiv N(0,1)} \leq \lambda_{k,\beta}.
\]
For the lower bound in \eqref{mu=Gaussian}, define
\[
\bm v_\star:= \frac{1}{\sqrt{k}} \bm 1_{k}, \qquad \bm Q_\star =  \bm I_k + \lb \lambda_{k,\beta}-1 \rb \bm v_\star \bm v_\star^\top,
\]
and take $\mu_\star = N \lb \bm 0, \bm Q_\star \rb$. 

A direct calculation shows that $\lmax \lb \bm Q_\star \rb =  \lambda_{k,\beta}$. It remains to check that $\mu_\star \in \mc M_{k,\beta,\mu}$. To this end, take $U \subset [k]$ with $|U|=s$ and apply formula (6.32) in \cite{murphy2022probabilistic} to obtain
\[
D \lb N \lb \bm 0, (\bm Q_\star)_U \rb \Big\| N \lb \bm 0, \bm I_s \rb \rb
= \frac 12 \left[ \frac{s}{k} \lb \lambda_{k,\beta} -1 \rb - \log \lb 1 + \frac{s}{k} \lb \lambda_{k,\beta} -1 \rb \rb \right].
\]
Note that the function $g(x)= x - \log(1+x)$ is convex with $g(0)=0$, so $g(sx/k) \leq (s/k)\cdot g(x)$. Therefore, setting $x=\lambda_{k,\beta}-1$ gives
\[
\frac 12 \left[ \frac{s}{k} \lb \lambda_{k,\beta} -1 \rb - \log \lb 1 + \frac{s}{k} \lb \lambda_{k,\beta} -1 \rb \rb \right] 
\leq \frac{s}{2k} \left[ \lambda_{k,\beta} - \log \lambda_{k,\beta}  -1\right] = \frac{s}{\beta}.
\]
Consequently, $\mu_\star \in \mc M_{k,\beta,\mu}$ and we obtain \eqref{mu=Gaussian}. $\hfill$ $\square$

\subsection{Proof of Corollary \ref{co:restricted-isometry}}

Set $F\lb\bm Q\rb:=\|\bm Q-\bm I_k\|_{\op}$. Since
\[
    \left|F\lb\bm Q\rb-F\lb\bm Q'\rb\right|
    \leq\|\bm Q-\bm Q'\|_{\op}
    \leq\|\bm Q-\bm Q'\|_{\rm F},
\]
the map $F$ is $1$-Lipschitz with respect to the Frobenius norm. Therefore,
\[
\left|
\max_{\bm Q\in\mc C^{\rm Wishart}_{n,k}}F\lb\bm Q\rb
-
\sup_{\bm Q\in\Gamma_{k,\beta,\mu}}F\lb\bm Q\rb
\right|
\leq d_{\rm H}\lb\mc C^{\rm Wishart}_{n,k},\Gamma_{k,\beta,\mu}\rb.
\]
The first assertion follows from Theorem \ref{thm:Wishart-limit}, Lemma \ref{convexity}, and the representation \eqref{RIP-constant}.

Now suppose that $\mu=N(0,1)$ and write $\phi(t):=t-\log t-1$. Fix $\nu\in\mc M_{k,\beta,\mu}$, let $\bm Q=\int \bm x\bm x^\top\,d\nu$, and take a unit vector $\bm v\in\mb R^k$. Let $\nu_{\bm v}$ denote the pushforward of $\nu$ under the map $\bm x\mapsto\bm v^\top\bm x$. Data processing for relative entropy and the Gaussian maximum-entropy bound at fixed second moment give
\[
    \frac{1}{2}\phi\lb\bm v^\top\bm Q\bm v\rb
    \leq D\lb\nu_{\bm v}\,\big\|\,N\lb0,1\rb\rb
    \leq D\lb\nu\,\big\|\,N\lb\bm 0,\bm I_k\rb\rb
    \leq \frac{k}{\beta}.
\]
Let $\lambda_-\in(0,1)$ be the other solution of
$\phi(\lambda)=2k/\beta$. It follows that every $\bm Q\in\Gamma_{k,\beta,\mu}$ has all eigenvalues in $[\lambda_-,\lambda_{k,\beta}]$. Moreover,
\[
    1-\lambda_-<\lambda_{k,\beta}-1,
\]
because, with $s:=1-\lambda_-\in(0,1)$,
\[
    \phi(1+s)<\phi(1-s)=\frac{2k}{\beta}
    =\phi\lb\lambda_{k,\beta}\rb,
\]
and $\phi$ is strictly increasing on $(1,\infty)$. Hence
\[
    \|\bm Q-\bm I_k\|_{\op}\leq\lambda_{k,\beta}-1.
\]
To show that this upper bound is attained, put
\[
    \bm v_\star:=\frac{1}{\sqrt{k}}\bm 1_k,
    \qquad
    \bm Q_\star:=\bm I_k+\lb\lambda_{k,\beta}-1\rb\bm v_\star\bm v_\star^\top,
    \qquad
    \nu_\star:=N\lb\bm 0,\bm Q_\star\rb.
\]
For every nonempty $U\subset[k]$, writing $r=|U|$ and using the convexity of $\phi$, we have
\begin{align*}
D\lb \lb\nu_\star\rb_U\,\big\|\,N\lb\bm 0,\bm I_r\rb\rb
&=\frac12\phi\left(1+\frac{r}{k}\lb\lambda_{k,\beta}-1\rb\right)\\
&\leq\frac{r}{2k}\phi\lb\lambda_{k,\beta}\rb
=\frac{r}{\beta}.
\end{align*}
Thus, $\nu_\star\in\mc M_{k,\beta,\mu}$ and $\bm Q_\star\in\Gamma_{k,\beta,\mu}$. Since
\[
    \|\bm Q_\star-\bm I_k\|_{\op}=\lambda_{k,\beta}-1,
\]
the claimed formula for $\Delta_{k,\beta,\mu}$ follows.

Finally, $\lambda_{k,\beta}-1<\delta$ is equivalent to
\[
    \beta>\frac{2k}{\delta-\log\lb1+\delta\rb}.
\]
The two probability limits follow from the convergence in probability proved above. $\hfill$ $\square$

\section{Technical lemmas}

\begin{lemma} \label{convexity}
Suppose \(X\) is the space of all symmetric matrices equipped with the
Frobenius norm. Let \(\mc C,\mc D\subset X\) be nonempty and bounded.
Then, for every convex, \(L\)-Lipschitz functional
\(F:X\to\mb R\),
\[
    \left|
        \sup_{\bm C\in\mc C}F(\bm C)
        -
        \sup_{\bm D\in\mc D}F(\bm D)
    \right|
    \leq
    L\cdot
    d_{\rm H}
    \left(
        \operatorname{conv}(\mc C),
        \operatorname{conv}(\mc D)
    \right).
\]
Here, \(L\)-Lipschitz continuity is understood with respect to the
Frobenius norm, that is,
\[
    \left|F(\bm C)-F(\bm D)\right|
    \leq
    L\|\bm C-\bm D\|_{\rm F}.
\]
\end{lemma}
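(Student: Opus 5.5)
The argument has two steps: replace each supremum by the supremum over the convex hull, then compare the two hulls pointwise using the Lipschitz bound.

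\emph{Step 1: convexity lets us pass to convex hulls.} Every $\bm Z\in\operatorname{conv}(\mc C)$ is a finite convex combination $\sum_i\theta_i\bm C_i$ with $\bm C_i\in\mc C$. Convexity of $F$ gives
\[
F(\bm Z)\le\sum_i\theta_iF(\bm C_i)\le\sup_{\bm C\in\mc C}F(\bm C).
\]
Since $\mc C\subset\operatorname{conv}(\mc C)$, we obtain
\[
\sup_{\operatorname{conv}(\mc C)}F=\sup_{\mc C}F,
\]
and likewise for $\mc D$. Both suprema are finite: $\mc C,\mc D$ are bounded, and a Lipschitz $F$ is bounded on bounded sets.

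\emph{Step 2: pointwise comparison of the hulls.} Put $\delta:=d_{\rm H}(\operatorname{conv}(\mc C),\operatorname{conv}(\mc D))$, which is finite because both hulls are bounded. Fix $\bm Z\in\operatorname{conv}(\mc C)$ and $\epsilon>0$. By definition of the Hausdorff distance there is $\bm W\in\operatorname{conv}(\mc D)$ with $\|\bm Z-\bm W\|_{\rm F}\le\delta+\epsilon$. The Lipschitz bound then gives
\[
F(\bm Z)\le F(\bm W)+L(\delta+\epsilon)\le\sup_{\operatorname{conv}(\mc D)}F+L(\delta+\epsilon).
\]
Taking the supremum over $\bm Z$ and letting $\epsilon\to0$ yields $\sup_{\operatorname{conv}(\mc C)}F-\sup_{\operatorname{conv}(\mc D)}F\le L\delta$. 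Exchanging the roles of $\mc C$ and $\mc D$ gives the reverse inequality.

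Combining the two steps gives the claim. No step is a real obstacle. The only points to handle are that Step 1 needs convexity only through Jensen's inequality for finite combinations (so no closure is needed), and that the $\epsilon$ in Step 2 handles the infimum in $d(\bm Z,\operatorname{conv}(\mc D))$ possibly not being attained.
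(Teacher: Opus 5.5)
Your proof is correct and follows the paper's argument essentially step for step. You first use Jensen's inequality on finite convex combinations to identify the suprema over $\mathcal C$ and $\operatorname{conv}(\mathcal C)$ (and likewise for $\mathcal D$), then compare the two hulls pointwise through the Lipschitz bound with an $\varepsilon$-slack before symmetrizing; your remark that the suprema are finite because a Lipschitz $F$ is bounded on bounded sets is a small point the paper leaves implicit.
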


\noindent\textbf{Proof of Lemma \ref{convexity}.}
Convexity of \(F\) implies
\[
    \sup_{\bm A\in\operatorname{conv}(\mc C)}F(\bm A)
    =
    \sup_{\bm C\in\mc C}F(\bm C).
\]
Indeed, one inequality follows from
\(\mc C\subseteq\operatorname{conv}(\mc C)\). Conversely, if
\[
    \bm A=\sum_{i=1}^m\theta_i\bm C_i,
    \qquad
    \bm C_i\in\mc C,
    \qquad
    \theta_i\geq0,
    \qquad
    \sum_{i=1}^m\theta_i=1,
\]
then
\[
    F(\bm A)
    \leq
    \sum_{i=1}^m\theta_iF(\bm C_i)
    \leq
    \sup_{\bm C\in\mc C}F(\bm C).
\]
Similarly,
\[
    \sup_{\bm B\in\operatorname{conv}(\mc D)}F(\bm B)
    =
    \sup_{\bm D\in\mc D}F(\bm D).
\]

Put
\[
    \mc A:=\operatorname{conv}(\mc C),
    \qquad
    \mc B:=\operatorname{conv}(\mc D),
\]
and let
\[
    \delta:=d_{\rm H}(\mc A,\mc B).
\]
Fix \(\varepsilon>0\). For every \(\bm A\in\mc A\), there exists
\(\bm B_{\bm A}\in\mc B\) such that
\[
    \|\bm A-\bm B_{\bm A}\|_{\rm F}
    \leq
    d(\bm A,\mc B)+\varepsilon
    \leq
    \delta+\varepsilon.
\]
Therefore,
\[
\begin{aligned}
    F(\bm A)
    &\leq
    F(\bm B_{\bm A})
    +
    L\|\bm A-\bm B_{\bm A}\|_{\rm F} \\
    &\leq
    \sup_{\bm B\in\mc B}F(\bm B)
    +
    L(\delta+\varepsilon).
\end{aligned}
\]
Taking the supremum over \(\bm A\in\mc A\), and then letting
\(\varepsilon\downarrow0\), gives
\[
    \sup_{\bm A\in\mc A}F(\bm A)
    -
    \sup_{\bm B\in\mc B}F(\bm B)
    \leq
    L\delta.
\]
Interchanging \(\mc A\) and \(\mc B\) gives the reverse inequality.
Combining the two bounds and using the convex-hull identities proves
the result.
\(\hfill\square\)

\begin{lemma} \label{Gaussian bound}
  With $p_n \lb \bm i, \bm B, \ve \rb$ as in \eqref{p_n}, we have 
    \[
    p_n \lb \bm i, \bm B, \ve \rb \geq \exp \la -\mc I_{\la i_1,\dots,i_k  \ra} \lb \bm B \rb \cdot \log n - C\sqrt{\log n} \ra
    \]
    where $\bm i =  \lb i_1,\dots,i_k \rb$, $k \leq m $ and $C$ is a constant depending only on $ \bm B,  k, a, \ve$.
\end{lemma}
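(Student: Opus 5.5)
The plan is to exploit independence of the entries and reduce to one-dimensional Gaussian small-ball estimates. The event $E(\bm i,\bm B,\ve)$ is an intersection of $k(k+1)/2$ independent events, one for each entry $A_{i_ri_s}$ with $r\le s$. Hence
\[
p_n(\bm i,\bm B,\ve)=\prod_{r=1}^k \mb P\bigl(|A_{i_ri_r}-q_nB_{rr}|\le q_n\ve\bigr)\cdot\prod_{r<s}\mb P\bigl(|A_{i_ri_s}-q_nB_{rs}|\le q_n\ve\bigr).
\]
It will therefore suffice to prove, for a centered Gaussian $G$ with variance $\sigma^2\in\{1,a\}$ and a fixed real $b$,
\[
\log \mb P\bigl(|G-q_nb|\le q_n\ve\bigr)\ \ge\ -\frac{b^2}{\sigma^2}\log n - C\sqrt{\log n},
\]
where $C$ depends only on $b,\sigma,\ve$. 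Summing these bounds over the entries produces exactly $-\mc I_{\{i_1,\dots,i_k\}}(\bm B)\log n$, because $q_n^2b^2/(2\sigma^2)=(b^2/\sigma^2)\log n$. The error constants add to a single $C$ depending on $\bm B,k,a,\ve$.

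For the one-dimensional bound I take the subinterval $J:=[q_n|b|-1,\,q_n|b|]$ on the side of $q_nb$ nearer the origin; by symmetry I may assume $b\ge0$. Once $q_n\ve\ge1$, $J$ lies inside the window $[q_nb-q_n\ve,\,q_nb+q_n\ve]$, so
\[
\mb P(|G-q_nb|\le q_n\ve)\ \ge\ \int_J \frac{1}{\sqrt{2\pi}\sigma}e^{-x^2/(2\sigma^2)}\,dx\ \ge\ \frac{1}{\sqrt{2\pi}\sigma}\,e^{-q_n^2b^2/(2\sigma^2)}.
\]
This holds because the density on $J$ is at least its value at $x=q_nb$ and $J$ has length $1$. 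When $b<1/q_n$, the interval $J$ contains points near $0$ and the bound only improves. Taking logarithms gives
\[
-\frac{b^2}{\sigma^2}\log n-\log(\sqrt{2\pi}\sigma),
\]
which is even better than the required $-C\sqrt{\log n}$ error. The finitely many small $n$ with $q_n\ve<1$ are absorbed by enlarging $C$, since each probability is positive there. A slightly cruder alternative also yields the $\sqrt{\log n}$ rate: use the full window centered at $q_nb(1-\ve/|b|)$ and expand the square, which gives error terms of order $q_n\ve|b|\asymp\sqrt{\log n}$.

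There is no real obstacle here. The only care needed is uniformity: the resulting constant must not depend on $\bm i$, which holds because the law of the minor does not depend on $\bm i$. The constant also needs to cover $b=0$ entries and small $n$. Both are handled by the case distinction above together with the finiteness of the index set $\{(r,s):r\le s\}$.
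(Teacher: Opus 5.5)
Your argument is correct and is essentially the paper's proof: you factor $p_n(\bm i,\bm B,\varepsilon)$ over the independent entries and bound each one-dimensional window probability from below by a unit-length subinterval on which the Gaussian density is controlled pointwise. The only difference is the choice of interval. The paper uses the symmetric interval $[q_nb-1,q_nb+1]$ and evaluates the density at distance $q_n|b|+1$; this creates the cross term $q_n|b|/\sigma^2$ and hence the $\sqrt{\log n}$ error. Your one-sided interval toward the origin gives an $O(1)$ error instead.

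One small slip: when $0\le q_nb<1$ (e.g.\ $b=0$, where $J=[-1,0]$), points of $J$ can lie farther from the origin than $q_nb$. So the bound does not ``only improve''; it can lose a constant factor $e^{-1/(2\sigma^2)}$. This is harmless, since the factor is absorbed into $C$.
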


\noindent \textbf{Proof of Lemma \ref{Gaussian bound}.} We first record an elementary bound. Let $G \sim N(0,\sigma^2)$ and $b \in \mb R$. For a fixed $\ve>0$ and all $n$ sufficiently large, we have
\[
\mbox{P} \lb \left| \frac{G}{q_n} - b \right| \leq \ve  \rb \geq \mb P \lb |G-q_nb| \leq 1 \rb \geq \frac{2}{\sqrt{2\pi} \sigma} \cdot \exp \la -\frac{\lb q_n|b| +1 \rb^2}{2\sigma^2}  \ra.
\]
Therefore, using the bound above, 
\begin{align*}
    p_n \lb \bm i, \bm B, \ve \rb &= \prod_{r=1}^k \mb P \la \left| \frac{\bm A_{i_r i_r}}{q_n} - \bm B_{rr} \right| \leq \ve  \ra \times \prod_{1\leq r<s \leq k} \mb P \la  \left| \frac{\bm A_{i_r i_s}}{q_n} - \bm B_{rs} \right| \leq \ve \ra \\
    &\geq \lb \frac{2}{\sqrt{2 \pi a} } \rb^{k} \cdot \lb \frac{2}{\pi} \rb^{k(k-1)/4} \cdot \exp \la -\frac{q_n^2}{2}\cdot \mc I_{\la i_1,\dots,i_k \ra} \lb \bm B \rb - R_n  \ra
\end{align*}
where 
\[
R_n:= \frac{q_n}{a} \cdot \sum_{i=1}^k |B_{ii}| + q_n\cdot \sum_{r<s} |B_{rs}| + \frac{k}{2a} + \frac{k(k-1)}{4} \lesssim \sqrt{\log n}.
\]
for all large $n$.

The constant in brackets depends only on $\bm B, k, a, \ve$. This completes the proof. $\hfill$ $\square$

\begin{lemma} \label{lem: mc R bound}
    Recall $\mc R_z(\rho)$ in \eqref{mc R}. The following statements hold:
    \begin{itemize}
         \item The function $\rho \mapsto R_z(\rho)$ is nondecreasing in $\rho$.
        \item If $0 \leq \rho \leq 1/2$, then for some universal constant $C>0$ and $z\geq 1$,
        \[
        0 \leq R_z(\rho) -1 \leq Cz^2\rho \cdot \exp \lb \rho z^2 \rb.
        \]
        \item For all $0 \leq \rho \leq 1$, there exists a universal constant $C>0$ such that for all $z \geq 1$ 
        \[
         R_z(\rho) \leq Cz \cdot \exp \lb \frac{z^2\rho}{1+\rho} \rb.
        \]    
    \end{itemize}
\end{lemma}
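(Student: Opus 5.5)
We prove the three items of Lemma~\ref{lem: mc R bound} from two ingredients. The first is Plackett's identity for the standard bivariate normal. The second is the classical Mills-ratio bounds
\[
\frac{z}{1+z^2}\,\phi(z)\le\overline{\Phi}(z)\le \frac{\phi(z)}{z},\qquad z>0,
\]
where $\phi$ is the standard normal density.

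\emph{Monotonicity.} Write $P_z(\rho):=\mb P(G_1>z,G_2>z)$. Let $\phi_2(\cdot,\cdot;r)$ denote the centred bivariate normal density with unit variances and correlation $r$. Plackett's identity (equivalently, $\partial_r\phi_2=\partial_x\partial_y\phi_2$) gives
\[
\frac{d}{d\rho}P_z(\rho)=\phi_2(z,z;\rho)=\frac{1}{2\pi\sqrt{1-\rho^2}}\exp\Bigl\{-\frac{z^2}{1+\rho}\Bigr\}\ \ge 0
\]
for $\rho\in(-1,1)$. Hence $\mc R_z(\rho)=P_z(\rho)/\overline{\Phi}(z)^2$ is nondecreasing; continuity extends this to $\rho=1$. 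In particular $\mc R_z(\rho)\ge \mc R_z(0)=1$ for $\rho\ge0$, which is the lower bound in the second item.

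\emph{Small correlation.} For $0\le\rho\le1/2$, integrate Plackett's identity from $0$ to $\rho$ and use $P_z(0)=\overline\Phi(z)^2$:
\[
\mc R_z(\rho)-1=\frac{1}{\overline{\Phi}(z)^2}\int_0^\rho \frac{1}{2\pi\sqrt{1-r^2}}\, e^{-z^2/(1+r)}\,dr .
\]
We bound the pieces separately:
\begin{itemize}
\item For $r\le1/2$ we have $\sqrt{1-r^2}\ge\sqrt3/2$.
\item For $z\ge1$, the lower Mills bound gives $\overline\Phi(z)\ge \phi(z)/(2z)$, hence $\overline\Phi(z)^{-2}\le 8\pi z^2e^{z^2}$.
\item The exponents combine as $z^2-z^2/(1+r)=z^2r/(1+r)\le rz^2\le\rho z^2$.
\end{itemize}
Therefore the integrand, divided by $\overline\Phi(z)^2$, is at most $Cz^2e^{\rho z^2}$, and integrating over $[0,\rho]$ gives $\mc R_z(\rho)-1\le Cz^2\rho\, e^{\rho z^2}$.

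\emph{General correlation.} The preceding integral route fails as $\rho\to1$, because $(1-r^2)^{-1/2}$ blows up. We therefore bound the joint tail directly:
\[
P_z(\rho)\le \mb P(G_1+G_2>2z)=\overline\Phi\bigl(x\bigr),\qquad x:=z\sqrt{2/(1+\rho)}\ \ge z\ge 1 .
\]
The upper Mills bound then gives
\[
P_z(\rho)\le \frac{\phi(x)}{x}\le \frac{1}{\sqrt{2\pi}\,z}\,e^{-z^2/(1+\rho)} .
\]
Dividing by $\overline\Phi(z)^2\ge e^{-z^2}/(8\pi z^2)$ (the bound from the previous step) yields
\[
\mc R_z(\rho)\le Cz\exp\Bigl\{z^2-\frac{z^2}{1+\rho}\Bigr\}=Cz\exp\Bigl\{\frac{z^2\rho}{1+\rho}\Bigr\},
\]
uniformly in $\rho\in[0,1]$, including $\rho=1$.

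The only delicate point is this last item near $\rho=1$. Using the sum $G_1+G_2$ is what removes the $(1-\rho^2)^{-1/2}$ singularity. It also produces exactly one factor $1/z$ from the Mills ratio, which reduces the prefactor from $z^2$ to the claimed $z$.
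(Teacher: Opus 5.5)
Your proposal is correct and follows the paper's proof essentially step for step. For the second item you integrate Plackett's identity from $0$ to $\rho$ and apply a lower Mills-ratio bound; for the third you use the inclusion $\{G_1>z,G_2>z\}\subset\{G_1+G_2>2z\}$ together with an upper Mills-ratio bound. The only differences are cosmetic: you get monotonicity directly from the nonnegativity of the Plackett derivative, where the paper invokes Slepian's lemma (which rests on the same identity), and you make the Mills-ratio constants explicit.
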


\noindent \textbf{Proof of Lemma \ref{lem: mc R bound}.}
The monotonicity of $\mc R_z$ follows from Slepian's lemma. Assume \(z\ge1\), and write $p_z(\rho):=\mathbb P(G_1>z,G_2>z)$. We use the elementary fact that there exists a constant $c<1$ such that
\begin{align} \label{Gaussian tail bound}
   c^{-1} \cdot z^{-1}e^{-z^2/2} \geq  \overline\Phi(z)\ge c z^{-1}e^{-z^2/2},
\qquad z\ge1.
\end{align}

By Plackett's identity (see Equation (24) in \cite{au2024limit}),
\[
\frac{\partial}{\partial\rho}p_z(\rho)
=
\frac{1}{2\pi\sqrt{1-\rho^2}}
\exp\left\{-\frac{z^2}{1+\rho}\right\}.
\]
Since \(p_z(0)=\overline\Phi(z)^2\), it follows that
\[
\mathcal R_z(\rho)-1
=
\frac{1}{2\pi \cdot \overline\Phi(z)^2}
\int_0^\rho
\frac{1}{\sqrt{1-r^2}}
\exp\left\{-\frac{z^2}{1+r}\right\}\,dr.
\]
In particular, \(\mathcal R_z(\rho)\ge1\). 
Moreover, if \(0\le\rho\le1/2\), \eqref{Gaussian tail bound} gives
\[
\begin{aligned}
\mathcal R_z(\rho)-1
\le
Cz^2
\int_0^\rho
\exp\left\{
z^2-\frac{z^2}{1+r}
\right\}\,dr 
&=Cz^2 \cdot 
\int_0^\rho
\exp\left\{
\frac{r}{1+r} \cdot z^2
\right\}\,dr \\
&\le
Cz^2 \cdot \rho \cdot \exp\lb \rho z^2 \rb.
\end{aligned}
\]
For the final bound, observe that
\[
\{G_1>z,G_2>z\}
\subset
\left\{
\frac{G_1+G_2}{\sqrt{2(1+\rho)}}
>
z\sqrt{\frac{2}{1+\rho}}
\right\}.
\]
Thus, \eqref{Gaussian tail bound} gives
\begin{align*}
    \mc R_z(\rho) = \frac{p_z(\rho)}{p_z(0)} \leq \overline\Phi\left(z\sqrt{\frac{2}{1+\rho}} \, \right) \cdot \frac{1}{\overline{\Phi}(z)^2}
    &\leq C \cdot \frac{e^{z^2}}{z^{-2}} \cdot z^{-1} \cdot \sqrt{\frac{1+\rho}{2}} \exp \lb -\frac{z^2}{1+\rho} \rb \\
    &\leq Cz \cdot \exp \lb \frac{\rho z^2}{1+\rho} \rb.
\end{align*}
This completes the proof. $\hfill$ $\square$

\renewcommand{\bibname}{References}
\bibliographystyle{plainnat}
\bibliography{paper-ref}

@article{cai2014sparse,
    author = {Cai, T. Tony and Zhang, Anru},
    title = {Sparse Representation of a Polytope and Recovery of Sparse Signals and Low-Rank Matrices},
    journal = {IEEE Transactions on Information Theory},
    volume = {60},
    number = {1},
    pages = {122--132},
    year = {2014}
}

@article{JiangCai12,
    author = {Cai, T. and Jiang, T.},
    title = {Phase transition in limiting distributions of coherence of high-dimensional random matrices},
    journal = {J. Multivariate Anal. 107 24–39.},
    year =  {2012},
}

@article{supp,
  title={Supplement to ``Time uniform concentration bounds for iterative algorithms"},
  author={Pham, T. and Rinaldo, A. and Sarkar, P.},
journal={},
  year={2025}
}

@article{arratia1989two,
  title={Two moments suffice for {P}oisson approximations: the {C}hen-{S}tein method},
  author={Arratia, Richard and Goldstein, Larry and Gordon, Louis},
  journal={The Annals of Probability},
  pages={9--25},
  year={1989},
  publisher={JSTOR}
}

@inproceedings{feng2026principal,
  title={Principal minors of {G}aussian orthogonal ensemble},
  author={Feng, Renjie and Tian, Gang and Wei, Dongyi and Yao, Dong},
  booktitle={Forum Mathematicum},
  volume={38},
  number={4},
  pages={1187--1209},
  year={2026},
  organization={De Gruyter}
}

@article{cai2021asymptotic,
  title={Asymptotic analysis for extreme eigenvalues of principal minors of random matrices},
  author={Cai, T Tony and Jiang, Tiefeng and Li, Xiaoou},
  journal={The Annals of Applied Probability},
  volume={31},
  number={6},
  pages={2953--2990},
  year={2021},
  publisher={JSTOR}
}

@article{jiang2024largest,
  title={Largest eigenvalues of principal minors of deformed {G}aussian orthogonal ensembles and Wishart matrices},
  author={Jiang, Tiefeng and Qi, Yongcheng},
  journal={arXiv preprint arXiv:2410.15160},
  year={2024}
}

@article{iusem2010distances,
  title={Distances between closed convex cones: old and new results},
  author={Iusem, Alfredo and Seeger, Alberto},
  journal={J. Convex Anal},
  volume={17},
  number={3-4},
  pages={1033--1055},
  year={2010}
}

@article{au2024limit,
  title={A limit formula and a series expansion for the bivariate Normal tail probability},
  author={Au, Siu-Kui},
  journal={Statistics and Computing},
  volume={34},
  number={5},
  pages={152},
  year={2024},
  publisher={Springer}
}

@book{polyanskiy2025information,
  title={Information theory: From {C}oding to {L}earning},
  author={Polyanskiy, Yury and Wu, Yihong},
  year={2025},
  publisher={Cambridge {U}niversity {P}ress}
}

@book{anderson2010introduction,
  title={An {I}ntroduction to {R}andom {M}atrices},
  author={Anderson, Greg W and Guionnet, Alice and Zeitouni, Ofer},
  year={2010},
  publisher={Cambridge University Press}
}

@book{murphy2022probabilistic,
  title={Probabilistic {M}achine {L}earning: an {I}ntroduction},
  author={Murphy, Kevin P},
  year={2022},
  publisher={MIT press}
}

@book{BaiSilverstein2010,
  author    = {Bai, Zhidong and Silverstein, Jack W.},
  title     = {Spectral Analysis of Large Dimensional Random Matrices},
  edition   = {2},
  publisher = {Springer},
  address   = {New York},
  year      = {2010}
}

@article{BrycDemboJiang2006,
  author  = {Bryc, W{\l}odzimierz and Dembo, Amir and Jiang, Tiefeng},
  title   = {Spectral Measure of Large Random {Hankel}, {Markov} and {Toeplitz} Matrices},
  journal = {The Annals of Probability},
  year    = {2006},
  volume  = {34},
  number  = {1},
  pages   = {1--38}
}

@article{DiaconisEvans2001,
  author  = {Diaconis, Persi and Evans, Steven N.},
  title   = {Linear Functionals of Eigenvalues of Random Matrices},
  journal = {Transactions of the American Mathematical Society},
  year    = {2001},
  volume  = {353},
  number  = {7},
  pages   = {2615--2633}
}

@article{Bai1999,
  author  = {Bai, Zhidong},
  title   = {Methodologies in Spectral Analysis of Large Dimensional Random Matrices, a Review},
  journal = {Statistica Sinica},
  year    = {1999},
  volume  = {9},
  number  = {3},
  pages   = {611--677}
}

@article{candes2008restricted,
  title={The restricted isometry property and its implications for compressed sensing},
  author={Candes, Emmanuel J},
  journal={Comptes {R}endus {M}athematique},
  volume={346},
  number={9-10},
  pages={589--592},
  year={2008},
  publisher={Elsevier}
}

@article{moghaddam2005spectral,
  title={Spectral bounds for sparse {PCA}: Exact and greedy algorithms},
  author={Moghaddam, Baback and Weiss, Yair and Avidan, Shai},
  journal={Advances in {N}eural {I}nformation {P}rocessing {S}ystems},
  volume={18},
  year={2005}
}

@article{d2004direct,
  title={A direct formulation for sparse {PCA} using semidefinite programming},
  author={d'Aspremont, Alexandre and Ghaoui, Laurent and Jordan, Michael and Lanckriet, Gert},
  journal={Advances in {N}eural {I}nformation {P}rocessing {S}ystems},
  volume={17},
  year={2004}
}

@article{zhang2008sparsity,
  title={The sparsity and bias of the {LASSO} selection in high-dimensional linear regression},
  author={Zhang, Cun-Hui and Huang, Jian},
  journal={Annals of Statistics},
  volume={36},
  number={4},
  pages={1567--1594},
  year={2008}
}

@article{hu2023extreme,
  title={Extreme eigenvalues of principal minors of random matrices with moment conditions},
  author={Hu, Jianwei and Keita, Seydou and Fu, Kang},
  journal={Journal of the Korean Statistical Society},
  volume={52},
  number={3},
  pages={715--735},
  year={2023},
  publisher={Springer}
}

@book{boucheron2013concentration,
  author    = {Boucheron, St{\'e}phane and Lugosi, G{\'a}bor and
               Massart, Pascal},
  title     = {Concentration Inequalities:
               A Nonasymptotic Theory of Independence},
  publisher = {Oxford University Press},
  year      = {2013}
}

@article{sion1958general,
  title   = {On General Minimax Theorems},
  author  = {Sion, Maurice},
  journal = {Pacific Journal of Mathematics},
  volume  = {8},
  number  = {1},
  pages   = {171--176},
  year    = {1958}
}

@book{dembo2010large,
  author    = {Dembo, Amir and Zeitouni, Ofer},
  title     = {Large Deviations Techniques and Applications},
  edition   = {2},
  publisher = {Springer},
  address   = {Berlin, Heidelberg},
  year      = {2010}
}

@article{chen2013aizenman,
  author  = {Chen, Wei-Kuo},
  title   = {The {Aizenman--Sims--Starr} Scheme and {Parisi} Formula
             for Mixed {$p$}-Spin Spherical Models},
  journal = {Electronic Journal of Probability},
  volume  = {18},
  number  = {94},
  pages   = {1--14},
  year    = {2013}
}

@article{chen2017parisiSpherical,
  author  = {Chen, Wei-Kuo and Sen, Arnab},
  title   = {{Parisi} Formula, Disorder Chaos and Fluctuation for the
             Ground State Energy in the Spherical Mixed {$p$}-Spin Models},
  journal = {Communications in Mathematical Physics},
  volume  = {350},
  number  = {1},
  pages   = {129--173},
  year    = {2017}
}

@article{vanZwet1980,
  author  = {van Zwet, Willem R.},
  title   = {A Strong Law for Linear Functions of Order Statistics},
  journal = {The Annals of Probability},
  volume  = {8},
  number  = {5},
  pages   = {986--990},
  year    = {1980}
}

@article{newey1991uniform,
  author  = {Newey, Whitney K.},
  title   = {Uniform Convergence in Probability and Stochastic Equicontinuity},
  journal = {Econometrica},
  volume  = {59},
  number  = {4},
  pages   = {1161--1167},
  year    = {1991}
}

\end{document}